\newdimen\unit\newdimen\psep\newcount\nd\newcount\ndx\newbox\dotb\newbox\ptbox
\newdimen\dx\newdimen\dy\newdimen\dxx\newdimen\dyy\newdimen\hgt
\newdimen\xoff\newdimen\yoff
\newcommand\clap[1]{\hbox to 0pt{\hss{#1}\hss}}
\newcommand\vdisk[1]{{\font\dotf=cmr10 scaled #1\dotf.}}
\newcommand\varline[2]{\setbox\dotb\hbox{\vdisk{#1}}\xoff=-.5\wd\dotb
\wd\dotb=0pt\yoff=-.5\ht\dotb\psep=#2\ht\dotb}
\newcommand\varpt[1]{\setbox\ptbox\clap{\vdisk{#1}}\setbox\ptbox
\hbox{\raise-.5\ht\ptbox\box\ptbox}}
\newcommand\cpt{\copy\ptbox}
\newcommand\point[3]{\rlap{\kern#1\unit\raise#2\unit\hbox{#3}}}
\newcommand\setnd[4]{\dx=#3\unit\advance\dx-#1\unit\divide\dx by\psep
\dy=#4\unit\advance\dy-#2\unit\divide\dy by\psep \multiply\dx
by\dx\multiply\dy by\dy\advance\dx\dy\nd=1\advance\dx-1sp
\loop\ifnum\dx>0\advance\dx-\nd sp\advance\nd1\advance\dx-\nd
sp\repeat}
\newcommand\dl[4]{{\setnd{#1}{#2}{#3}{#4}\dline{#1}{#2}{#3}{#4}\nd}}
\newcommand\dline[5]{{\nd=#5\hgt=#2\unit\dx=#3\unit\advance\dx-#1\unit
\divide\dx by\nd\dy=#4\unit\advance\dy-#2\unit\divide\dy by\nd
\advance\hgt\yoff\rlap{\kern#1\unit\kern\xoff\loop\ifnum\nd>1\advance\nd-1
\advance\hgt\dy\kern\dx\raise\hgt\copy\dotb\repeat}}}
\newcommand\qellip[4]{{\setnd{0}{0}{#3}{#4}\dx=\unit\dy=0pt\raise\yoff\rlap{%
\kern#1\unit\kern\xoff\raise#2\unit\hbox{\loop\ifnum\dx>0\rlap{\kern#3\dx
\raise#4\dy\copy\dotb}\hgt=\dx\divide\hgt
by\nd\advance\dy\hgt\hgt=\dy \divide\hgt
by\nd\advance\dx-\hgt\repeat\rlap{\raise#4\dy\copy\dotb}}}}}
\newcommand\bez[6]{{\setnd{#1}{#2}{#3}{#4}\ndx=\nd\setnd{#3}{#4}{#5}{#6}
\ifnum\ndx>\nd\nd=\ndx\fi\dx=#3\unit\advance\dx-#1\unit\dy=#4\unit
\advance\dy-#2\unit\dxx=#5\unit\advance\dxx-#1\unit\dyy=#6\unit\advance
\dyy-#2\unit\advance\dxx-2\dx\advance\dyy-2\dy\divide\dxx
by\nd\divide\dyy
by\nd\advance\dx.25\dxx\advance\dy.25\dyy\divide\dx
by\nd\divide\dy by\nd \multiply\nd
by2\dx=100\dx\dy=100\dy\dxx=100\dxx\dyy=100\dyy\divide\dxx by\nd
\divide\dyy
by\nd\hgt=#2\unit\raise\yoff\rlap{\kern#1\unit\kern\xoff
\raise\hgt\copy\dotb\loop\ifnum\nd>0\advance\nd-1\advance\hgt0.01\dy
\kern0.01\dx\raise\hgt\copy\dotb\advance\dx\dxx\advance\dy\dyy\repeat}}}
\newcommand\ptu[3]{\point{#1}{#2}{\cpt\raise1ex\clap{$\scriptstyle{#3}$}}}
\newcommand\ptd[3]{\point{#1}{#2}{\cpt\raise-1.8ex\clap{$\scriptstyle{#3}$}}}
\newcommand\ptr[3]{\point{#1}{#2}{\cpt\raise-.4ex\rlap{$\ \scriptstyle{#3}$}}}
\newcommand\ptl[3]{\point{#1}{#2}{\cpt\raise-.4ex\llap{$\scriptstyle{#3}\ $}}}
\newcommand\ptlu[3]{\point{#1}{#2}{\raise.8ex\clap{$\scriptstyle{#3}$}}}
\newcommand\ptld[3]{\point{#1}{#2}{\raise-1.6ex\clap{$\scriptstyle{#3}$}}}
\newcommand\ptlr[3]{\point{#1}{#2}{\raise-.4ex\rlap{$\,\scriptstyle{#3}$}}}
\newcommand\ptll[3]{\point{#1}{#2}{\raise-.4ex\llap{$\scriptstyle{#3}\,$}}}
\newcommand\pt[2]{\point{#1}{#2}{\cpt}}
\newcommand\medline{\varline{800}{.5}}
\newcommand\thnline{\varline{400}{.6}}
\newtheorem{thm}{Theorem}
\newtheorem*{vBKlemma}{The van den Berg--Kesten Lemma}
\newtheorem{conj}{Conjecture}
\newtheorem{prob}{Problem}
\newtheorem{lemma}[thm]{Lemma}
\newtheorem{prop}[thm]{Proposition}
\newtheorem{cor}[thm]{Corollary}
\newtheorem{obs}[thm]{Observation}
\theoremstyle{definition}\newtheorem{rmk}{Remark}
\theoremstyle{definition}\newtheorem*{defn}{Definition}
\theoremstyle{definition}
\theoremstyle{definition}
\newcommand{\ds}{\displaystyle}
\newcommand{\ul}{\underline}
\def\Ddots{\mathinner{\mkern1mu\raise\p@
\vbox{\kern7\p@\hbox{.}}\mkern2mu
\raise4\p@\hbox{.}\mkern2mu\raise7\p@\hbox{.}\mkern1mu}}
\def\C{\mathcal{C}}
\def\E{\mathcal{E}}
\def\HH{\mathcal{H}}
\def\J{\mathcal{J}}
\def\M{\mathcal{M}}
\def\n{\mathcal{N}}
\def\P{\mathcal{P}}
\def\X{\mathcal{X}}
\def\Ex{\mathbb{E}}
\def\N{\mathbb{N}}
\def\Pr{\mathbb{P}}
\def\RR{\mathbb{R}}
\def\ZZ{\mathbb{Z}}
\def\le{\leqslant}
\def\ge{\geqslant}
\def\eps{\varepsilon}
\def\->{\rightarrow}
\def\<{\langle}
\def\>{\rangle}
\def\diam{\textup{diam}}
\def\grr{\textup{girth}}
\def\lg{\textup{long}}
\def\sht{\textup{short}}
\def\Bin{\textup{Bin}}
\def\dz{\textup{d}z}
\def\x{\mathbf{x}}
\def\y{\mathbf{y}}
\def\z{\mathbf{z}}
\def\a{\mathbf{a}}
\def\b{\mathbf{b}}
\def\c{\mathbf{c}}
\def\d{\mathbf{d}}
\def\k{\mathbf{k}}
\def\m{\mathbf{m}}
\def\r{\mathbf{r}}
\def\s{\mathbf{s}}
\def\v{\mathbf{v}}
\def\0{\mathbf{0}}
\def\1{\mathbf{1}}
\begin{document}

\title{The sharp threshold for bootstrap percolation in all dimensions}

\author{J\'ozsef Balogh}
\address{Department of Mathematics\\ University of Illinois\\ 1409 W. Green Street\\ Urbana, IL 61801\\ and\\ Department of Mathematics\\ University of California\\ San Diego, La Jolla, CA 92093}\email{jobal@math.uiuc.edu}

\author{B\'ela Bollob\'as}
\address{Trinity College\\ Cambridge CB2 1TQ\\ England\\ and \\ Department of Mathematical Sciences\\ The University of Memphis\\ Memphis, TN 38152, USA} \email{B.Bollobas@dpmms.cam.ac.uk}

\author{Hugo Duminil-Copin}
\address{D\'epartement de Math\'ematiques, Universit\'e de Gen\`eve, 2-4 rue du Li\`evre, 1211 Genv\`eve, Suisse} \email{hugo.duminil@unige.ch}

\author{Robert Morris} 
\address{IMPA, Estrada Dona Castorina 110, Jardim Bot\^anico, Rio de Janeiro, RJ, Brasil} \email{rob@impa.br}

\thanks{\emph{AMS 2010 subject classifications}: 60K35, 60C05}
\thanks{The first author was supported by NSF CAREER Grant DMS-0745185, UIUC Campus Research Board Grants 09072 and 08086, and OTKA Grant K76099. The second author was supported by NSF grants CNS-0721983, CCF-0728928 and DMS-0906634, and ARO grant W911NF-06-1-0076. The third was supported by ANR grant BLAN-3-134462 and the Swiss NSF, and the fourth by MCT grant PCI EV-8C, ERC Advanced grant DMMCA, and a Research Fellowship from Murray Edwards College, Cambridge}
\keywords{Bootstrap percolation, sharp threshold}

\begin{abstract}
In $r$-neighbour bootstrap percolation on a graph $G$, a (typically random) set $A$ of initially `infected' vertices spreads by infecting (at each time step) vertices with at least $r$ already-infected neighbours. This process may be viewed as a monotone version of the Glauber dynamics of the Ising model, and has been extensively studied on the $d$-dimensional grid $[n]^d$. The elements of the set $A$ are usually chosen independently, with some density $p$, and the main question is to determine $p_c([n]^d,r)$, the density at which percolation (infection of the entire vertex set) becomes likely.  

In this paper we prove, for every pair $d,r \in \N$ with $d \ge r \ge 2$, that
$$p_c\big( [n]^d,r \big) \; = \; \left( \frac{\lambda(d,r) + o(1)}{\log_{(r-1)} (n)} \right)^{d-r+1}$$
as $n \to \infty$, for some constant $\lambda(d,r) > 0$, and thus prove the existence of a sharp threshold for percolation in any (fixed) number of dimensions. We moreover determine $\lambda(d,r)$ for every $d \ge r \ge 2$.
\end{abstract}

\maketitle

\section{Introduction}\label{intro}

Cellular automata, which were introduced by von Neumann (see~\cite{vN}) after a suggestion of Ulam~\cite{Ulam}, are dynamical systems (defined on a graph $G$) whose update rule is homogeneous and local. In this paper we shall study a particular cellular automaton, known as $r$-neighbour bootstrap percolation, which may be thought of as a monotone version of the Glauber dynamics of the Ising model of ferromagnetism. We shall prove the existence of a sharp threshold for percolation in the $r$-neighbour model on the grid $[n]^d$, where $d \ge r \ge 2$ are fixed and $n \to \infty$, and moreover we shall determine the critical probability $p_c([n]^d,r)$ up to a factor of $1 + o(1)$. Our main theorem settles the major open question in bootstrap percolation. 

Given a (finite or infinite) graph $G$, and an integer $r \in \N = \{0,1,2,\ldots\}$, the $r$-neighbour bootstrap process on $G$ is defined as follows. Let $A$ be a set of initially `infected' vertices. At each time step, infect all of the vertices which have at least $r$ already-infected neighbours. To be precise, let $A_0 = A$, and define
$$A_{t+1} \; := \; A_t \cup \big\{v \in V(G) : |N(v) \cap A_t| \ge r \big\}$$
for each $t \in \N$, where $N(v)$ denotes the set of (nearest) neighbours of $v$ in $G$, and $|S|$ denotes the cardinality of a set $S$. We think of the set $A_t$ as the vertices which are infected at time $t$, and write $[A] = \bigcup_t A_t$ for the closure of $A$ under the process. We say that the set $A$ \emph{percolates} if the entire vertex set is eventually infected, i.e., if $[A] = V(G)$. 

The bootstrap process was introduced in 1979 by Chalupa, Leath and Reich~\cite{CLR} in the context of disordered magnetic systems, and has been studied extensively by mathematicians (see, for example,~\cite{AL,BB,BPP,CC,Hol,Sch}) and physicists~\cite{ALev,BQdS,GLBD,KGC}, as well as by computer scientists~\cite{DR,FELPS} and sociologists~\cite{Gran,Watts}, amongst others. Motivated by these physical models, we shall consider bootstrap percolation on the grid $[n]^d = \{1,\ldots,n\}^d$, and an initial set $A \subset V(G)$ whose elements are chosen independently at random, each with probability $p$. We shall write $\Pr_p$ for this distribution; throughout the paper, $A$ will always denote a random subset of $V(G)$ chosen according to $\Pr_p$. 

It is clear that the probability of percolation is increasing in $p$, and so we may define the critical probability, $p_c(G,r)$ as follows:
$$p_c(G,r) \; := \; \inf \Big\{ p \,:\, \Pr_p\big(A \textup{ percolates in the $r$-neighbour process on $G$}\big) \ge 1/2 \Big\}.$$
Our aim is to give sharp bounds on $p_c([n]^d,r)$, and to bound the size of the `critical window' in which the probability of percolation shifts from $o(1)$ to $1 - o(1)$. 

The first rigorous results on bootstrap percolation were obtained by van Enter~\cite{vE} and Schonmann~\cite{Sch}, on the infinite lattice $\ZZ^d$, and by Aizenman and Lebowitz~\cite{AL}, on the finite grid. In particular, Schonmann proved that $p_c(\ZZ^d,r) = 0$ if $r \le d$, and $p_c(\ZZ^d,r) = 1$ otherwise. The finite-volume behaviour (also known as `metastability') was studied in~\cite{AL,CC,CM}, and the threshold function $p_c([n]^d,r)$ was determined up to a constant factor, for all $d \ge r \ge 2$, by Cerf and Manzo~\cite{CM}. The first sharp threshold was determined by Holroyd~\cite{Hol}, in the case $d=r=2$, who proved that
$$p_c\big( [n]^2,2 \big) \; = \; \frac{\pi^2}{18\log n} \,+\, o\left( \frac{1}{\log n} \right)$$
as $n \to \infty$, and a corresponding result in three dimensions was recently proved in~\cite{d=r=3}. However, a longstanding open question (see, for example,~\cite{AL,BBsharp,CM,Hol}) was to determine whether there is sharp transition for $p_c([n]^d,r)$ (for fixed $d$ and $r$, as $n \to \infty$), and if so, whether there is a limiting constant. We resolve this question affirmatively, and determine the constant for every pair $(d,r)$.

In order to state our main result we first need to recall some functions from~\cite{d=r=3}. First, for each $k \in \N$, let
\begin{equation} \label{betadef} 
\beta_k(u) \; := \; \frac{1}{2}\left( 1 \, - \, (1-u)^k \, + \, \Big( 1 + (4u-2)(1-u)^k + (1-u)^{2k} \Big)^{1/2} \right),\nonumber
\end{equation}
so $\beta_k(u)^2 = \big(1 - (1-u)^k\big) \beta_k(u) + u(1-u)^k$, and let
\begin{equation} \label{gdef} 
g_k(z) \; := \; - \log\Big( \beta_k \big( 1 - e^{-z} \big) \Big).\nonumber
\end{equation}
Now, for each $2 \le r \le d \in \N$, let
\begin{equation} \label{lamdef}
\lambda(d,r) \; := \; \int_0^\infty g_{r-1}(z^{d-r+1}) \, \dz.
\end{equation}

The following theorem is the main result of this paper. Let $\log$ denote the natural logarithm, and let $\log_{(r)}$ denote an $r$-times iterated logarithm, $\log_{(r+1)}(n) = \log \big( \log_{(r)} (n) \big)$. 

\begin{thm}\label{sharp}
Let $d,r \in \N$, with $d \ge r \ge 2$. Then
$$p_c([n]^d,r) \; = \; \left( \frac{\lambda(d,r) + o(1)}{\log_{(r-1)} (n)} \right)^{d-r+1}$$
as $n \to \infty$. 
\end{thm}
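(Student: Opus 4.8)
The plan is to prove matching upper and lower bounds on $p_c([n]^d,r)$, both of which separately identify the constant $\lambda(d,r)$ given by the integral in~\eqref{lamdef}. The heuristic driving both halves is the following: starting from a random set $A$ of density $p$, internally spanned "droplets" grow by absorbing nearby infected sites; the probability that a droplet of "radius" $k$ grows to radius $k+1$ (in an appropriate rescaled geometry) is governed by the recursion $\beta_k(u)^2 = (1-(1-u)^k)\beta_k(u) + u(1-u)^k$, and summing the logarithms of the growth probabilities across the range of scales turns a product into the Riemann sum for $\int_0^\infty g_{r-1}(z^{d-r+1})\,\dz$. The exponent $d-r+1$ arises because, after a standard projection/slicing argument reducing the $d$-dimensional $r$-neighbour process to a $(d-r+1)$-dimensional-flavoured problem, a droplet of linear size $k$ in $d$ dimensions needs roughly $k^{d-r+1}$ "new" sites of the random set to continue its growth by one step.

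For the \textbf{upper bound} (percolation is likely when $p \ge ((\lambda(d,r)+\eps)/\log_{(r-1)}n)^{d-r+1}$), I would run a multi-scale growth argument. First, pass to the right length scale: set $p = (\ell/\log_{(r-1)} n)^{d-r+1}$ and observe that a "critical droplet" has diameter of order $\log_{(r-2)} n$ (for $r=2$, of order $1/p^{1/(d-1)}$ up to polylog corrections). Partition $[n]^d$ into blocks of this size; it suffices to show one block is internally spanned with probability bounded below by a negative power of its volume, since there are many disjoint blocks and, once a single critical droplet forms, a separate and easier "deterministic-ish" argument shows it grows to fill $[n]^d$ with high probability (this last step uses the Aizenman--Lebowitz-type lemma and the fact that empty regions of the relevant size are unlikely). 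Inside a block, I would build the droplet scale by scale: condition on a small seed being infected, then at each scale $k$ bound below the probability that the droplet extends using the $\beta_k$ recursion and independence across disjoint coordinate slabs; the FKG inequality is used to combine the growth events, and the product of the per-scale success probabilities, after taking logs, is at least $\exp(-(1+o(1))\int_0^\infty g_{r-1}(z^{d-r+1})\,\dz \cdot (\text{right power of }\log))$, which beats the reciprocal block volume precisely when $\ell > \lambda(d,r)$.

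For the \textbf{lower bound} (percolation is unlikely when $p \le ((\lambda(d,r)-\eps)/\log_{(r-1)}n)^{d-r+1}$), I would use the hierarchy method: if $A$ percolates then some droplet of intermediate ("critical") size is internally spanned, and the event of being internally spanned can be witnessed by a hierarchy — a tree of sub-droplets recording how the droplet was assembled by mergers and single-step growths. One bounds $\Pr_p(\text{droplet } D \text{ internally spanned})$ by summing over hierarchies; the dominant contribution comes from "long, thin" hierarchies with few branchings, where the cost is again a product of per-scale growth probabilities controlled from above by $\beta_k$, and the van den Berg--Kesten (BK) inequality handles the disjointness of the random sites used at different scales. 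The sum over the combinatorial structure of hierarchies contributes only a subexponential factor on the relevant scale, so the bound is $\exp(-(1-o(1))\lambda(d,r)\cdot(\text{power of }\log))$ per unit of the rescaled radius, which forces $p \ge ((\lambda(d,r)-\eps)/\log_{(r-1)}n)^{d-r+1}$ for percolation to have non-negligible probability. A union bound over all possible locations of the critical droplet then completes this direction.

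The main obstacle — and the crux of the whole paper — is the \textbf{lower bound}, specifically controlling the sum over hierarchies sharply enough to get the \emph{constant} $\lambda(d,r)$ rather than merely the right order of magnitude. Getting the order (as Cerf--Manzo did) only needs crude per-scale estimates, but pinning the constant requires: (i) showing that hierarchies with many branch vertices, or with "pods" covering a long range of scales, are negligible, so that the extremal hierarchy is essentially a path and the cost functional becomes literally the Riemann sum for the integral in~\eqref{lamdef}; (ii) a careful analysis of the $\beta_k$ recursion and the regularity of $g_{r-1}$ (monotonicity, the behaviour of $g_{r-1}(z)$ as $z \to 0$ and $z \to \infty$, ensuring the integral converges) so that the discrete sum $\sum_k g_{r-1}(\cdot)$ converges to $\int_0^\infty g_{r-1}(z^{d-r+1})\,dz$ with controlled error; and (iii) reconciling the $d$-dimensional geometry with the effectively lower-dimensional recursion via the slicing/projection argument, which is where the exponent $d-r+1$ and the rescaling $z \mapsto z^{d-r+1}$ genuinely enter and must be handled uniformly over the whole range of scales from the seed up to the critical size.
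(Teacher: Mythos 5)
Your plan for the case $r=2$ (Aizenman--Lebowitz to find a critically sized internally spanned rectangle, then hierarchies plus the van den Berg--Kesten inequality, with crossing probabilities controlled by the $\beta$ recursion and a $d$-dimensional version of Holroyd's variational analysis) is essentially the paper's base case, and the upper bound is in any case quoted from earlier work, so those parts are fine in outline. The genuine gap is in your lower bound for $r\ge 3$: you propose to run the hierarchy method directly at the top level, bounding $\Pr_p(D\text{ internally spanned})$ by a sum over hierarchies whose per-scale costs are "controlled from above by $\beta_k$". For $r\ge 3$ this cannot work as stated. The elementary growth event for an $r$-neighbour droplet --- a rectangle absorbing one more layer --- is itself a metastable event governed by an $(r-1)$-neighbour-type process inside that layer; its probability is not of the form $\beta^{\text{length}}$ and is only accessible after an entire lower-level analysis. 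Relatedly, the critical droplet scale is $\log_{(r-2)}n$ (iterated-logarithmic), not a power of $1/p$, so there is no analogue of Lemma~\ref{crossR} at the top level, and the constant $\lambda(d,r)$ simply does not emerge from a single-level hierarchy computation. Your heuristic mention of a "standard projection/slicing argument" is where the real work lives, and it is not standard.

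What is missing, concretely, is the induction on $r$ that the paper carries out. One partitions a candidate spanned rectangle into slabs of bounded thickness $k$ along its long direction and couples with the process in which everything outside a slab is declared infected; this lowers each threshold by one inside the slab, so the slab process is an $(r-1)$-type process --- but only if one works throughout with the generalized structures $C([n]^d\times[k]^\ell,r)$, whose boundary-modified thresholds make the family closed under this coupling. The hierarchy/variational analysis is then performed only at the bottom of the recursion, for the $2$-neighbour process on boxes thickened by $[k]^{r-2}$, and that is where $g_{r-1}$ and the exponent $d-r+1$ actually enter (via $\lambda(d+\ell,\ell+2)$ with $\ell=r-2$). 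Moreover, the slabs interact along the long direction, and "BK handles the disjointness" is not a substitute for the mechanism that controls this: one needs the Cerf--Cirillo graph-sequence lemma (Lemma~\ref{CClemma}), with its good/bad-edge dichotomy, and to verify its hypotheses one must strengthen the induction hypothesis to include not only $\Pr_p(\diam([A])\ge B\log n)\le n^{-(r-2)d-\eps}$ but also $\Ex_p\big(|\Gamma_G(A,B\log n,v)|\big)=o(1)$ (the good-edge condition), as in Theorem~\ref{genthm}. Without this recursive structure and the strengthened statement, your item (i)--(iii) program for extracting the constant from a sum over top-level hierarchies cannot be carried out.
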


\begin{rmk}
We shall moreover obtain explicit bounds on the probability that $A$ percolates outside the critical window. To be precise, for any $\eps > 0$ we shall prove that, if $p = (1-\eps)p_c$, then
$$\Pr_p\big( A \textup{ percolates} \big) \; \le \; n^{- d(r-2) - \delta}$$
for some $\delta = \delta(\eps) > 0$ (see Corollary~\ref{2r2cor} and  Theorem~\ref{genthm}). In~\cite{d=r=3} it was proved that if $p = (1+\eps)p_c$, then 
$$\Pr_p\big( A \textup{ percolates} \big) \; \ge \; 1 \,-\, \exp\left( - \frac{ n^{d-1} } { (\log n)^{3d} } \right).$$
\end{rmk}

Some special cases of Theorem~\ref{sharp} were known previously. Indeed, as noted above, the case $d=r=2$ was proved by Holroyd~\cite{Hol}, and the case $d = r = 3$, and the upper bound in Theorem~\ref{sharp}, were proved by Balogh, Bollob\'as and Morris~\cite{d=r=3}. Holroyd~\cite{Holddim} also proved a sharp threshold for a `modified' bootstrap percolation in an arbitrary (constant) number of dimensions. The modified model is much simpler to study, however, and the critical threshold differs from ours by a factor of about $d$. A weaker notion of sharpness was proven for $r = 2$ and all $d \ge 2$ by Balogh and Bollob\'as~\cite{BBsharp}, using a general result of Friedgut and Kalai~\cite{FK}. Their result implies that the critical window is of order $o(p_c)$, but not that the sequence $p_c([n]^d,2) (\log n)^{d-1}$ converges.

Although we cannot solve the integral~\eqref{lamdef} exactly, it is not too hard to prove that the function $\lambda(d,r)$ has some nice properties. In particular, $\lambda(d,r) < \infty$ for every $d \ge r \ge 2$, $\lambda(2,2) = \ds\frac{\pi^2}{18}$ (see~\cite{Hol} and also~\cite{HLR}), $\lambda(d,2) = \ds\frac{d-1}{2} + o(1)$, and 
$$\lambda(d,d) \; = \; \left( \frac{\pi^2}{6} + o(1) \right) \frac{1}{d}$$ as $d \to \infty$ (see~\cite{d=r=3}). 

The following table lists some approximate values of $\lambda(d,r)$ for $2 \le d \le 7$:

\pagebreak

\begin{center}
$d$
\end{center}
\smallskip
\begin{center}
\begin{tabular}{cc||c|c|c|c|c|c}
&& 2 & 3 & 4 & 5 & 6 & 7  \\ \hline \hline
& 2 \; & \; 0.5483 \; & \; 0.9924 \; & \; 1.4797 \; & \; 1.9764 \; & \; 2.4760 \; & \; 2.9768 \; \\
& 3 \; & - & 0.4039 & 0.8810 & 1.3864 & 1.8961 & 2.4078 \\
$r$ \; & 4 \; & - & - & 0.3198 & 0.8024 & 1.3162 & 1.8338 \\
& 5 \; & - & - & - & 0.2650 & 0.7431 & 1.2606 \\
& 6 \; & - & - & - & - & 0.2265 & 0.6963 \\
& 7 \; & - & - & - & - & - & 0.1979 %
\end{tabular}\\\ \\[+1ex]
\smallskip
$\lambda(d,r)$
\end{center}
\medskip

We remark finally that the bootstrap process has also been studied on several other graphs, such as high dimensional tori~\cite{BB,Maj,n^d}, infinite trees~\cite{BPP,BS,FS}, the random regular graph~\cite{BP,Svante}, `locally tree-like' regular graphs~\cite{Maj}, and the Erd\H{o}s-R\'enyi random graph $G_{n,p}$~\cite{JLTV}. Some of the techniques from these papers (and those mentioned earlier) have been used to prove results about the low-temperature Glauber dynamics of the Ising model~\cite{CM2,FSS,Me}. Some very recent results on bootstrap percolation in two dimensions can be found in~\cite{DCH,GHM}, see Section~\ref{Qsec} for more details.

We shall prove Theorem~\ref{sharp} by induction on $r$, and in order for the proof to work we shall need to strengthen the induction hypothesis. A \emph{bootstrap structure} is a graph $G$, together with a function $r: V(G) \to \N$ which assigns a `threshold' to each vertex of $G$. Bootstrap percolation on such a structure is then defined in the obvious way, by setting $A_0 = A$ and
$$A_{t+1} \; := \; A_t \: \cup \: \big\{ v \in V(G) \: : \: |N(v) \cap A_t| \ge r(v) \big\}$$
for each $t \ge 0$, and letting $[A] = \bigcup_t A_t$. 

The following family of bootstrap structures, which we call $C([n]^d \times [k]^\ell,r)$, will be a crucial tool in our proof. We think of $[n]^d \times [k]^\ell$ as a box $[n]^d$ of `thickness' $[k]^\ell$.
\begin{defn}
Let $n,d,k,\ell,r \in \N$. Then $C([n]^d \times [k]^\ell,r)$ is the bootstrap structure such that
\begin{enumerate}
\item[$(a)$] the vertex set is $[n]^d \times [k]^\ell$,\\[-2ex]
\item[$(b)$] the edge set is induced by $\ZZ^{d+\ell}$,\\[-2ex]
\item[$(c)$] $v = (a_1,\ldots,a_d,b_1,\ldots,b_\ell)$ has threshold $r + |\{j \in [\ell] : b_j \not\in \{1,k\}\}|$.
\end{enumerate}
\end{defn}
\noindent Let $B([n]^d,r)$ denote the bootstrap structure on $[n]^d$ in which every vertex has threshold $r \in \N$, and note that $B([n]^d,r) = C([n]^d \times [k]^0,r)$.

We shall in fact determine a sharp threshold for percolation on $C([n]^d \times [k]^\ell,r)$ for every $d \ge r \ge 2$ and every $\ell \in \N$, when $k = k(n) \to \infty$ sufficiently slowly (see Theorem~\ref{genthm}, below, and Theorem~5 of~\cite{d=r=3}). The main difficulty will lie in proving the result below, which implies the lower bound in the case $r = 2$. We define the \emph{diameter} $\diam(S)$ of a set $S \subset \ZZ^{d+\ell}$ to be
$$\diam(S) \; := \; \sup \Big\{ \| x - y \|_\infty + 1 : x,y \in S \textup{ and } (x \leftrightarrow y)_S \Big\},$$
where we write $(x \leftrightarrow y)_S$ to indicate that there exists a path from $x$ to $y$ (in the graph $\ZZ^{d+\ell}$) using only vertices of $S$. 

Recall that $[A]$ denotes the closure of $A$ under the bootstrap process. The following theorem will be the base case of our proof by induction. 

\begin{thm}\label{r=2}
Let $d,\ell \in \N$, with $d \ge 2$, and let $\eps > 0$. Let $B > 0$ and $k \ge k_0(B) \in \N$ be sufficiently large, and let the elements of $A \subset C([n]^d \times [k]^\ell,2)$ be chosen independently at random with probability $p$, where
$$p \; = \; p(n) \; \le \; \left( \ds\frac{\lambda(d+\ell,\ell+2) - \eps}{\log n} \right)^{d-1}.$$
Then
 $$\Pr_p\big( \diam([A]) \ge B \log n \big) \; \to \; 0$$
 as $n \to \infty$.
 \end{thm}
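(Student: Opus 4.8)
The plan is to bound the probability that $[A]$ contains a connected ``blob'' of diameter at least $B\log n$ by a union bound over the possible locations and shapes of such a blob, and to show that for each fixed location the probability of a large internally-spanned region decays fast enough to beat the number of locations (which is polynomial in $n$). The key notion is that of an \emph{internally spanned} set: following Aizenman--Lebowitz, if $[A]$ has a component of diameter $\ge B\log n$, then for every $m$ with $1 \le m \le B\log n$ there is a ``rectangle'' (a box in the structure $C([n]^d\times[k]^\ell,2)$) $R$ with $\diam(R)\in[m,2m]$ (roughly) that is internally spanned, meaning $R\subseteq [A\cap R]$. Since the number of boxes of a given size whose diameter is $\Theta(m)$ and which intersect a fixed point is $O(m^{O(1)})$, and the number of choices of basepoint is $O(n^{d+\ell})$, it suffices to show that the probability a fixed box $R$ of diameter $\Theta(\log n)$ is internally spanned is at most $n^{-(d+\ell) - \Omega(1)}$.

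To estimate the probability that a box $R$ is internally spanned I would run the standard ``hierarchy'' / iterated-halving argument. In $C([n]^d\times[k]^\ell,2)$, a thin direction of length $\le k$ behaves essentially like a one-dimensional direction requiring only $2$ infected neighbours to propagate (because the threshold there is $2$, not $3$), while the ``interior'' of a thick coordinate raises the threshold; the upshot is that, after projecting away the bounded $[k]^\ell$ factor, one is really analysing $2$-neighbour bootstrap on a $(d)$-dimensional slab of bounded thickness $k$. The crucial input is the link between growth of an internally spanned region in $C([n]^{d}\times[k]^\ell,2)$ and the function $g_{\ell+1}$ (equivalently $\beta_{\ell+1}$): when a long box of cross-section roughly $z/\log n$ in each thin-ish coordinate tries to grow by one step in the long direction, the probability it succeeds is governed by $\beta_{\ell+1}(1-e^{-z})$-type expressions, and taking $-\log$ and integrating over $z$ produces exactly $\lambda(d+\ell,\ell+2)=\int_0^\infty g_{\ell+1}(z^{d-1})\,\dz$. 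Concretely I would set up a family of nested boxes $R_1\subset R_2\subset\cdots\subset R$ with geometrically increasing long dimension, use the van den Berg--Kesten inequality to make the successive growth events (disjointly) independent, and show the product of the growth probabilities over a box of diameter $B\log n$ is at most $\exp\big(-(1-o(1))(d-1)\lambda(d+\ell,\ell+2)^{-?}\,\cdot\,\big)$ --- more precisely, the exponent comes out to roughly $-(d-1)\big(\tfrac{\lambda-\eps'}{p^{1/(d-1)}}\big)$-ish; with $p \le \big((\lambda(d+\ell,\ell+2)-\eps)/\log n\big)^{d-1}$ this exponent is at least $(1 + \Omega(\eps))(d+\ell)\log n$, giving the required $n^{-(d+\ell)-\Omega(1)}$ bound.

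I would organise the write-up as follows. First, a deterministic lemma (Aizenman--Lebowitz type) reducing ``$\diam([A])\ge B\log n$'' to the existence of an internally spanned box of each intermediate scale; this is standard and short. Second, the definition of a hierarchy for an internally spanned box in $C([n]^d\times[k]^\ell,2)$, together with the bound on the number of hierarchies of a given size (polynomial in the diameter). Third, the core probabilistic estimate: for a single ``seed'' event and a single ``growth'' step, bound the probability in terms of $\beta_{\ell+1}$, using that within a thin layer the process is a $2$-neighbour process and the relevant ``double gap'' recursion is exactly the one solved by $\beta_{\ell+1}$. Fourth, multiply along the hierarchy via vBK and integrate/Riemann-sum to recognise $\lambda(d+\ell,\ell+2)$, obtaining the per-box bound; here one needs $k \ge k_0(B)$ large enough that the bounded-thickness approximation introduces only an $o(1)$ error in the exponent, and $B$ large enough (or rather, the statement is uniform in $B$ once $k$ is large, since bigger $B$ only makes the event harder). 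Fifth, the union bound over $O(n^{d+\ell}\cdot\mathrm{poly}(\log n))$ boxes and hierarchies. The main obstacle, and where essentially all the work lies, is the third and fourth steps: getting the growth probabilities bounded by the right $\beta_{\ell+1}$-expression with a genuinely sharp constant (not just up to a multiplicative constant, which is all Cerf--Manzo needed), uniformly over the shape of the box and robustly against the $[k]^\ell$ thickening and against ``non-rectangular'' internally spanned regions. Controlling those error terms --- in particular showing that a $2$-neighbour-internally-spanned set in a slab can, for the purposes of the upper bound, be replaced by a box with only negligible loss, and that the thickness $k$ can be taken large but finite --- is the delicate part; the rest is bookkeeping analogous to~\cite{Hol} and~\cite{d=r=3}.
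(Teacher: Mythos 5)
Your outline follows the same route as the paper (Aizenman--Lebowitz reduction to an internally spanned box at the critical scale $B/p^{1/(d-1)}$, Holroyd-style hierarchies combined with the van den Berg--Kesten inequality, crossing/growth probabilities expressed through $\beta_{\ell+1}$ and $g_{\ell+1}$, and a Riemann-sum/variational step producing $\lambda(d+\ell,\ell+2)$, followed by a union bound). However, the quantitative heart of the argument is misstated in a way that would make the proof fail as written. The per-box exponent is \emph{not} ``roughly $(d-1)(\lambda-\eps')/p^{1/(d-1)}$'': the correct sharp bound, which is what the paper proves (Theorem~\ref{2r2tech}), is
$$\Pr_p\big( R \in \< A \cap R\> \big) \;\le\; \exp\left( - \frac{d\,\lambda(d+\ell,\ell+2) - \eps'}{p^{1/(d-1)}} \right),$$
with the factor $d$ (not $d-1$). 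That factor is exactly what makes the union bound close: the number of candidate boxes is $O\big(n^d \,\mathrm{poly}(\log n)\big)$ (the $[k]^\ell$ thickness contributes only a constant, and rectangles span it by definition), so one needs a per-box bound of order $n^{-d-\delta}$; since $p^{1/(d-1)} \le (\lambda - \eps)/\log n$, the exponent above is at least $\big(d\lambda - \eps'\big)\log n/(\lambda-\eps) > d\log n$, with margin roughly $d\eps/\lambda$. With your $(d-1)$ constant the per-box probability is only about $n^{-(d-1)}$, which does not beat the $n^d$ boxes; and your subsequent claim that your exponent is ``at least $(1+\Omega(\eps))(d+\ell)\log n$'' neither follows from your own formula nor is attainable (for $\ell\ge 1$ and small $\eps$ one cannot reach exponent $d+\ell$ at this value of $p$) --- nor is it needed, because the basepoint count is $O(n^d)$, not $O(n^{d+\ell})$.

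The missing idea behind the factor $d$ is the variational statement that an internally spanned critical droplet must grow in \emph{all} $d$ unbounded directions, and that the cheapest growth trajectory hugs the main diagonal: in the paper this is the line-integral machinery $W_{g_{\ell+1}}$, the ``pod'' lemma (Proposition~\ref{pod}) converting a hierarchy into a single large integral, and Proposition~\ref{minW}, which gives $W_{g_{\ell+1}}(\a,\b) \ge d\int g_{\ell+1}(z^{d-1})\,\dz$ up to an error term. Your sketch gestures at ``integrate/Riemann-sum to recognise $\lambda$'' but does not contain this step, and without it one only recovers growth in a single direction, i.e.\ precisely the insufficient $(d-1)$- or $1$-directional bookkeeping above. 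Similarly, the sharp crossing estimate (the analogue of Lemma~\ref{crossR}, proved in the paper by slicing a long box into blocks of bounded width, passing to the auxiliary structure $C([\m]\times[\k],1)$, and using the $L^{\pm}$-blocker argument, uniformly in a bounded number of conditioned-on infected sites) is acknowledged as ``the delicate part'' but not supplied; as it is the input from which both the growth bound and the seed bound are derived, the proposal as it stands is a correct plan of attack with the two decisive estimates missing and the one explicitly stated constant wrong.
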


The rest of the paper is organised as follows. In Sections~\ref{tools} and~\ref{hiersec} we review some basic definitions and tools from~\cite{d=r=3}, and in Section~\ref{sketchsec} we give a brief sketch of the proof. In Section~\ref{crosssec} we bound the probability that a rectangle is `crossed' by $A$, in Section~\ref{analsec} we present some basic analytic tools, and in Section~\ref{T2sec} we deduce Theorem~\ref{r=2} using ideas from Holroyd's proof of the case $d=r=2$. In Section~\ref{CCsec} we recall the method of Cerf and Cirillo~\cite{CC}, and in Section~\ref{T1PfSec} we deduce Theorem~\ref{sharp}. Finally, in Section~\ref{Qsec}, we state some open problems and conjectures.

\section{Tools and definitions}\label{tools}

In this section we recall various tools and definitions from~\cite{d=r=3} which we shall use throughout the paper. Define a \emph{rectangle} $R$ in $[n]^d \times [k]^\ell = \{1,\ldots,n\}^d \times \{1,\ldots,k\}^\ell$ to be a set
$$\big[ (a_1,\ldots,a_d),(b_1,\ldots,b_d) \big] \; := \; \big\{ (x_1,\ldots,x_d,y_1,\ldots,y_\ell) \,:\, x_i \in [a_i,b_i], y_i \in [k] \big\},$$
where $[a,b] = \{a,a+1,\ldots,b\}$ and $[b] = [1,b]$. We also identify these with rectangles in $[n]^d = [n]^d \times [k]^0$ in the obvious way. The \emph{dimensions} of $R$ is the vector
$$\dim(R) \; := \; (b_1 - a_1 + 1, \ldots, b_d - a_d + 1) \in \N^d$$
and the \emph{semi-perimeter} of $R$ is
$$\phi(R) \; := \; \sum_i \big( b_i - a_i + 1 \big).$$
The longest side-length of $R$ is $\lg(R) := \max\{b_i - a_i + 1\}$, and the shortest side-length of $R$ is $\sht(R) := \min\{b_i - a_i + 1\}$.

A \emph{component} of a set $S \subset \ZZ^d$ is a maximal connected set in the graph $\ZZ^d[S]$ (the subgraph of $\ZZ^d$ induced by $S$). Given a subset $S \subset [n]^d \times [k]^\ell$, let $R(S)$ denote the smallest rectangle such that $S \subset R(S)$.

We next define the \emph{span} $\< A \>$ of a set $A$ in $C([n]^d \times [k]^\ell,2)$. The definition we give here is slightly different from that in~\cite{d=r=3}, but has many of the same properties (see Section~\ref{hiersec}). This definition simplifies the proof in Section~\ref{T1PfSec}.

\begin{defn}
Let $n,k \in \N$ and $A \subset C([n]^d \times [k]^\ell,2)$. Let $C_1, \ldots, C_m$ denote the collection of connected components in $[A]$. The \emph{span of $A$} is defined to be the following collection of rectangles:
$$\< A \> \; := \; \big\{ R(C_1), \ldots, R(C_m) \big\}.$$
If $[A]$ is connected (i.e., $m = 1$), then we say that $A$ \emph{spans} the rectangle $R(C_1)$. If $R \in \< A \cap R \>$, then we say $A$ \emph{internally spans} $R$.
\end{defn}

If $\< A \> = \{R\}$, i.e., $A$ spans $R$, then we shall write $\< A \> = R$. If $S$ is a set and $S \subset [A \cap S]$ then we shall say that $A$ \emph{internally fills} $S$. 

Given a set $S$, and $p \in [0,1]$, say that $A \sim \Bin(S,p)$ if the elements of $A \subset S$ are chosen independently at random with probability $p$. If $R$ is a rectangle in $C([n]^d \times [k]^\ell,r)$, then let
$$P_p(R) \; := \; \Pr_p\big(R \in \< A \cap R \> \big) \; = \; \Pr \big( R \in \< A \> \: | \:  A \sim \Bin(R,p) \big),$$
i.e., the probability that $A$ internally spans $R$.

A set is said to be \emph{occupied} if it is non-empty (i.e., contains some element of $A$), and it is said to be \emph{full} if every site is in $A$.
We shall use throughout the paper the notation
$$q \; := \; -\log(1 - p)$$
as in \cite{Hol}. Note that $p \sim q$ for small $p$. The advantage of this notation is the fact that
\begin{equation} \label{qbeta} \beta_k\big(1 - (1-p)^n\big) \; = \; e^{-g_k(nq)}.\end{equation}
Let $u(x) = 1 - e^{-qx}$ for any $x \in \RR$, and note that this is the probability that a set $S$ of size $x$ is empty (i.e., not occupied) under $\Pr_p$. Given $\x \in \RR^d$ and $j \in [d]$, we define 
$$u_j(\x) \; := \; u\Big( \prod_{i \neq j} x_i \Big),$$
and if $R \subset [n]^d \times [k]^\ell$ is a rectangle, then let $u_j(R) = u_j\big( \dim(R) \big)$.

We next recall the concept of \emph{disjoint occurrence} of events, and the van den Berg-Kesten Lemma~\cite{vBK}, which utilizes it. An event $E$ defined on subsets of $[N]$ is \emph{increasing} if $(S \subset T) \wedge E(S)$ implies $E(T)$. In the setting of bootstrap percolation on a graph $G$, two increasing events $E$ and $F$ \emph{occur disjointly} if there exist disjoint sets $S,T \subset V(G)$ such that the infected sites in $S$ imply that $E$ occurs, and the infected sites in $T$ imply that $F$ occurs. (We call $S$ and $T$ \emph{witness sets} for $E$ and $F$.) We write $E \circ F$ for the event that $E$ and $F$ occur disjointly.

\begin{vBKlemma}
Let $E$ and $F$ be any two increasing events defined in terms of the infected sites $A \subset V(G)$, and let $p \in (0,1)$. Then
$$\Pr_p(E \circ F) \; \le \; \Pr_p(E)\,\Pr_p(F).$$
\end{vBKlemma}

We remark here, for ease of reference, that there will be various constants which appear in the proof of Theorem~\ref{r=2}, which will depend on each other, but \emph{not} on $p$. These will be chosen in the order first $B$ (for `big'), then $\delta$, $k$, $Z$ (for `seed'), and finally $T$ (for `tiny'), and will satisfy
$$T \ll Z \ll \delta \ll 1 \ll B \ll k.$$
Each of these constants also depends on $d$, $\ell$ and $\eps$, which are fixed at the start of the proof.

\section{Sketch of the proof}\label{sketchsec}

To aid the reader's understanding, we shall give a brief outline of the proof of Theorem~\ref{sharp}; we begin with the base case of our induction on $r$, Theorem~\ref{r=2}. Let $G = C([n]^d \times [k]^\ell,2)$ and let $A \subset G$ be a random set, chosen with density
$$p \; = \; p(n) \; \le \; \left( \ds\frac{\lambda(d+\ell,\ell+2) - \eps}{\log n} \right)^{d-1}.$$
The first step is to apply a lemma introduced by Aizenman and Lebowitz~\cite{AL}, which says that if $\diam([A]) \ge B \log n$, then there exists an internally spanned rectangle $R$ in $G$ with $\frac{B \log n}{2} \le \lg (R) \le B \log n$. We shall show that $\Pr_p\big(R \in \< A \cap R \> \big)$, the probability that $R$ is internally spanned, is at most 
$$\exp\left( - \frac{d \lambda(d+\ell,\ell+2) - \eps}{p^{1/(d-1)}} \right) \; \le \; \frac{1}{n^{d+\eps}},$$
where the last inequality follows from our choice of $p$. This implies (by the union bound) that $\Pr_p\big( \diam([A]) \ge B \log n \big) = o(1)$.

To bound the probability that $R$ is internally spanned, we use the `hierarchy method', which was introduced by Holroyd~\cite{Hol}, and then adapted for our purposes in~\cite{d=r=3}. To be precise, we show that if $A \cap R$ spans $R$, then there is a `good and satisfied hierarchy' for $R$ (see Section~\ref{hiersec}, below), and so $\Pr_p\big(R \in \< A \cap R \> \big)$ is bounded by the expected number of such hierarchies. A hierarchy is essentially a way of breaking up the event $R \in \< A \cap R \>$ into a bounded number of \emph{disjoint} (and relatively simple) events, and so, by the van den Berg-Kesten inequality, the probability that a good hierarchy is satisfied is bounded by the product of the probabilities of these events (see Lemma~\ref{basic}). Moreover, we shall show that the number of good hierarchies is small (see Lemma~\ref{fewhiers}), and so it suffices to give a uniform bound on the probability that a good hierarchy is satisfied. 

To prove such a bound, the key step is to determine precisely the probability of `crossing a rectangle' $R'$ (see Section~\ref{crosssec}), that is, the probability that there is a path in $[A']$ across $R'$ in direction 1, where $A' = (A \cap R') \,\cup\, \{\x \,:\, x_1 \le a_1 - 1\}$. This is the most technical part of the paper, and we give a proof quite different from (and somewhat simpler than) that of the corresponding statement in~\cite{d=r=3}. One of the key steps is to partition $R'$ into pieces $S_j$ of bounded width, and study the probability of crossing $S_j$, under the coupling in which all elements of $R' \setminus S_j$ are already infected. In particular, our method allows us to avoid the use of Reimer's Theorem, which was a crucial tool in~\cite{d=r=3}. The required bound then follows (see the proof of Theorem~\ref{2r2tech} in Section~\ref{T2sec}) using some basic analysis, which generalizes results from~\cite{Hol} to higher dimensions (see Section~\ref{analsec}). 

Having proved Theorem~\ref{r=2}, we then deduce Theorem~\ref{sharp} using the method of Cerf and Cirillo~\cite{CC}, once again suitably generalized (see Section~\ref{CCsec}). Let $G = C([n]^d \times [k]^{\ell},r)$, and let $A \subset G$ be chosen randomly with density 
$$p \,=\, p(n) \,\le\, \left( \ds\frac{\lambda(d+\ell,\ell+r) - \eps}{\log_{(r-1)} n} \right)^{d-r+1}.$$ 
The first step is to observe that if $A$ internally spans $G$, then there exists a connected set $X$ with $X \subset [A \cap X]$ and $\log n \le \diam(X) = m \le 2\log n$ (see Lemma~\ref{smallcompt}). We consider the smallest cuboid $R$ containing $X$, and partition it into sub-cuboids $L_j$ of bounded width (along its longest edge). 

Now, we perform the bootstrap process in each $L_j$, under the coupling in which every vertex of $R \setminus L_j$ is already infected; under this coupling, the bootstrap structure on $L_j$ becomes isomorphic to $C([m]^{d-1} \times [k]^{\ell+1},r-1)$, and so we can apply the induction hypothesis. (In fact the situation is more complicated than this (see Theorem~\ref{genthm}), but we leave the details until Section~\ref{T1PfSec}.) By counting the expected number of minimal paths across $R$ (see Lemma~\ref{CClemma}), we deduce that the probability that $R$ is crossed by $[A \cap R]$ is at most $n^{-d-\eps}$, and hence with high probability there is no such connected set $X$ in $G$, in which case the set $A$ does not percolate, as required.

\section{Hierarchies}\label{hiersec}

In this section we shall recall (from~\cite{d=r=3} and~\cite{Hol}) the definition and some basic properties of a \emph{hierarchy} of a rectangle $R$. All of the results in this section were first proved by Holroyd~\cite{Hol} for $[n]^2$, and generalized to $C([n]^d \times [k]^\ell,r)$ in~\cite{d=r=3}. We refer the reader to those papers for detailed proofs, and note that although our definition of $\< A \>$ is slightly different from that in~\cite{d=r=3}, the proofs all work in exactly the same way.

We begin by defining a hierarchy of a rectangle in $C([n]^d \times [k]^\ell,2)$. If $G$ is an oriented graph, then let $N_G^\->(u) := \{v \in V(G) : u \to v\}$. 

\begin{defn}
Let $R$ be a rectangle in $C([n]^d \times [k]^\ell,2)$. A \emph{hierarchy} $\HH$ of $R$ is an oriented rooted tree $G_\HH$, with all edges oriented away from the root (`downwards'), together with a collection of rectangles $\{R_u : u \in V(G_\HH)\}$, $R_u \subset C([n]^d \times [k]^\ell,2)$, one for each vertex of $G_\HH$, satisfying the following criteria:
\begin{enumerate}
\item[$(a)$] The root of $G_\HH$ corresponds to $R$.
\item[$(b)$] Each vertex has at most $\ell + 2$ neighbours below it.
\item[$(c)$] If $u \to v$ in $G_\HH$ then $R_u \supset R_v$.
\item[$(d)$] If $N_{G_\HH}^\->(u) = \{v_1,\ldots,v_t\}$ and $t \ge 2$, then $\< R_{v_1} \cup \ldots \cup R_{v_t} \> = R_u$.
\end{enumerate}
\end{defn}

A vertex $u$ with $N_{G_\HH}^\->(u)=\emptyset$ is called a \emph{seed}. Given two rectangles $S \subset R$, we write $D(S,R)$ for the event (depending on the set $A \subset R$) that
$$R \: \in \: \< (A \cup S) \cap R \>,$$
i.e., the event that $R$ is internally spanned by $A \cup S$. Note that the event $D(S,R)$ depends only on the set $A \cap (R \setminus S)$, and let
$$P_p(S,R) \; := \; \Pr_p\big( D(S,R) \big).$$ 

We say a hierarchy \emph{occurs} (or is \emph{satisfied} by a set $A \subset R$) if the following events all occur \emph{disjointly}.
\begin{enumerate}
\item[$(e)$] If $u$ is a seed, then $R_u$ is internally spanned by $A$.
\item[$(f)$] If $(u,v)$ is such that $N_{G_\HH}^\->(u) = \{v\}$, then $D(R_v,R_u)$ holds.
\end{enumerate}
A hierarchy is \emph{good} for $(\hat{T},\hat{Z}) \in \RR^2$ if it satisfies the following.
\begin{enumerate}
\item[$(g)$] If $N_{G_\HH}^\->(u) = \{v\}$ and $|N_{G_\HH}^\->(v)| = 1$ then $\hat{T} \le \phi(R_u) - \phi(R_v) \le 2\hat{T}$.
\item[$(h)$] If $N_{G_\HH}^\->(u) = \{v\}$ and $|N_{G_\HH}^\->(v)| \neq 1$ then $\phi(R_u) - \phi(R_v) \le 2\hat{T}$.
\item[$(i)$] If $|N_{G_\HH}^\->(u)| \ge 2$ and $v \in N_{G_\HH}^\->(u)$, then $\phi(R_u) - \phi(R_v) \ge \hat{T}$.
\item[$(j)$] $u$ is a seed if, and only if, $\sht(R_u) \le \hat{Z}$.
\end{enumerate}
In our application we shall take $\hat{T} = T/p^{1/(d-1)}$ and $\hat{Z} = Z/p^{1/(d-1)}$ for some (small) constants $T,Z > 0$.  

The definition above is useful because of the following lemma, which says that if $A$ internally spans $R$, then there is a good hierarchy which is satisfied by $A$. Our definition of the span $\<A\>$ of the set $A$ is motivated by the proof of this lemma (see~\cite{d=r=3} for more details).

\begin{lemma}[Lemma~18 of~\cite{d=r=3}]\label{hierexists}
Let $A \subset C([n]^d \times [k]^\ell,2)$, let $\hat{T},\hat{Z} > 0$, and let $R \subset C([n]^d \times [k]^\ell,2)$ be a rectangle. Suppose that $A$ internally spans $R$. Then there exists a good (for $(\hat{T},\hat{Z})$) and satisfied hierarchy of $R$.
\end{lemma}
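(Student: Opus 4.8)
\textbf{Proof proposal for Lemma~\ref{hierexists}.}

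The plan is to construct the hierarchy by a greedy top-down procedure, starting from the root rectangle $R$ and repeatedly ``explaining'' how each rectangle in the tree is spanned in terms of a bounded number of smaller spanned rectangles, stopping when the rectangles are small enough to be declared seeds. First I would record the key structural fact, proved in~\cite{Hol,d=r=3}: if a set $A$ spans a rectangle $S$ (in the sense that $[A]$ is connected with $R([A]) = S$), then either $\sht(S)$ is small — in which case $S$ will be a seed — or there is a partition of the spanning process witnessed by at most $\ell+2$ disjoint spanned sub-rectangles whose span is $S$, and moreover one can always find such a decomposition where the sub-rectangles have semi-perimeter not too much smaller than $\phi(S)$. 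This is where the ``$\ell+2$'' in condition $(b)$ comes from: at any stage of the closure process, the number of ``directions'' in which a connected infected region can grow and merge is controlled by the dimension $d+\ell$ together with the threshold structure of $C([n]^d\times[k]^\ell,2)$, which caps the branching.

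The construction then proceeds as follows. Maintain a current frontier of rectangles to be processed, initially $\{R\}$. For a rectangle $R_u$ in the frontier: if $\sht(R_u) \le \hat Z$, declare $u$ a seed and stop developing this branch — this gives condition $(j)$ and event $(e)$, since $A\cap R_u$ internally spans $R_u$ by our choice of the decomposition. Otherwise, look at the minimal sub-rectangles spanned inside $R_u$. If there is essentially one way forward (a single sub-rectangle $R_v$ with $R_v\subsetneq R_u$ that already captures all of $[A\cap R_u]$ up to the last growth step), add a single child $v$ and note that $D(R_v,R_u)$ holds, giving event $(f)$. To enforce the ``good'' conditions $(g)$--$(i)$, interpolate: when following a chain of single-child steps we must ensure each step shrinks the semi-perimeter by between $\hat T$ and $2\hat T$ — this is arranged by running the closure process from $R_v$ ``outward'' inside $R_u$ and stopping at the first moment the semi-perimeter of the spanned rectangle has increased past $\hat T$ (monotonicity and the fact that a single infection step changes $\phi$ by a bounded amount, which is $\ll \hat T = T/p^{1/(d-1)}$, make this possible). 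If instead $R_u$ genuinely requires merging two or more pieces, take those as the children $v_1,\dots,v_t$; condition $(d)$ holds by construction, and $(i)$ is obtained because the merging rectangles must each be substantially smaller than $R_u$ — if one were within $\hat T$ of $\phi(R_u)$ we would instead have treated the step as a single-child step. Conditions $(g)$ and $(h)$ follow from the same interpolation argument applied according to whether the child $v$ itself branches.

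Disjointness of the witnessing events — which is what makes the hierarchy useful via van den Berg--Kesten — is the point requiring genuine care, and I expect it to be the main obstacle. The subtlety is that the closure $[A\cap R_u]$ is a global object, so naively the events for different tree vertices could share infected sites. The resolution, following~\cite{Hol} and~\cite{d=r=3}, is that at a branching vertex the spanning of $R_u$ from $R_{v_1}\cup\dots\cup R_{v_t}$ uses \emph{no} sites of $A$ at all (it is pure deterministic closure, by condition $(d)$ and the definition of $\langle\cdot\rangle$), and at a single-child vertex the event $D(R_v,R_u)$ depends only on $A\cap(R_u\setminus R_v)$; so one assigns to each non-seed vertex the annulus $R_u\setminus(\text{union of its children})$ and to each seed $u$ the set $R_u$ itself, and checks these witness sets are pairwise disjoint by descending the tree. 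This is exactly the place where the slightly modified definition of $\langle A\rangle$ in this paper (components of $[A]$, rather than the iterative definition of~\cite{d=r=3}) must be checked to still make the argument go through; as remarked in the text, it does, because the only property used is that $\langle A\rangle$ is determined by $[A]$ and behaves monotonically under addition of infected sites, which both definitions share. The termination of the procedure is immediate since $\phi$ strictly decreases along every edge by at least a fixed positive amount (at least $1$, and at branchings at least $\hat T$), so the tree is finite; and the bound of $\ell+2$ children and the seed characterization are built in. Hence the constructed $\HH$ is good for $(\hat T,\hat Z)$ and satisfied by $A$.
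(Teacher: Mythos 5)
Your overall architecture (top-down recursion; seeds exactly when $\sht(R_u)\le \hat Z$; the observations that branching vertices carry no probabilistic event and that $D(R_v,R_u)$ depends only on $A\cap(R_u\setminus R_v)$; termination) matches the argument that this paper imports from Lemma~18 of~\cite{d=r=3} (and ultimately from~\cite{Hol}) rather than reproving. But the step your construction actually rests on is flawed. To produce single-child steps with $\hat T\le \phi(R_u)-\phi(R_v)\le 2\hat T$ you propose to ``run the closure process from $R_v$ outward inside $R_u$ and stop at the first moment the semi-perimeter has increased past $\hat T$'', justified by the claim that a single infection step changes $\phi$ by a bounded amount. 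That claim is false: when the cluster containing $R_v$ absorbs, via one newly infected site, another already-grown component of $[A\cap R_u]$, its bounding rectangle (hence $\phi$) can jump by an arbitrarily large amount, so no stopping time need land in the window. Worse, the intermediate rectangles obtained this way are bounding boxes of partially closed sets of $(A\cap R_u)\cup R_v$; they are in general not internally spanned by $A$, and the events $D(S_{i+1},S_i)$ between consecutive interpolants need not hold, because the actual growth from $S_{i+1}$ to $S_i$ may have used sites of $A$ outside $S_i$ and help from other clusters, resources unavailable in $D(S_{i+1},S_i)$. The proof in~\cite{Hol,d=r=3} never needs bounded increments: every vertex of the hierarchy is an internally spanned rectangle (so all $D$-events hold by monotonicity of the span), and the window is achieved by a case analysis of roughly the following form: either some internally spanned $S\subsetneq R_u$ has deficit $\phi(R_u)-\phi(S)\in[\hat T,2\hat T]$, and one takes the single-child step $R_u\to S$; or no such $S$ exists, in which case one iterates the ($\le\ell+2$)-piece decomposition inside the largest piece until its semi-perimeter first drops below $\phi(R_u)-\hat T$ (by assumption it then drops below $\phi(R_u)-2\hat T$), and one inserts a single-child edge of deficit $<\hat T$ into the last large piece, which is then made a \emph{branching} vertex --- legitimate precisely because condition $(h)$, unlike $(g)$, permits a drop smaller than $\hat T$ when the child is not itself a single-child vertex. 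Your sketch has no mechanism playing this role.

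Two further points. The ``key structural fact'' as you state it includes the claim that the $\le\ell+2$ disjointly spanned pieces can be chosen with semi-perimeter ``not too much smaller'' than $\phi(S)$; this is not true (the largest piece may have only about a $1/(\ell+2)$ fraction of it) and is not what the argument uses. Also, your disjointness check (``witness sets are pairwise disjoint by descending the tree'') is too quick at branching vertices: the sibling rectangles $R_{v_1},\dots,R_{v_t}$ may overlap as sets, so disjointness of the events in sibling subtrees does not follow from geometry; it follows because the decomposition lemma provides pairwise disjoint witness sets $A_1,\dots,A_t\subset A$ with $R_{v_i}$ internally spanned by $A_i$, and the induction hypothesis is applied to each $A_i$ separately. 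With these repairs the construction becomes the one of~\cite{d=r=3}; as written, the interpolation step is a genuine gap.
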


Given $\hat{T},\hat{Z} > 0$, let $\HH(R,\hat{T},\hat{Z})$ denote the collection of hierarchies for $R$ which are good for the pair $(\hat{T},\hat{Z})$. The next lemma makes the straightforward (but crucial) observation that there are only `few' possible hierarchies.

\begin{lemma}[Lemma~19 of~\cite{d=r=3}]\label{fewhiers}
Let $B,T,\hat{Z},p > 0$ and $n,d,k,\ell \in \N$. Let $R$ be a rectangle in $C([n]^d \times [k]^\ell,2)$ with $\textup{long}(R) \le B/p^{1/(d-1)}$, and let $\hat{T} = T/p^{1/(d-1)}$. Then there exists a constant $M = M(B,T,d,\ell)$ such that
$$|\HH(R,\hat{T},\hat{Z})| \; \le \; M\left( \frac{1}{p} \right)^M.$$
\end{lemma}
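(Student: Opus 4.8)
The plan is to encode a good hierarchy $\HH \in \HH(R,\hat T,\hat Z)$ by a bounded amount of data and then count the number of possibilities for that data. First I would bound the number of vertices $|V(G_\HH)|$ of the underlying tree. By property $(b)$ the tree has maximum out-degree $\ell+2$, so it suffices to bound its depth, or rather the total number of vertices. For this I would split the vertices into two types: \emph{branching} vertices (those with $|N_{G_\HH}^\->(u)| \ge 2$) and the rest. Along any root-to-leaf path, each branching vertex, by $(i)$, decreases $\phi(R_u)$ by at least $\hat T = T/p^{1/(d-1)}$; since $\phi(R) \le d\cdot\lg(R) \le dB/p^{1/(d-1)}$ and $\phi$ is always nonnegative, there are at most $dB/T$ branching vertices on any such path, hence at most $O_{B,T,d,\ell}(1)$ branching vertices in total (the tree of branching vertices has bounded depth and bounded degree). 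Similarly, a maximal chain of non-branching vertices $u_0 \to u_1 \to \cdots$ in which each $u_i$ has a unique child which also has a unique child must, by $(g)$, decrease $\phi$ by at least $\hat T$ at each step, so such a chain has length at most $dB/T$; and between consecutive branching-or-leaf vertices there is at most one such chain plus boundedly many exceptional vertices (those covered by $(h)$). Altogether $|V(G_\HH)| \le N_0$ for some $N_0 = N_0(B,T,d,\ell)$.

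Next, given the bound on $|V(G_\HH)|$, the number of possible \emph{shapes} of the oriented tree $G_\HH$ (together with the labelling of which child is which, which is irrelevant up to the $(\ell+2)!$ orderings) is bounded by a constant $C_1 = C_1(B,T,d,\ell)$. It remains to count, for each fixed tree shape, the number of ways to choose the rectangles $\{R_u : u \in V(G_\HH)\}$. Each rectangle $R_u \subset R$ is determined by its $2d$ corner coordinates (in the $[n]^d$ directions; the $[k]^\ell$ directions are always full), and each coordinate lies in an interval of length at most $\lg(R) \le B/p^{1/(d-1)}$. So there are at most $(B/p^{1/(d-1)})^{2d} \le (B/p)^{2d} $ choices for each $R_u$ (crudely bounding $p^{1/(d-1)} \ge p$ for $p \le 1$), and hence at most $\big((B/p)^{2d}\big)^{N_0} = (B/p)^{2dN_0}$ choices for the whole family. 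Combining, $|\HH(R,\hat T,\hat Z)| \le C_1 (B/p)^{2dN_0} \le M(1/p)^M$ for a suitable $M = M(B,T,d,\ell)$, absorbing the constant $C_1 B^{2dN_0}$ into the bound by enlarging $M$.

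The only step requiring genuine care is the bound on $|V(G_\HH)|$, and specifically making precise the claim that the non-branching part of the tree cannot be too large. The subtlety is that $(h)$ only gives an \emph{upper} bound $\phi(R_u) - \phi(R_v) \le 2\hat T$ when $v$ is branching or a seed (or a leaf), so a long chain of vertices each of whose unique child is branching could in principle decrease $\phi$ slowly; but such a vertex is immediately followed by a branching vertex or a seed, of which there are boundedly many, so this case contributes only $O(1)$ extra vertices per branching vertex. Thus I would organize the counting around the skeleton consisting of branching vertices and seeds, show this skeleton has $O_{B,T,d,\ell}(1)$ vertices, and then show each edge of the skeleton is subdivided by at most $O(dB/T)$ intermediate (unique-child, unique-grandchild) vertices via $(g)$. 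This is exactly the argument of Lemma~19 in~\cite{d=r=3}, and since our hierarchies satisfy the identical axioms $(a)$--$(j)$, the same proof applies verbatim; I would simply cite it, noting that the slightly altered definition of $\<A\>$ does not enter the counting argument at all (it enters only in Lemma~\ref{hierexists}).
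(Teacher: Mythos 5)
Your counting argument is correct and is essentially the same as the proof the paper relies on: the paper gives no proof here but cites Lemma~19 of~\cite{d=r=3}, whose argument is exactly your scheme (bound $|V(G_\HH)|$ by a constant via the drop of $\phi$ by at least $\hat{T}$ at all but boundedly many steps along each root-to-leaf path, then count tree shapes and the polynomially-in-$1/p$ many choices of the rectangles $R_u \subset R$). Your handling of the $(h)$-type vertices and the remark that the modified definition of $\<A\>$ plays no role in this counting are both accurate, so citing the earlier proof verbatim, as you propose, is exactly what the paper does.
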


Finally, we state the following key lemma, which gives us our fundamental bound on the probability that $A$ percolates. The lemma follows easily from Lemma~\ref{hierexists} and the van den Berg-Kesten Lemma (see Lemma~20 of~\cite{d=r=3} or Section~10 of~\cite{Hol}). Recall that $P_p(R)$ denotes the probability that a rectangle $R$ is spanned by a set $A \sim \Bin(R,p)$.

\begin{lemma}[Lemma~20 of~\cite{d=r=3}]\label{basic}
Let $R$ be a rectangle in $C([n]^d \times [k]^\ell,2)$, and let $\hat{T},\hat{Z} > p > 0$. Then
$$\Pr_p\big(R \in \< A \cap R \> \big) \; \le \; \sum_{\HH \in \HH(R,\hat{T},\hat{Z})} \Bigg( \prod_{N_{G_\HH}^\->(u) = \{v\}} P_p(R_v,R_u) \Bigg) \Bigg( \prod_{\textup{ seeds }u} P_p(R_u) \Bigg).$$
\end{lemma}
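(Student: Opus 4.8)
The plan is to combine the existence result for hierarchies (Lemma~\ref{hierexists}) with a union bound over all good hierarchies, and then to estimate the probability that a single good hierarchy is satisfied by means of the van den Berg--Kesten Lemma. Concretely, Lemma~\ref{hierexists} asserts that on the event $\{R \in \<A\cap R\>\}$ there is at least one $\HH \in \HH(R,\hat T,\hat Z)$ that is satisfied by $A$, so that
$$\big\{R \in \<A\cap R\>\big\} \;\subseteq\; \bigcup_{\HH \in \HH(R,\hat T,\hat Z)} \big\{\HH \text{ is satisfied by } A\big\}.$$
Since $\HH(R,\hat T,\hat Z)$ is a finite collection (finiteness is all that is needed here; Lemma~\ref{fewhiers} quantifies it), the union bound gives $\Pr_p(R \in \<A\cap R\>) \le \sum_{\HH} \Pr_p(\HH \text{ satisfied})$, and it remains to bound $\Pr_p(\HH \text{ satisfied})$ for a fixed $\HH \in \HH(R,\hat T,\hat Z)$.

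For such a fixed $\HH$, by the definition of ``satisfied'' the event in question is exactly the event that the finitely many events
$$\big\{R_u \in \<A\cap R_u\>\big\} \quad (u \text{ a seed}) \qquad \text{and} \qquad D(R_v,R_u) \quad \big(N_{G_\HH}^\->(u)=\{v\}\big)$$
occur \emph{disjointly} in the sense of the van den Berg--Kesten Lemma. Each of these is an increasing event in the infected set $A$: this is the standard monotonicity of internal spanning established in~\cite{Hol,d=r=3}, and it carries over verbatim to the present setting since (as noted in Section~\ref{hiersec}) our definition of $\<A\>$ differs only cosmetically from the one used there; recall also that $D(S,R)$ depends only on $A\cap(R\setminus S)$ and $\{R_u \in \<A\cap R_u\>\}$ depends only on $A\cap R_u$. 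Applying the van den Berg--Kesten Lemma iteratively — using the two-event statement together with the observation that a disjoint occurrence of $m$ increasing events $E_1,\ldots,E_m$ can be regrouped as a disjoint occurrence of the (increasing) event $E_1\circ\cdots\circ E_{m-1}$ with $E_m$, and inducting on $m$ — we obtain
$$\Pr_p(\HH \text{ satisfied}) \;\le\; \bigg( \prod_{\text{seeds }u} \Pr_p\big(R_u \in \<A\cap R_u\>\big) \bigg) \bigg( \prod_{N_{G_\HH}^\->(u)=\{v\}} \Pr_p\big(D(R_v,R_u)\big) \bigg).$$
By the definitions $P_p(R_u) = \Pr_p(R_u \in \<A\cap R_u\>)$ and $P_p(R_v,R_u) = \Pr_p(D(R_v,R_u))$, the right-hand side is precisely the product in the statement; summing over $\HH \in \HH(R,\hat T,\hat Z)$ finishes the proof.

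The only point requiring genuine care is the use of the van den Berg--Kesten Lemma for more than two events: one must verify that all the events involved are increasing (so that BK applies at each step) and that the $m$-fold disjoint occurrence can legitimately be peeled off one event at a time, which hinges on $E_1\circ\cdots\circ E_{m-1}$ being increasing whenever each $E_i$ is. Both facts are routine, but should be stated explicitly; the substantive ingredient — monotonicity of the internal-spanning and $D(S,R)$ events — is inherited directly from~\cite{Hol,d=r=3}.
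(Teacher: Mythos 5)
Your argument is correct and is essentially the paper's own: the paper proves this lemma by exactly the same route (Lemma~\ref{hierexists} to produce a good satisfied hierarchy, a union bound over the finitely many hierarchies in $\HH(R,\hat T,\hat Z)$, and an iterated application of the van den Berg--Kesten Lemma to the disjointly occurring seed and $D(R_v,R_u)$ events), simply deferring the routine details to Lemma~20 of~\cite{d=r=3} and Section~10 of~\cite{Hol}. Your explicit remarks on monotonicity of the spanning events and on peeling off one event at a time in the BK step are precisely the points those references verify.
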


\section{Crossing a rectangle}\label{crosssec}

In this section we shall bound from above the probability that a rectangle $R$ is `crossed' by a set $A \sim \Bin(R,p)$. Our bound (see Lemma~\ref{crossR}, below) is a generalization of Lemma~21 of~\cite{d=r=3}, but the proof will be somewhat simpler than that given in~\cite{d=r=3}; in particular, we shall avoid using Reimer's Theorem. We refer the reader also to the paper of Duminil-Copin and Holroyd~\cite{DCH}, where similar ideas are used.

We begin by fixing integers $d,\ell \in \N$, with $d \ge 2$. In order to save repetition, we shall keep these values fixed throughout the section. Let $G = C([n]^d \times [k]^\ell,2)$, where $n$ and $k$ will be chosen later.

A \emph{path in direction $j$} across a rectangle $R = [(a_1,\ldots,a_d),(b_1,\ldots,b_d)] \subset V(G)$ is a path in $G$ from a point in the set $\{\x \in R : x_j = a_j\}$ to a point in the set $\{\x\in R : x_j = b_j\}$. 

\begin{defn}
A rectangle $R = [(a_1,\ldots,a_d),(b_1,\ldots,b_d)]$ in $G = C([n]^d \times [k]^\ell,2)$ is said to be \emph{left-to-right crossed} in direction $j$ (or just crossed) by $A \subset V(G)$ if the set $A \cap R$ has the following property. Let
$$A' \; := \; (A \cap R) \,\cup\, \{\x \,:\, x_j \le a_j - 1\}.$$ Then there is path in $[A']$ across $R$ in direction $j$.
\end{defn}

We write $H^{\rightarrow(j)}(R)$ for this event, and define $H^{\leftarrow(j)}(R)$ (the event that $R$ is right-to-left crossed by $A$) similarly, with $x_j \le a_j - 1$ replaced by $x_j \ge b_j + 1$. As in~\cite{d=r=3}, we shall bound from above the function
$$h^{(j)}(R,t) \; := \; \max_{W \subset R, |W| \le t} \Big\{ \Pr_p \big( H^{\rightarrow(j)}(R) \: \big| \: W \subset A \big) \Big\}.$$
Note that $\Pr_p(H^{\rightarrow(j)}(R)) = h^{(j)}(R,0)$, and recall that $u(x) = 1 - e^{-qx}$. By symmetry, it suffices to bound $h^{(1)}(R,t)$.

\begin{lemma}\label{crossR}
Let $d,\ell \in \N$ and $B, \delta > 0$. If $k \in \N$ is sufficiently large then the following holds. Let $p > 0$ be sufficiently small, and let $R$ be a rectangle in $C([n]^d \times [k]^\ell,2)$, with $\dim(R) = (a_1, \ldots, a_d)$, where $a_i \le B/p^{1/(d-1)}$ for every $i \neq 1$.
Then, for any $t \in \N$,
$$\Big( \beta_{\ell+1}\big( u_1(R) \big) \Big)^{a_1+1} \; \le \; \Pr_p\big( H^{\rightarrow (1)}(R) \big) \; \le \; h^{(1)}(R,t) \; \le \; \Big( \beta_{\ell+1}\big(u_1(R) \big)  \Big)^{(1 - \delta)a_1 - k t},$$
where $u_1(R) = u\big( \prod_{i=2}^d a_i \big)$. 
\end{lemma}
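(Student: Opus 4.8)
## Proof proposal for Lemma~\ref{crossR}

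The plan is to prove the two non-trivial inequalities separately; the middle inequality $\Pr_p(H^{\rightarrow(1)}(R)) \le h^{(1)}(R,t)$ is immediate from the definition of $h^{(1)}$ by taking $W = \emptyset$. For the \textbf{lower bound}, I would exhibit a cheap way to cross $R$ in direction $1$: slice $R$ into $a_1$ consecutive ``slabs'' of width $1$ in direction $1$ (plus boundary), and observe that a sufficient condition for crossing is that each of these $(d+\ell-1)$-dimensional slabs, viewed as a cross-section of size $\prod_{i\ge 2} a_i$ in the $[n]^{d-1}$-directions times $[k]^\ell$, becomes internally filled once the previous slab is fully infected. Because the threshold on a slab-vertex whose $\ell$ thickness-coordinates are all in $\{1,k\}$ is $2$, and one of those is already supplied by the infected neighbouring slab, the slab behaves like $1$-neighbour percolation seeded from one side in the $[k]^\ell$-directions together with $2$-neighbour-type spreading; a direct computation (this is exactly the recursion defining $\beta$) shows the probability a single slab fills, given the previous one is full, is at least $\beta_{\ell+1}(u_1(R))$. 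The events for distinct slabs can be made disjoint (they use disjoint vertex sets), so multiplying gives the $\big(\beta_{\ell+1}(u_1(R))\big)^{a_1+1}$ lower bound. I would lift the relevant one-dimensional fill estimate essentially verbatim from~\cite{d=r=3}/\cite{Hol}; this is the routine direction.

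The \textbf{upper bound} is the heart of the lemma and is where I expect the real work. Following the sketch in Section~\ref{sketchsec}, I would partition $R$ along direction $1$ into consecutive blocks $S_1,\dots,S_M$ of bounded width $w$ (a constant depending on $d,\ell,\delta$ but not $p$), so that $M \approx a_1/w$. The key coupling is: to cross $R$ one must, for each $j$, cross $S_j$ in direction $1$ \emph{in the model where every vertex of $R \setminus S_j$ is already infected}. Under that coupling the bootstrap structure on the thin block $S_j$ degenerates — the two coordinate directions orthogonal to direction $1$ within the ``$[n]^d$ part'' are now effectively boundary-supplied on both faces, so each interior vertex of $S_j$ needs only $\ell+1$ infected neighbours among the remaining directions — i.e. $S_j$ looks like a copy of a thickened box $C([w]\times[k']^{?},\ell+1)$-type structure. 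The probability of crossing such a thin block should be bounded above by something like $\big(\beta_{\ell+1}(u_1(R))\big)^{(1-\delta/2)w}$ once $w$ (hence the aspect ratio) and then $k$ are chosen large; one then multiplies over the $M \approx a_1/w$ blocks. The conditioning $W \subset A$ with $|W| \le t$ can affect at most $t$ blocks (each of ``thickness'' $\le k$ in the relevant sense), which is exactly where the $-kt$ correction in the exponent comes from: discard those blocks from the product and bound their contribution by $1$.

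Two points deserve care. First, the crossing events of distinct blocks $S_j$ are \emph{not} automatically independent, because the coupling ``fill everything outside $S_j$'' over-counts infected sites; however, $H^{\rightarrow(1)}(R)$ being crossed does imply a crossing of each $S_j$ in its own (more favourable, hence higher-probability) coupling \emph{using only the sites of $A$ inside $S_j$}, so these are genuinely disjointly-witnessed events in disjoint vertex sets and the van den Berg--Kesten Lemma applies to give the product bound. This is the manoeuvre that replaces the use of Reimer's Theorem from~\cite{d=r=3}. Second, I need the single-block estimate
$$
\Pr_p\big(S_j \text{ crossed in the all-outside-infected coupling}\big) \;\le\; \big(\beta_{\ell+1}(u_1(R))\big)^{(1-\delta)w},
$$
valid uniformly over the allowed cross-sectional dimensions $a_i \le B/p^{1/(d-1)}$ ($i \ne 1$); here one uses that $\beta_{\ell+1}(u_1(R))$ is bounded away from $1$ on this range (since $u_1(R) = u(\prod_{i\ge2}a_i)$ is bounded away from $0$, by the hypothesis that each $a_i$ is \emph{not too small} — this is implicitly used in the surrounding argument), so that paying a constant-factor loss per block, absorbed into the $\delta$, is affordable. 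The main obstacle, concretely, is proving this thin-block bound with the clean exponent $(1-\delta)w$: it requires analysing the degenerate $(\ell+1)$-neighbour process on a block of width $w$ and showing that each unit step in direction $1$ costs a factor $\beta_{\ell+1}(u_1(R))^{1-\delta/w}$ or so, which is a finite (width-$w$) combinatorial computation that must be done carefully but uses no new ideas — the choice $T \ll Z \ll \delta \ll 1 \ll B \ll k$ is arranged precisely so all the error terms in this step close up.
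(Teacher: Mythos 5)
Your overall architecture for the upper bound coincides with the paper's: partition $R$ along direction $1$ into blocks of bounded width, couple so that each block runs its own process with the threshold of the vertices on its two direction-$1$ faces lowered by one (equivalently, everything outside the block infected), observe that a crossing of $R$ forces a crossing of every block in its own more favourable process using only $A$ restricted to that block, take the product over blocks (the van den Berg--Kesten Lemma is not even needed here: the block events are measurable with respect to $A$ on pairwise disjoint sets, hence independent), and discard the at most $t$ blocks meeting $W$ to produce the $-kt$ correction. One descriptive slip: it is not the directions orthogonal to direction $1$ that become ``boundary-supplied''---each block spans the full cross-section of $R$---but rather the two direction-$1$ faces of the block, so the coupled block is precisely the structure $C([\m]\times[\k],1)$ in which direction $1$ has become an additional thickness coordinate of length $s$ (the paper takes $s=k/10$ rather than a width independent of $k$, which is harmless but explains why $k$ must be large).

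The genuine gap is the single-block estimate, which you assert is ``a finite (width-$w$) combinatorial computation\ldots using no new ideas.'' It is neither finite nor routine: the block is bounded only in direction $1$ and in the thickness directions, while its cross-section in directions $2,\ldots,d$ has sides as large as $B/p^{1/(d-1)}\to\infty$, and you need the crossing probability to decay at the \emph{exact} rate $\beta_{\ell+1}\big(u_1(R)\big)$ per unit length, uniformly over these unbounded cross-sectional dimensions---otherwise the constant in Theorem~\ref{2r2tech} is lost. This is precisely what Lemmas~\ref{detercross}, \ref{blocked} and \ref{probcross} provide: the deterministic $L^\pm$-blocker/boundary-edge analysis shows that a crossing of the block forces either two infected sites within distance two, or an unblocked boundary edge in direction $1$, or a not-fully-blocked edge in another thickness direction, and the probability of an unblocked edge is then computed via the $L$-gap estimate of Lemma~\ref{L6}, which is where $\beta_{\ell+1}$ enters with the correct exponent. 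This blocker argument is the new idea of the section (it is what replaces Reimer's theorem from~\cite{d=r=3}), and your proposal contains no substitute for it, so the proof is incomplete exactly at its technical heart. (A smaller point: your lower-bound sketch---multiplying conditional slab-filling probabilities ``made disjoint''---is not a valid derivation as stated, since a naive slab-by-slab occupation argument only yields $u_1(R)^{a_1}\ll\beta_{\ell+1}(u_1(R))^{a_1+1}$; but as you ultimately defer to~\cite{d=r=3}, exactly as the paper does, this is not the main issue.)
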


As mentioned above, the strategy we shall use to prove this lemma differs from that in~\cite{d=r=3}. Instead of directly looking at the probability of this rectangle being left-to-right crossed, we will rather study the probability that a rectangle $S$ with dimensions $(s,a_2,\ldots, a_d)$, and with $r(\x)$ decreased by one for each $\x \in S$ with $x_1 = s$, is crossed from left to right in direction $1$, with $s$ large but \emph{constant} (so in particular $s \ll a_1$). Having proven an essentially sharp estimate for this rectangle, we shall be able to extend this bound to any length $a_1$, by splitting the large rectangle into rectangles of width $s$.

This point of view has the following advantage: it allows us to study the structure of the bootstrap process under the assumption that no two sites in $A \cap S$ are close to one another. In order to do so, we introduce the following slight generalization of the structure $C([n]^d \times [k]^{\ell},2)$. It corresponds to (or, more precisely, may be coupled with) the process inside the rectangle $S$ when everything in $R \setminus S$ is already infected.

\begin{figure}
\begin{center}
\includegraphics[width=0.60\hsize]{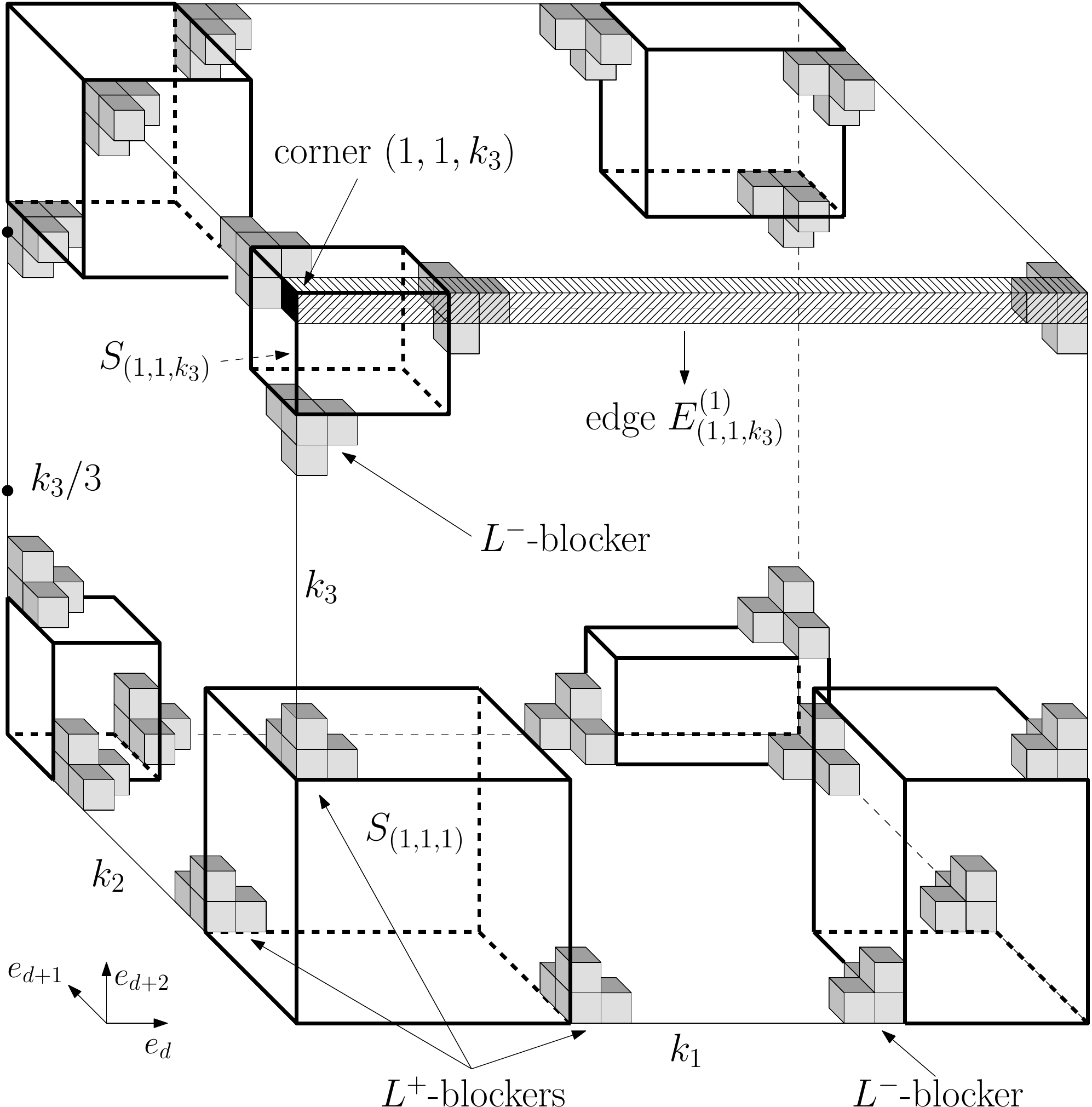}
\end{center}
\caption{The set $[m_1]\times\dots \times [m_{d-1}]\times [k_1]\times \dots \times [k_{\ell+1}]$, with examples of blockers and edges. Observe that the $d-1$ first dimensions are not depicted: each `unit' square is a set $\M_\x$.}
\label{fig:setS}
\end{figure}

Given vectors $\m \in \N^{d-1}$ and $\k \in \N^{\ell + 1}$, we define $C([\m] \times [\k],1)$ to be the bootstrap structure such that
\begin{enumerate}
\item[$(a)$] the vertex set is $S = [m_1] \times \dots \times [m_{d-1}] \times [k_1] \times \dots \times [k_{\ell +1}]$,\\[-2ex]
\item[$(b)$] the edge set is induced by $\ZZ^{d+\ell}$,\\[-2ex]
\item[$(c)$] $v = (a_1,\ldots,a_{d-1},b_1,\ldots,b_{\ell+1})$ has threshold $1 + \left| \big\{ j \in [\ell+1] : b_j \not\in \{1,k_j\} \big\} \right|$.
\end{enumerate}
We remark that in our applications, we shall take $k_2 = \dots = k_{\ell+1} = k$, and $k_1 = s$, where $k$ is much larger than $s$.

To study this structure, we slice the set $S = [\m] \times [\k]$ into sets $\M_\x$, see Figure~\ref{fig:setS}, where
$$\M_\x \; := \; \big\{ \y \in [\m] \times [\k] \,:\, y_{d-1+j} = x_j \text{ for every } j \in [\ell+1] \big\}$$
for each $\x \in [\k]$. Given a vector $\x \in \ZZ^d$, let $\C(\x) := \{1,x_1\} \times \dots \times \{1,x_d\}$ denote the set of `corners' of $\x$. Now, given a corner $\b \in \C(\k) = \{1,k_1\} \times \dots \times \{1,k_{\ell+1}\}$ of $S$, and a direction $j \in [\ell+1]$, we define a  \emph{boundary edge} (or simply an \emph{edge}) of $S$ to be the union of sets $\M_\x$ over those $\x$ with $x_i = b_i$ for $i \neq j$, so
$$E^{(j)}_\b \; = \; \bigcup_{t=1}^{k_j} \big\{ \M_\x \,:\, x_i = b_i \textup{ if } i \neq j \textup{ and } x_j = t \big\}.$$ 
Note that if $\b_i = \b'_i$ for each $i \neq j$, then $E^{(j)}_\b = E^{(j)}_{\b'}$.

We shall need the following generalization of the notion of \emph{blockers} from~\cite{d=r=3}. Let $e_i = (0,\ldots,0,1,0,\ldots,0)$ denote the vector with a single $1$ in position $i$.

\begin{defn}
Given $\b \in \C(\k)$ and $j \in [\ell+1]$, let $\x \in E^{(j)}_\b$. The set $\M_\x$ is an \emph{$L^+$-blocker} of the edge $E^{(j)}_\b$ if the events $U_\x$ and $\{V_\x^{(i)} : d \le i \le d+\ell\}$ all occur, where
\begin{align*}
U_\x  & \,:=\,  \big\{ \M_\x\textup{ is not occupied} \big\},\\[+1ex]
V_\x^{(i)} & \,:=\,  \left\{
\begin{array} {l}
\big\{ \M_{\x - e_i} \textup{ is not occupied} \big\}  \textup{ if } i \neq j \text{ and }b_i=k_i,\\[+1ex]
\big\{ \M_{\x + e_i} \textup{ is not occupied} \big\} \text{ otherwise}.
\end{array}\right.
\end{align*}
It is an \emph{$L^-$-blocker} of $E^{(j)}_\b$ if the event $V_\x^{(j)}$ in the definition above is replaced by the event
$$\hat{V}_\x^{(j)} \; = \; \{ \M_{\x - e_j} \textup{ is not occupied} \}.$$
The edge $E^{(i)}_\b$ is said to be \emph{blocked} if there exist $\y,\z \in E^{(i)}_\b$ such that $\M_{\y}$ is an $L^+$-blocker and $\M_{\z}$ is an $L^-$-blocker of $E^{(i)}_\b$, with $z_i > y_i$. It is said to be \emph{fully blocked} if moreover $y_i < (k_i/3) - 1$ and $z_i > (2k_i/3) + 1$. 
 \end{defn}
 
Note that $L$-blockers are so-named because of their shape; $L$ is not a variable. The following lemma is purely deterministic.

\begin{lemma}\label{detercross}
Let $\m \in \N^{d-1}$ and $\k \in \N^{\ell+1}$, let $A \subset C([\m] \times [\k],1)$, and let $S = [\m] \times [\k]$. Suppose that there is a path in $[A]$ across $S$ in direction $j$, for some $d \le j \le d+\ell$. Then one of the following holds:
\begin{itemize}
\item[$(a)$] $A$ contains two sites $x \neq y$ with $d_G(x,y) \le 2$.
\item[$(b)$] One of the boundary edges $\{ E^{(j)}_\b \,:\, \b \in \C(\k) \}$ is not blocked.
\item[$(c)$] One  of the boundary edges $\{ E^{(i)}_\b \,:\, j \neq i \in [\ell+1], \,\b \in \C(\k) \}$ is not fully blocked.
\end{itemize}
\end{lemma}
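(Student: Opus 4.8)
The plan is to argue by contraposition: assuming that none of $(a)$, $(b)$, $(c)$ holds, I will show that there can be no path in $[A]$ crossing $S$ in direction $j$. So suppose every boundary edge $E^{(j)}_\b$ ($\b \in \C(\k)$) is blocked, every boundary edge $E^{(i)}_\b$ with $i \ne j$ is fully blocked, and no two sites of $A$ are within graph-distance $2$ of each other. Under these hypotheses the occupied sites of $A$ are `sparse', and the key observation is that each blocker $\M_\x$ on a boundary edge forces a whole `wall' of uninfected sites that the infection cannot cross, because in the structure $C([\m]\times[\k],1)$ an internal site of a slice $\M_\x$ (one with $x_j \notin \{1,k_j\}$ for some $j$) has threshold strictly larger than $1$, and with $A$ so sparse, a site can only become infected if it has an infected neighbour inside its own slice, or it is a boundary site ($x_j \in \{1,k_j\}$ for all $j$) with a single infected neighbour in an adjacent slice. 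The shape of an $L^+$- or $L^-$-blocker is exactly what is needed to block infection from spreading past it in the relevant direction.

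The core of the argument is a `no-escape' claim for slices. First I would analyse the infection inside a single slice $\M_\x \cong [m_1]\times\cdots\times[m_{d-1}]$ (with the $[\m]$-threshold equal to $1$ there, i.e. ordinary $1$-neighbour bootstrap, which is just connectivity): since no two elements of $A$ are close, each element of $A\cap \M_\x$ sits in its own slice and infection within $\M_\x$ alone does nothing (an isolated infected vertex stays isolated), so any spreading must come from neighbouring slices $\M_{\x\pm e_i}$, $d \le i \le d+\ell$. Thus I track, for each slice $\M_\x$, whether it is `touched' (contains an infected site), and the process on slices is itself a kind of bootstrap process on the $(\ell+1)$-dimensional grid $[\k]$ where thresholds come from part $(c)$ of the definition of $C([\m]\times[\k],1)$. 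Here is where blockers do their job: an $L^+$-blocker of $E^{(j)}_\b$ at $\M_\x$ guarantees $\M_\x$ and the appropriate neighbour $\M_{\x\mp e_i}$ (for every $i \ne j$) are untouched, which — combined with the threshold being $>1$ at internal slices — prevents the infection from propagating `around' it in direction $j$; an $L^-$-blocker does the same from the other side; and `fully blocked' (the $y_i < k_i/3 - 1$, $z_i > 2k_i/3+1$ conditions) ensures the two blockers sit far enough apart that their forced-empty regions can be combined with those of neighbouring edges to seal off the slice-grid entirely. Combining a blocked edge in direction $j$ with fully blocked edges in all other directions $i\ne j$, one shows inductively (on time, or on the number of infected slices) that the set of touched slices never reaches both faces $x_j = 1$ and $x_j = k_j$ simultaneously — contradicting the existence of a crossing path in direction $j$.

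The main obstacle, I expect, will be the bookkeeping in the inductive `no-escape' step: one needs a clean invariant on the set of touched slices — something like `the touched slices are confined to a sub-box of $[\k]$ that meets at most one of the two $x_j$-faces', or a barrier hyperplane that is never breached — and then to verify that each of the three failure modes (adjacent-in-$\M_\x$ spreading, boundary-slice single-neighbour spreading, internal-slice spreading requiring $\ge 2$ neighbours) respects this invariant given the blocker configuration. The geometry of the $L$-blockers (why the particular choice of which neighbour $\M_{\x\pm e_i}$ is required to be empty, depending on whether $b_i = k_i$ or not) has to be matched exactly against the direction in which infection could otherwise leak; getting the case analysis for all $\b \in \C(\k)$ and all $i \ne j$ right, uniformly, is the delicate part. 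The hypotheses `no two sites within distance $2$' and `fully blocked' are precisely the two slack conditions that make this case analysis go through, and the proof should amount to checking that in each case the infection is trapped.
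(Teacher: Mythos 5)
Your setup (contraposition, the slice-level picture, blockers acting as walls along the boundary edges) is the right intuition, but the proposal stops short of the actual proof and, as written, would not go through. The paper's argument is not an induction on time with a connectivity invariant: it constructs an explicit stable superset. For each corner $\b \in \C(\k)$ one takes the box $S_\b = [\m] \times I_1(\b) \times \dots \times I_{\ell+1}(\b)$ delimited by the outermost blockers (the minimal $L^+$-blocker coordinate $y_i(\b)$ and maximal $L^-$-blocker coordinate $z_i(\b)$ in each direction $i$), and sets $\hat S = \bigcup_\b S_\b$. The ``blocked'' and ``fully blocked'' hypotheses are used exactly once, to show the boxes $S_\b$ are pairwise at distance at least three; then a short check (a first newly infected site has at most one neighbour in $\hat S$ and at most one in $A$, hence threshold at most $2$, and the blocker definition rules out the threshold-$2$ and threshold-$1$ cases) shows $[\hat S \cup A] = \hat S \cup A$, so $[A] \subset \hat S \cup A$, which visibly contains no crossing path. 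Your sketch never identifies this set; you only gesture at ``some invariant'', and the two candidates you name are wrong in shape: the touched slices are \emph{not} confined to a sub-box meeting only one $x_j$-face, and no hyperplane is guaranteed free of infection, because $A$ may have (isolated) sites anywhere and the corner slices at \emph{both} ends of direction $j$ have threshold $1$ and fill up completely. The correct invariant is containment in the union of all $2^{\ell+1}$ corner boxes together with $A$, not a single barrier.

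There are also two concrete inaccuracies in the supporting reasoning. First, ``infection within $\M_\x$ alone does nothing (an isolated infected vertex stays isolated)'' is false for corner slices $\M_\b$, $\b \in \C(\k)$, where every site has threshold $1$; this is precisely why the construction must swallow whole corner regions rather than single slices. Second, the reduction to ``a bootstrap process on the $(\ell+1)$-dimensional grid $[\k]$'' is not accurate as stated: whether infection advances along an edge slice depends on more than which neighbouring slices are touched (e.g.\ a threshold-$2$ site can be infected by one neighbour inside its own slice plus one in an adjacent slice, or by two neighbours in two different adjacent slices, a case your trichotomy of failure modes omits). These interactions are exactly what the $L^+$/$L^-$-blocker definition is designed to control, and they are handled in the paper automatically by the stable-set verification; in your time-induction framework they would have to be tracked by hand, which is the case analysis you acknowledge you have not done. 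So the proposal is a reasonable strategy outline, but the central construction and verification are missing.
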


\begin{proof}
Suppose that none of the three events holds; that is, $A \subset S$ does not contain two sites at distance at most two from one another, and for every $\b \in \C(\k)$, the boundary edge $E^{(j)}_\b$ is blocked, and the boundary edges $\{ E^{(i)}_\b \,:\, j \neq i \in [\ell+1] \}$ are all fully blocked. 

We define a set $\hat{S}$ as follows (see Figure~\ref{fig:setS}). For each $\b \in \C(\k)$ and $i \in [\ell+1]$, let $y_i(\b)$ denote the minimal $i$-coordinate of an $L^+$-blocker $\M_\y$ of $E^{(i)}_\b$, and let $z_i(\b)$ denote the maximal $i$-coordinate of an $L^-$-blocker $\M_\z$ of $E^{(i)}_\b$. Note that $y_j(\b) < z_j(\b)$, and that $y_i(\b) < (k_i/3) - 1$ and $z_i(\b) > (2k_i/3) + 1$ for every $j \neq i \in [\ell+1]$.

For each $\b \in \C(\k)$, let 
$$S_\b \; := \; [\m] \times I_1(\b) \times \dots \times I_{\ell+1}(\b),$$
where $I_i(\b) = [1,y_i(\b) - 1]$ if $\b_i = 1$, and $I_i(\b) =  [z_{\ell+1}(\b) + 1,k_i]$ if $\b_i = k_i$. Let 
$$\hat{S} \; := \; \bigcup_\b S_\b.$$

\medskip
\noindent \ul{Claim}: $\hat{S} \cup A$ is a stable set, i.e., $[\hat{S} \cup A] = \hat{S} \cup A$.

\begin{proof}[Proof of claim]
We first claim that the sets $S_\b$ are pairwise at distance at least three. Indeed, let $\b,\b' \in \C(\k)$, and suppose that there exists some $\x \in [\k]$ such that $d(S_\b,\M_\x) \le 1$ and $d(S_{\b'},\M_\x) \le 1$. Suppose first that $\b_i = \b'_i$ for every $i \neq j$. Then either $\b = \b'$, or $\b_j = 1$ and $\b'_j = k_j$, say. But then $d(S_\b,\M_\x) \le 1$ implies that $\x_j \le y_j(\b)$, and $d(S_{\b'},\M_\x) \le 1$ implies that $\x_j \ge z_j(\b)$. Since $y_j(\b) < z_j(\b)$, this is a contradiction. 

So assume that $\b_i = 1$ and $\b'_i = k_i$ for some $i \neq j$. Since $d(S_\b,\M_\x) \le 1$, we have $\x_i \le y_i(\b)$, and since $d(S_{\b'},\M_\x) \le 1$, we have $\x_i \ge z_i(\b')$. But $y_i(\b) < (k_i/3) - 1 < (2k_i/3) + 1 < z_i(\b')$, which is a contradiction, and hence the sets $S_\b$ are pairwise at distance at least three, as claimed. 

Suppose that $[\hat{S} \cup A] \setminus \hat{S} \cup A$ is non-empty, and consider the first new site $v$ to be infected. It has at most one neighbour in $\hat{S}$, by the previous observation, and at most one neighbour in $A$, since $A$ does not contain two sites at distance at most two. Thus $v$ must have threshold at most two, and hence it belongs to an edge, $E^{(i)}_\b$, say.

Now, simply note that if a vertex of $E^{(i)}_\b$ is at distance one from $\hat{S}$, then it is in $\M_\y$ for some $\M_\y$ which is a blocker of $E^{(i)}_\b$. By the definition of a blocker, these vertices have no element of $A \setminus \hat{S}$ as a neighbour, and so it must have threshold one. But if $v$ has threshold one, then $v \in \M_\b$ for some $\b \in \C(\k)$, and since $v \not\in \hat{S}$ (by assumption), it follows that $S_\b$ is empty, and that $\M_\b$ is a blocker for $E^{(i)}_\b$, so $v$ has no neighbour in $\hat{S} \cup A$. This final contradiction proves the claim.
\end{proof}

It follows immediately from the claim that $[A] \subset \hat{S} \cup A$. But there is no path in $\hat{S} \cup A$ across $S$ in direction $j$, since the rectangles $S_\b$ are pairwise at distance at least three, and the elements of $A$ are pairwise at distance at least three. The lemma follows.
\end{proof}

We shall need the following bound on the probability that an edge is blocked.

\begin{lemma}\label{blocked}
Given $\m \in \N^{d-1}$ and $\k \in \N^{\ell+1}$, let $S = C([\m] \times [\k],1)$. Let $\b \in \C(\k)$, and let $j \in [\ell+1]$ and $p > 0$. Then
$$\Pr_p( E^{(j)}_\b \textup{ is not blocked} ) \; \le \; k_j \Big( \beta_{\ell+1}\big(u(S) \big) \Big)^{k_j - 2},$$
and
$$\Pr_p( E^{(j)}_\b \textup{ is not fully blocked} ) \; \le \; 2 \Big( \beta_{\ell+1}\big(u(S) \big) \Big)^{k_j/3 - 2},$$
where $u(S) = u\big( \prod_{i = 1}^{d-1} m_i \big)$.
\end{lemma}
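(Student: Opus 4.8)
The plan is to estimate the probability that a boundary edge $E^{(j)}_\b$ is blocked (resp.\ fully blocked) by reducing the event ``$E^{(j)}_\b$ is not blocked'' to a union of simpler events about where $L^+$- and $L^-$-blockers can occur, and then to bound the probability of each such event using disjoint occurrence and the definition of $\beta_{\ell+1}$. Throughout, recall that $\M_\x$ is a single copy of $[m_1]\times\dots\times[m_{d-1}]$, so the event that $\M_\x$ ``is not occupied'' has probability exactly $(1-p)^{m_1\cdots m_{d-1}} = e^{-q u^{-1}\!(\cdot)}$... more precisely $\Pr_p(\M_\x\textup{ not occupied}) = 1 - u(S)$, where $u(S) = u\big(\prod_{i=1}^{d-1} m_i\big)$. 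Write $w := u(S)$ for brevity.

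First I would analyse the structure of a single edge $E^{(j)}_\b = \bigcup_{t=1}^{k_j}\M_{\x(t)}$, where $\x(t)$ is the point of $E^{(j)}_\b$ with $j$-coordinate $t$ (and all other coordinates equal to those of $\b$). For a fixed $t$, the event ``$\M_{\x(t)}$ is an $L^+$-blocker of $E^{(j)}_\b$'' is the intersection of the events that $\M_{\x(t)}$ and its $\ell+1$ relevant neighbours $\M_{\x(t)\pm e_i}$ ($d\le i\le d+\ell$) are all unoccupied; since these are $\ell+2$ \emph{distinct} translates of $\M$ (the sites of $[\m]\times[\k]$ are partitioned by the $\M_\x$), this event has probability $(1-w)^{\ell+2}$ — wait: the $L^+$-blocker involves $U_\x$ together with $V_\x^{(i)}$ for $d\le i\le d+\ell$, i.e.\ $1 + (\ell+1) = \ell+2$ distinct slices. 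So $\Pr_p(\M_{\x(t)}\text{ is an }L^+\text{-blocker}) = (1-w)^{\ell+2}$. The key point is that the events ``$\M_{\x(t)}$ is an $L^+$-blocker of $E^{(j)}_\b$'' for different $t$ are \emph{not} independent (consecutive slices share a neighbour), but the relevant events at $t$ and $t'$ with $|t-t'|\ge 2$ involve disjoint sets of slices, hence are independent.

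The main step is then the following combinatorial reformulation: $E^{(j)}_\b$ fails to be blocked precisely when there is no pair $y_j < z_j$ with $\M_{\x(y_j)}$ an $L^+$-blocker and $\M_{\x(z_j)}$ an $L^-$-blocker. I would split according to the smallest index $y^\star$ at which an $L^+$-blocker occurs (if none occurs, that is one more easily-bounded case): if $E^{(j)}_\b$ is not blocked, then no $L^+$-blocker occurs before $y^\star$ and no $L^-$-blocker occurs in $(y^\star, k_j]$. Actually, the cleanest route is to observe that ``not blocked'' implies that either no $L^+$-blocker occurs at all, or no $L^-$-blocker occurs after the first $L^+$-blocker; in either case, for at least $\lfloor (k_j-2)/2\rfloor$ or so of the indices $t$ lying in a suitable window, $\M_{\x(t)}$ is not a blocker of the required type, and picking every other index gives independent events each of probability at most $1 - (1-w)^{\ell+2}$... but I need the $\beta$ function, not this. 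Here is where the definition $\beta_k(u)^2 = (1-(1-u)^k)\beta_k(u) + u(1-u)^k$ enters: $\beta_{\ell+1}(w)$ is exactly the relevant ``transfer-matrix eigenvalue'' governing the probability that a length-$m$ strip of slices avoids an occupied–occupied clustering while leaving room for blockers. Concretely, I would set up a one-dimensional Markov-type recursion along the $k_j$ slices of $E^{(j)}_\b$: let $a_m$ be the probability that among the first $m$ slices there is no $L^+$-blocker and (appropriate boundary condition), show $a_{m+1} \le (1-(1-w)^{\ell+1}) a_m + \dots$ or a similar two-term recursion whose characteristic root is $\beta_{\ell+1}(w)$ — here I'd use the identity for $\beta_{\ell+1}$ to verify that $\beta_{\ell+1}(w)^{k_j}$ dominates the solution — and conclude $\Pr_p(E^{(j)}_\b\text{ not blocked}) \le (\text{polynomial in }k_j)\cdot\beta_{\ell+1}(w)^{k_j-2}$, with the polynomial factor absorbed into the stated $k_j$. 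The fully-blocked bound is identical but restricts attention to the middle third of the edge, giving $\beta_{\ell+1}(w)^{k_j/3-2}$, with the constant $2$ accounting for the two ``ends'' (needing an $L^+$-blocker below $k_j/3$ and an $L^-$-blocker above $2k_j/3$) via a union bound.

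I expect the main obstacle to be the bookkeeping in the recursion: getting the two-term linear recurrence to come out with characteristic polynomial $x^2 - (1-(1-w)^{\ell+1})x - w(1-w)^{\ell+1} = 0$ (or whatever exact form matches $\beta_{\ell+1}$'s defining quadratic) requires carefully choosing the right ``state'' for the Markov chain — probably the pair (is the current slice unoccupied, was the previous slice unoccupied) — and tracking the interplay between the ``no two sites of $A$ within distance two'' style constraints implicit in the blocker definition and the blocking events themselves. An alternative, slightly lossier but conceptually simpler approach that I would fall back on if the sharp recursion is painful: directly upper bound $\Pr_p(E^{(j)}_\b\text{ not blocked})$ by summing over the position of the first $L^+$-blocker and using the BK-type independence of every-other-slice events, each contributing a factor $\le \beta_{\ell+1}(w)$ after invoking the elementary inequality $1 - (1-w)^{\ell+1}(1-w) \le \beta_{\ell+1}(w)^2 / (1-w)^{\text{something}}$ derived from the defining quadratic — the point being that $\beta_{\ell+1}(w) \ge 1-(1-w)^{\ell+1}$ and $\beta_{\ell+1}(w)\ge \sqrt{w}(1-w)^{(\ell+1)/2}$ both follow from the quadratic, and one of these two bounds suffices for each slice depending on whether it is occupied. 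Either way the final polynomial prefactor ($k_j$ in the first bound, $2$ in the second) is harmless since $\beta_{\ell+1}(w) < 1$ for $w\in(0,1)$ and $k_j$ will be taken large.
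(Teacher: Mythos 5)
Your decomposition is exactly the paper's: you sum over the position $y^\star$ of the first $L^+$-blocker, note that ``no $L^+$-blocker at indices $<y^\star$'' and ``no $L^-$-blocker at indices $>y^\star+1$'' depend on disjoint collections of slices and are hence independent, and you get the fully-blocked bound by a union bound over the two ends (needing an $L^+$-blocker in the first third and an $L^-$-blocker in the last third). The gap is at the single quantitative step that carries all the weight: the claim that a run of $m$ consecutive indices containing no blocker of the required type has probability at most $\beta_{\ell+1}\big(u(S)\big)^{m-O(1)}$. In the paper this is precisely Lemma~\ref{L6} (Lemma~6 of~\cite{d=r=3}), stated immediately before Lemma~\ref{blocked}, and it can simply be invoked: a blocker at index $t$ is exactly an ``$L$-gap'' for the independent events $U_t$ (slice $t$ occupied) and $V^{(i)}_t$ (lateral neighbouring slices occupied), each of probability $u(S)$. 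You instead propose to re-derive this via a two-state recursion; the idea is sound (that is how Lemma~\ref{L6} is proved, and your characteristic polynomial $x^2-(1-(1-w)^{\ell+1})x-w(1-w)^{\ell+1}$, with $w=u(S)$, is the correct one), but you leave it at ``whatever exact form matches'', flag the bookkeeping as the main obstacle, and never verify it, so as written the crucial estimate is asserted rather than proved.

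Moreover, the fallback you sketch would fail. Taking every other index gives independent events ``index $t$ is not an $L^+$-blocker'', each of probability $1-(1-w)^{\ell+2}$, so that route can give at best $\big(1-(1-w)^{\ell+2}\big)^{m/2}$ for a run of length $m$. Writing $v=(1-w)^{\ell+1}$, the defining quadratic gives $\beta_{\ell+1}(w)^2=(1-v)\beta_{\ell+1}(w)+wv$, so the inequality $1-(1-w)^{\ell+2}=1-v+wv\le\beta_{\ell+1}(w)^2$ would force $\beta_{\ell+1}(w)\ge 1$, which is false; concretely, for small $w$ one has $\beta_{\ell+1}(w)^2\approx w$ while $1-(1-w)^{\ell+2}\approx(\ell+2)w$, so the alternate-index bound overshoots by a factor of roughly $(\ell+2)^{m/2}$, which cannot be absorbed into the prefactor $k_j$ --- and the sharpness of the base $\beta_{\ell+1}$ is exactly what the constant $\lambda(d,r)$ downstream depends on. So either cite Lemma~\ref{L6} directly or carry out the two-state induction in full; the hand-waved versions do not suffice.
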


In order to prove Lemma~\ref{blocked}, we shall need the following lemma from~\cite{d=r=3}, which is easily proved by induction on $m$. Given $\ell,m \in \N$, consider some sequence of events
$$\E \; = \; \Big\{U_i : i \in [m+1] \Big\} \cup \Big\{ V^{(i)}_j : i \in [\ell], j \in [m] \Big\}.$$
An $L$-gap in $\E$ is an event $\neg \left( U_i \vee U_{i+1} \vee V^{(1)}_i \vee \ldots \vee V^{(\ell)}_i \right)$ for some $i \in [m]$.

\begin{lemma}[Lemma~6 of~\cite{d=r=3}]\label{L6}
Let $\ell,m \in \N$, let $u \in (0,1)$, and suppose that each event in the set
$$\E \; = \; \Big\{U_i : i \in [m+1] \Big\} \cup \left\{ V^{(i)}_j : i \in [\ell], j \in [m] \right\}$$
occurs independently with probability $u$.

Let $L(m,u)$ denote the probability that there is no $L$-gap in $\E$. Then
$$\beta_{\ell+1}(u)^{m+1} \; \le \; L(m,u) \; \le \; \beta_{\ell+1}(u)^m,$$
where $\beta_{\ell+1}(u)$ is the function defined in the Introduction.
\end{lemma}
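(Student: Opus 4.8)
Throughout write $k=\ell+1$ and $\beta=\beta_{\ell+1}(u)$, and recall from the Introduction that $\beta^2=\big(1-(1-u)^k\big)\beta+u(1-u)^k$. The plan is to show that $L(m,u)$ obeys the \emph{same} two-term linear recurrence as $\{\beta^m\}_{m\ge0}$, namely
$$L(m+1,u)\;=\;\big(1-(1-u)^k\big)\,L(m,u)\;+\;u(1-u)^k\,L(m-1,u)\qquad(m\ge1),\qquad(\star)$$
with $L(0,u)=1$ and $L(1,u)=1-(1-u)^{k+1}$, and then to read off both inequalities by induction. Since the coefficients in $(\star)$ are non-negative and $\beta$ satisfies exactly this recurrence, the bounds will follow once we know the two elementary facts $1-(1-u)^{k+1}\le\beta\le1$.

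To prove $(\star)$, view $\E$ as a chain of $m$ ``cells'', the $i$-th cell being the tuple $\big(U_i,U_{i+1},V^{(1)}_i,\dots,V^{(\ell)}_i\big)$; an $L$-gap at cell $i$ is the event that all $\ell+2$ of these events fail. Consecutive cells share a single $U$-event, so I keep track of the last one: let $L^*(m,u)$ be the probability that there is no $L$-gap among cells $1,\dots,m$ \emph{and} $U_{m+1}$ occurs, so $L(0,u)=1$ and $L^*(0,u)=u$. Condition on $U_{m+2}$: if $U_{m+2}$ occurs (probability $u$) then cell $m+1$ automatically fails to be an $L$-gap and the remaining requirement is just ``no $L$-gap among cells $1,\dots,m$'', which already gives $L^*(m+1,u)=u\,L(m,u)$. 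If $U_{m+2}$ fails, then cell $m+1$ is safe iff $U_{m+1}$ occurs or some $V^{(i)}_{m+1}$ occurs; splitting on $U_{m+1}$ and using that the $V^{(i)}_{m+1}$ are independent of everything appearing in cells $1,\dots,m$, one obtains
$$L(m+1,u)\;=\;u\,L(m,u)\;+\;(1-u)\Big[L^*(m,u)+\big(1-(1-u)^\ell\big)\big(L(m,u)-L^*(m,u)\big)\Big].$$
Collecting terms turns the right-hand side into $\big(1-(1-u)^k\big)L(m,u)+(1-u)^k L^*(m,u)$; substituting $L^*(m,u)=u\,L(m-1,u)$ (valid for $m\ge1$) gives $(\star)$, and the same identity at $m=0$ (with $L^*(0,u)=u$) gives $L(1,u)=\big(1-(1-u)^k\big)+u(1-u)^k=1-(1-u)^{k+1}$.

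For the two facts about $\beta$: $\beta$ is the larger root of $h(x):=x^2-\big(1-(1-u)^k\big)x-u(1-u)^k$, the other root being negative since the product of the roots is $-u(1-u)^k<0$. A short computation gives $h(1)=(1-u)^{k+1}>0$ and $h\big(1-(1-u)^{k+1}\big)=-u(1-u)^{2k+1}<0$; since $h$ is an upward parabola, $1-(1-u)^{k+1}$ lies strictly between its two roots while $1$ lies above the larger one, so $1-(1-u)^{k+1}<\beta<1$. Consequently $\beta^2=\big(1-(1-u)^k\big)\beta+u(1-u)^k\le\big(1-(1-u)^k\big)+u(1-u)^k=1-(1-u)^{k+1}$. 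Now induct on $m$. Upper bound: $L(0,u)=1=\beta^0$, $L(1,u)=1-(1-u)^{k+1}\le\beta$, and if $L(m,u)\le\beta^m$ and $L(m-1,u)\le\beta^{m-1}$ then by $(\star)$, $L(m+1,u)\le\beta^{m-1}\big[\big(1-(1-u)^k\big)\beta+u(1-u)^k\big]=\beta^{m-1}\cdot\beta^2=\beta^{m+1}$. Lower bound: $L(0,u)=1\ge\beta$, $L(1,u)=1-(1-u)^{k+1}\ge\beta^2$, and if $L(m,u)\ge\beta^{m+1}$ and $L(m-1,u)\ge\beta^{m}$ then by $(\star)$, $L(m+1,u)\ge\beta^{m}\big[\big(1-(1-u)^k\big)\beta+u(1-u)^k\big]=\beta^{m}\cdot\beta^2=\beta^{m+2}$. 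This yields $\beta^{m+1}\le L(m,u)\le\beta^m$ for all $m\ge0$, i.e.\ $\beta_{\ell+1}(u)^{m+1}\le L(m,u)\le\beta_{\ell+1}(u)^m$.

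The only genuine care needed is in the derivation of $(\star)$: because adjacent cells overlap in one $U$-event one cannot condition on a cell in isolation, which is exactly why the auxiliary quantity $L^*$ is introduced. After that everything is mechanical — the two sign computations $h(1)>0$ and $h\big(1-(1-u)^{k+1}\big)<0$, and the observation that $L(m,u)$ and $\{\beta^m\}$ solve the same linear recurrence with non-negative coefficients.
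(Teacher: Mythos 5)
Your proof is correct, and it follows exactly the route the paper has in mind: the paper itself gives no argument here, simply citing Lemma~6 of the three-dimensional paper with the remark that it is ``easily proved by induction on $m$'', and your derivation of the two-term recurrence (via the auxiliary quantity tracking the shared event $U_{m+1}$) together with the observation that $\beta_{\ell+1}(u)$ is the larger root of the same characteristic equation is precisely that induction carried out in full. The verification of the base cases $1-(1-u)^{\ell+2}\le\beta_{\ell+1}(u)\le 1$ and $\beta_{\ell+1}(u)^2\le 1-(1-u)^{\ell+2}$ is also correct, so nothing is missing.
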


\begin{proof}[Proof of Lemma~\ref{blocked}]
Assume without loss that $\b_j = 1$, and let $\x(t) = \b + (t-1)e_j$ for each $t \in [k_j]$, so $E^{(j)}_\b = \{ \x(t) : t \in [k_j] \}$. For each $y \in [k_j]$, consider the following events:
\begin{align*}
F_1(y): & \; \textup{ $\M_{ \x(t) }$ is not an $L^+$-blocker of $E^{ (j) }_\b$ for each $1 \le t \le y-1$.}\\[+0.5ex]
F_2(y): & \; \textup{ $\M_{ \x(t) }$ is not an $L^-$-blocker of $E^{(j)}_\b$ for each $y+2 \le t \le k_j$.}
\end{align*}
Note that the events $F_1(y)$ and $F_2(y)$ are independent.

Suppose that $E^{(j)}_\b$ is not blocked. We claim there exists $y \in [k_j]$ such that $F_1(y)$ and $F_2(y)$ both hold; that is, there is no $L^+$-blocker $\M_{\x(t)}$ of $E^{(j)}_\b$ with $t < y$, and there is no $L^-$-blocker $\M_{\x(t)}$ of $E^{(j)}_\b$ with $t > y+1$. (To see this, simply take $y$ minimal such that $\M_{\x(y)}$ is an $L^+$-blocker of $E^{(j)}_\b$, or $k_j$ if there is no such blocker.)  

The lemma now follows from Lemma~\ref{L6}, applied to the events $U_{\x(t)}$ and $V_{\x(t)}^{(i)}$ for $t \in [k_j]$ and $j \neq i \in [\ell+1]$. Indeed, we have
$$\beta_{\ell+1}\big( u(S) \big)^{y+1} \; \le \; F_1(y) \; \le \; \beta_{\ell+1}\big( u(S) \big)^y,$$
and similarly for $F_2(y)$, and hence 
$$\Pr_p( E^{(j)}_\b \textup{ is not blocked} ) \; \le \; \sum_{y=1}^{k_j} \Pr_p\big(F_1(y) \wedge F_2(y) \big)  \; \le \; k_j \Big( \beta_{\ell+1}\big(u(S) \big) \Big)^{k_j -2},$$
as required. 

Finally, if $E^{(j)}_\b$ is not fully blocked then either the event $F_1\big( (k_j/3) - 1\big)$ or the event $F_2\big( (2k_j)/3 \big)$ occurs, and so
\begin{eqnarray*}
\Pr_p( E^{(j)}_\b \textup{ is not fully blocked} ) & \le & \Pr_p\Big( F_1\big( (k_j/3) - 1\big) \Big) \,+\, \Pr_p\Big(F_2\big( (2k_j)/3 \big) \Big) \\
& \le & 2 \Big( \beta_{\ell+1}\big(u(S) \big) \Big)^{k_j/3 - 2},
\end{eqnarray*}
as claimed.
\end{proof}

Given a rectangle $S \subset C([n]^{d-1} \times [k]^{\ell+1},1)$, a set $A \subset S$, and a direction $j \in [\ell+1]$, define the event
$$\X^S_j(A) \; := \; \big\{ \textup{there is a path in $[A \cap S]$ across $S$ in direction $d + j - 1$} \big\}.$$
The following upper bound on the probability of $\X^S_j(A)$ follows easily from Lemmas~\ref{detercross} and~\ref{blocked}. 

\begin{lemma}\label{probcross}
Let $B > 0$ and $d,k,\ell \in \N$, with $d \ge 2$. If $p > 0$ is sufficiently small then the following holds. Let $j \in [\ell+1]$, and let $\m \in \N^{d-1}$ and $\k \in \N^{\ell+1}$, with $m_i \le B/p^{1/(d-1)}$, $2\ell < k_j \le k/6$ and $k/2 \le k_i \le k$ for each $i \neq j$. Then
$$\Pr_p\big( \X^S_j(A) \big) \; \le \; 2^{\ell+1} k_j \Big( \beta_{\ell+1}\big(u(S) \big) \Big)^{k_j - 2},$$
where $S = C([\m] \times [\k],1)$ and $u(S) = u\big( \prod_{i = 1}^{d-1} m_i \big)$.
\end{lemma}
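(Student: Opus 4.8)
The plan is to derive the bound from a single application of the deterministic dichotomy in Lemma~\ref{detercross}, combined with a union bound in which the ``blocked'' and ``fully blocked'' alternatives are handled by Lemma~\ref{blocked} and the ``two nearby points'' alternative is treated separately. Throughout write $\beta := \beta_{\ell+1}(u(S))$, so $0 < \beta \le 1$. First I would apply Lemma~\ref{detercross} to the random set $A \cap S \sim \Bin(S,p)$, where $S = C([\m]\times[\k],1)$, taking the crossing direction to be $d+j-1$: on the event $\X^S_j(A)$, at least one of the following occurs — (a) $A\cap S$ contains two sites $x\neq y$ with $d_G(x,y)\le 2$; (b) some boundary edge $E^{(j)}_\b$, $\b\in\C(\k)$, is not blocked; (c) some boundary edge $E^{(i)}_\b$, $\b\in\C(\k)$ and $i\in[\ell+1]\setminus\{j\}$, is not fully blocked. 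Hence $\Pr_p(\X^S_j(A))$ is at most the sum of the probabilities of (a), (b) and (c), and it remains to estimate each.

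For (b): since $E^{(j)}_\b$ depends only on the $\ell$ coordinates $\b_i$ with $i\neq j$, there are exactly $2^{\ell}$ distinct such edges, and the first estimate of Lemma~\ref{blocked} says each is unblocked with probability at most $k_j\beta^{k_j-2}$; so (b) contributes at most $2^{\ell}k_j\beta^{k_j-2}$. For (c): there are $\ell\cdot 2^{\ell}$ distinct edges $E^{(i)}_\b$ with $i\neq j$, and the second estimate of Lemma~\ref{blocked} says each fails to be fully blocked with probability at most $2\beta^{k_i/3-2}$; since $k_i\ge k/2$ and $k_j\le k/6$ we have $k_i/3\ge k/6\ge k_j$, so (using $\beta\le1$) this is at most $2\beta^{k_j-2}$, and since $2\ell<k_j$ alternative (c) contributes at most $\ell\cdot 2^{\ell+1}\beta^{k_j-2}<2^{\ell}k_j\beta^{k_j-2}$ (indeed, once $k$ is much larger than $k_j$ this contribution is negligible). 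Thus (b) and (c) together are bounded by $2^{\ell+1}k_j\beta^{k_j-2}$, with room of order $\beta^{k_j-2}$ to spare.

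The remaining alternative (a) is the delicate point, and the step I expect to be the main obstacle: the crude estimate $\Pr_p(\text{(a)})\le C(d,\ell)\,|S|\,p^2$ only yields $O(p)$ (using $|S|\le B^{d-1}k^{\ell+1}/p$), which is too weak when $u(S)$ — equivalently $\prod_i m_i$ — is small, since then $\beta^{k_j-2}$ is only of order $p^{(k_j-2)/2}$. Instead I would bound $\Pr_p(\X^S_j(A)\cap(\text{a}))$ directly: fix the (for $p$ small, essentially unique) close pair $\{x,y\}$, and use that two occupied sheets $\M_x,\M_y$ can assist the bootstrap process only within a bounded neighbourhood of themselves, so that re-running the argument of Lemma~\ref{detercross} with $\{x,y\}$ ``deleted'' shows a crossing still forces some boundary edge $E^{(j)}_\b$ to fail to be blocked away from an $O(1)$-neighbourhood of $\{x,y\}$ — an event of probability at most $\beta^{k_j-O(1)}$ by the argument behind Lemma~\ref{blocked}. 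Summing over the $O(|S|)$ choices of $\{x,y\}$, each of probability $p^2$, bounds (a) by $O(|S|p^2)\,\beta^{k_j-O(1)}$; since $u(S)\ge 1-e^{-q}=p$ gives $\beta\ge c(\ell)\sqrt{p}$ for $p$ small, and $|S|p^2=O(p)$, a short computation shows this is, for $p$ sufficiently small depending on $B,d,k,\ell$, dominated by the slack left by (b) and (c). Adding the three contributions gives $\Pr_p(\X^S_j(A))\le 2^{\ell+1}k_j\beta^{k_j-2}=2^{\ell+1}k_j\big(\beta_{\ell+1}(u(S))\big)^{k_j-2}$, as claimed.
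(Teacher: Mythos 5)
Your decomposition is exactly the paper's: apply Lemma~\ref{detercross} to get the trichotomy, bound alternatives (b) and (c) via Lemma~\ref{blocked} with the edge counts $2^\ell$ and $2^\ell\ell$, and absorb (c) into the slack using $k_i/3\ge k/6\ge k_j>2\ell$. For those two alternatives your estimates coincide with the paper's and are correct. Where you diverge is alternative (a): the paper simply bounds it by $(2d+2\ell)^2|S|p^2=O(p)$ and absorbs it ``since $p$ was chosen sufficiently small''. Your objection — that $O(p)$ is not dominated by $\beta_{\ell+1}(u(S))^{k_j-2}$ when $\prod_i m_i$ is bounded, so that $\beta_{\ell+1}(u(S))\asymp\sqrt{p}$ — is a fair reading of the statement as written; note, however, that in the paper's only use of this lemma (Lemma~\ref{crossR}, invoked inside Lemma~\ref{DRR'}) one has $m_j\ge Z/p^{1/(d-1)}$ in every direction, so $u(S)$ and hence $\beta_{\ell+1}(u(S))$ are bounded below by constants and the crude $O(p)$ term really is negligible, which is all the paper needs.

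The difficulty is that your substitute treatment of (a) is not a proof. The central assertion — that after ``deleting'' the close pair, a crossing still forces some $E^{(j)}_\b$ to be unblocked outside an $O(1)$-neighbourhood of the pair — is a strengthened form of Lemma~\ref{detercross} allowing one exceptional pair, and it needs its own deterministic argument: the Claim inside the proof of Lemma~\ref{detercross} uses precisely the hypothesis that no vertex has two neighbours in $A$, so with a close pair present a first new infection may have threshold three (two neighbours in $A$ plus one in $\hat{S}$), and the stability of $\hat{S}\cup A$ must be re-established near the pair rather than cited. Moreover your closing ``short computation'' is sensitive to the unspecified constant: writing the loss as $\beta^{k_j-C}$, you need $O(|S|p^2)\,\beta^{k_j-C}\le 2^\ell\beta^{k_j-2}$, i.e.\ $O(p)\le 2^\ell\beta^{C-2}$, and with only $\beta\ge c\sqrt{p}$ available this closes for small $p$ only when $C<4$ (or $C=4$ with the right constants); if the modified deterministic argument loses more than a couple of slices near the pair, the computation does not go through as written. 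So either prove the modified lemma and quantify that constant (or use a better lower bound on $\beta$ in the regime where the pair matters), or impose the lower bound on $u(S)$ that the application actually supplies, in which case the paper's one-line treatment of (a) already suffices.
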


\begin{proof}
By Lemma~\ref{detercross}, if there is a path in $[A]$ across $S$ in direction $d + j - 1$, then either $A$ contains two sites within distance two, or one of the boundary edges in direction $j$ is not blocked, or one of the other boundary edges is not fully blocked. The probability that $A$ contains two sites at distance at most two is at most 
$$(2d + 2\ell)^2 |S| p^2 \; \le \; 4(d + \ell)^2 B^{d-1}k^{\ell+1} p \; = \; O(p).$$
There are $2^\ell$ boundary edges in direction $j$, so, by Lemma~\ref{blocked}, the probability that one of them is not blocked is at most
$$2^\ell k_j \Big( \beta_{\ell+1}\big(u(S) \big) \Big)^{k_j - 2}.$$
Finally, there are at most $2^\ell \ell$ boundary edges not in direction $j$, so the probability that one of them is not fully blocked is at most
$$2^{\ell+1} \ell \Big( \beta_{\ell+1}\big(u(S) \big) \Big)^{k/6 - 2},$$
by Lemma~\ref{blocked}, and since $k_i \ge k/2$ for every $i \neq j$. Since $p$ was chosen sufficiently small, and $k_j \le k/6$ and $k_j > 2\ell$, it follows that
$$\Pr\big( \X^S_j(A) \big) \; \le \; 2^{\ell+1} k_j \Big( \beta_{\ell+1}\big(u(S) \big) \Big)^{k_j - 2},$$
as required.
\end{proof}

We can now deduce Lemma~\ref{crossR} from Lemma~\ref{probcross}.

\begin{proof}[Proof of Lemma~\ref{crossR}]
The lower bound is straightforward, and follows by Lemma~7 of~\cite{d=r=3}, and the second inequality is immediate from the definition. We shall prove the upper bound. Let $R$ be a rectangle as described in the lemma, so $R \subset C([n]^d \times [k]^\ell,2)$ with $\dim(R) = (a_1, \ldots, a_d)$, where $a_i \le B/p^{1/(d-1)}$ for every $i \neq 1$. Recall that $B, \delta > 0$, and that $k \ge k_0(d,\ell,B,\delta)$ is sufficiently large. 

Let $A \sim \Bin(R,p)$, let $t \in \N$, and let $W \subset R$ with $|W| = t$. We are required to bound from above the probability that there is a path in $[A']$ across $R$ in direction $1$, where $A' \; = \; (A \cap R) \,\cup\, \{\x \,:\, x_1 \le a_1 - 1\} \cup W$.

Let $s = k/10$, $\m = (a_2,\ldots,a_d)$ and $\k = (s,k,\ldots,k) \in \N^{\ell+1}$, and assume for simplicity that $s$ divides  $a_1$. We partition $R$ into $M = a_1/s$ blocks $B_1, \ldots, B_M$, where $B_j \cong [\m] \times [\k]$ for each $j$, in the obvious way, i.e., so that $B_j$ is a translate of $[\m] \times [\k]$. Replace the thresholds in $B_j$ with those of $C([\m] \times [\k],1)$, and allow the bootstrap process to occur independently in each block. (By this, we mean that the blocks do not interact with each other.) Denote by $\{A\}(j)$ the closure of $A \cap B_j$ under this process, i.e., the closure of $A \cap B_j$ in the bootstrap structure $C([\m] \times [\k],1)$. 

Let $\{A\} = \bigcup_{j=1}^M \{A\}(j)$. The following claim shows that this is a coupling.\\[-1ex]

\noindent \ul{Claim}: $\{A\} \supset [A']$, where $[A']$ denotes the closure of $A'$ in $C([n]^d \times [k]^\ell,2)$.

\begin{proof}[Proof of claim]
Let $\x$ be a vertex of $B_j$, so $\x = (y_1,x_2,\ldots,x_d,y_2,\ldots,y_{\ell+1})$, where $x_j \in [m_j]$ and $y_j \in [k]$ for each $j \ge 2$, and $(j-1)s + 1 \le y_1 \le js$. Observe that $\x$ has at most one neighbour in $A' \setminus B_j$, since such a neighbour must differ from $\x$ in direction 1. Moreover, `internal' vertices of $B_j$ (those with $y_1 \not\in \{(j-1)s+1,js\}$) have no neighbours in $A'$ outside $B_j$. 

In the original system, $C([n]^d \times [k]^\ell,2)$, the threshold of vertex $\x$ was 
$$r(\x) \; = \; 2 \,+\, \left| \big\{ 2 \le j \le \ell + 1 : y_j \not\in \{1,k\} \big\} \right|.$$
In the coupled system, it is $r(\x) - 1 + I[y_1 \not\in \{(j-1)s+1,js\}]$. It follows that the threshold of no vertex has increased, and the threshold of those vertices which have a neighbour in $A'$ outside $B_j$ have decreased by one. Thus $\{A\} \supset [A']$, as claimed. 
\end{proof}

Let  $\J \subset [M]$ denote the set of indices $j$ such that $B_j \cap W \neq \emptyset$, and recall that $|\J| \le t$. Observe that, by the claim, if $R$ is left-to-right crossed by $A \cup W$ in direction 1, then the event $\X^{B_j}_1(A)$ occurs for each $j \in [M] \setminus \J$, i.e., there is a path in $\{A\} \cap B_j$ across $B_j$ in direction 1. Note, moreover, that the events $\X^{B_j}_1(A)$ for $j \in [M] \setminus \J$ are independent. Hence, by Lemma~\ref{probcross}, and recalling that $M = a_1/s$ and $|\J| \le t$,
\begin{align*}
& \Pr_p\Big( R\textup{ is left-to-right crossed by } A\text{ in direction }1 \: \big| \: W \subset A \Big) \; \le \; \prod_{j \in [M] \setminus \J} \Pr_p\big( \X^{B_j}_1(A) \big)\\
& \hspace{3.2cm} \; \le \;  \prod_{j \in [M] \setminus \J} 2^{\ell+1} s \Big( \beta_{\ell+1}\big(u_1(B_j) \big) \Big)^{s - 2} \; \le \; \Big( \beta_{\ell+1}\big(u_1(B_j) \big) \Big)^{(1 - \delta)a_1 - st},
\end{align*}
where $u_1(B_j) = u\big( \prod_{i = 2}^d a_i \big)$. In the final inequality we used the fact that $a_i \le B/p$ for each $i \neq 1$, so $\beta_{\ell+1}\big(u_1(B_j) \big)$ is bounded away from 1 (as a function of $B$, $d$ and $\ell$). Hence
$$\Big( 2^{\ell+1} s \Big)^{a_1/s} \Big( \beta_{\ell+1}\big(u_1(B_j) \big) \Big)^{- 2a_1/s} \; \le \; \Big( \beta_{\ell+1}\big(u_1(B_j) \big) \Big)^{-\delta a_1}$$
since $s = k/10$ is sufficiently large. This proves Lemma~\ref{crossR}.
\end{proof}

\section{Analytic tools}\label{analsec}

In this section we shall extend the analytic tools used by Holroyd~\cite{Hol} to the $d$-dimensional setting. We remark that the results of this section, together with the method of~\cite{Hol}, are sufficient to prove Theorem~\ref{sharp} in the case $r = 2$.  

The following line integral was introduced in~\cite{Hol} in the case $d = 2$. Let $\RR_+$ denote the (strictly) positive reals. Given any function $f : \RR_+ \to \RR_+$, and $\a,\b \in \RR^d_+$, define
$$W_f(\mathbf{a},\mathbf{b}) \; := \; \ds\inf_{\gamma \,:\, \mathbf{a} \to \mathbf{b}} \int_\gamma \bigg( \sum_j f\Big( \prod_{i \neq j} x_i \Big) \, \textup{d}x_j  \bigg),$$
where the infimum is taken over all piecewise linear, increasing paths from $\mathbf{a}$ to $\mathbf{b}$ in $\RR^d_+$ (see Section 6 of \cite{Hol}). Moreover, for any two rectangles $R \subset R' \subset [n]^d \times [k]^\ell$, let
\begin{equation}\label{Udef}
U_f(R,R') \; = \; W_f\Big( p^{1/(d-1)} \dim(R), p^{1/(d-1)} \dim(R') \Big).
\end{equation}
The aim of this section is to prove the following two propositions, which will allow us to deduce Theorem~\ref{r=2} from Lemmas~\ref{basic} and~\ref{crossR}. The first is a generalization of Lemma~37 of~\cite{Hol}.

\begin{prop}\label{pod}
Let $n,d,k,\ell \in \N$, let $\hat{T},\hat{Z},p > 0$, and let $Z = \hat{Z} \cdot p^{1/(d-1)}$. For any hierarchy $\HH$ of a rectangle $R \subset C([n]^d \times [k]^\ell,2)$ which is good for $(\hat{T},\hat{Z})$, there exists a rectangle $S = S(\HH) \subset R$, with
$$\phi(S) \; \le \; \sum_{\textup{seeds }u} \phi(R_u),$$
such that
$$\sum_{N_{G_\HH}^\->(u) = \{v\}} U_{g_{\ell+1}}(R_v,R_u) \; \ge \; U_{g_{\ell+1}}(S,R) \: - \: \Big(d\, p^{1/(d-1)} g_{\ell+1}(Z) \Big)\left| \left\{ u \in \HH : |N_{G_\HH}^\->(u)| \ge 2 \right\} \right|.$$
\end{prop}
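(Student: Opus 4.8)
The plan is to build the rectangle $S = S(\HH)$ recursively, traversing the tree $G_\HH$ from the leaves (seeds) upward, and to maintain along the way a lower bound on the sum of the $U_{g_{\ell+1}}$-costs of the pendant (single-child) edges in terms of $U_{g_{\ell+1}}(S_u, R_u)$ for the sub-rectangle $S_u$ associated to the subtree rooted at $u$. At a seed $u$ we simply set $S_u = R_u$, so there is nothing to prove since the pendant-edge sum over an empty edge set is $0 = U_{g_{\ell+1}}(R_u,R_u)$ and the seed contributes its full $\phi(R_u)$ to the eventual semi-perimeter bound.

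The inductive step splits into two cases according to the out-degree of $u$. If $N_{G_\HH}^\->(u) = \{v\}$ is a single child, then by the good-hierarchy conditions $(g)$--$(h)$ we have $\phi(R_u) - \phi(R_v) \le 2\hat T$ (and we do not even need the lower bound), and we set $S_u = S_v$. The key point is the triangle-type inequality for the functional $U_{g_{\ell+1}}$: since $S_v \subset R_v \subset R_u$, the infimum over increasing piecewise-linear paths from $p^{1/(d-1)}\dim(S_v)$ to $p^{1/(d-1)}\dim(R_u)$ is at most the cost of concatenating an optimal path to $p^{1/(d-1)}\dim(R_v)$ with an optimal path from there to $p^{1/(d-1)}\dim(R_u)$, giving
$$U_{g_{\ell+1}}(S_v, R_u) \; \le \; U_{g_{\ell+1}}(S_v, R_v) + U_{g_{\ell+1}}(R_v, R_u).$$
Combining with the inductive bound at $v$ and the fact that the new pendant edge $(u,v)$ contributes exactly $U_{g_{\ell+1}}(R_v,R_u)$ to the left-hand sum yields the desired inequality at $u$, with no new error term incurred.

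The real work is the branching case $|N_{G_\HH}^\->(u)| = t \ge 2$, with children $v_1,\dots,v_t$; here condition $(d)$ gives $\<R_{v_1}\cup\dots\cup R_{v_t}\> = R_u$, and by $(j)$ each $R_{v_i}$ has short side $> \hat Z$, so $p^{1/(d-1)}\sht(R_{v_i}) > Z$. I would first combine the $t$ sub-rectangles $S_{v_1},\dots,S_{v_t}$ into a single rectangle $S_u$ — e.g. by taking a suitable "merged" rectangle whose dimensions are dominated by a sum-type combination of the $\dim(S_{v_i})$ — so that $\phi(S_u) \le \sum_i \phi(S_{v_i})$, which keeps the seed-sum budget intact. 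Then I need a geometric lemma about $W_{g_{\ell+1}}$ (equivalently about $U_{g_{\ell+1}}$) saying that replacing a union of spanned rectangles by the rectangle they span costs at most $W_{g_{\ell+1}}(p^{1/(d-1)}\dim(S_u), p^{1/(d-1)}\dim(R_u)) \le \sum_i W_{g_{\ell+1}}(p^{1/(d-1)}\dim(S_{v_i}), p^{1/(d-1)}\dim(R_u))$ up to an additive error; the error arises because $R_u$ need not contain each $R_{v_i}$ in the coordinatewise-dominance sense needed to run a purely increasing path, and one must "pay" to move each $R_{v_i}$ out to a common corner. Since each $R_{v_i}$ has all side-lengths at least $Z/p^{1/(d-1)}$ after rescaling, the correction along each of the $d$ coordinate directions is at most an integral of $g_{\ell+1}$ near its argument-$Z$ value, i.e. at most $d\,p^{1/(d-1)} g_{\ell+1}(Z)$ per branching vertex — this is exactly the error term in the statement. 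The main obstacle, then, is proving this $W_{g_{\ell+1}}$ superadditivity-with-error estimate for merging several rectangles: it requires carefully choosing the concatenated path (first route each $S_{v_i}$-to-$R_u$ path, then account for the overlap/translation), and using monotonicity and the explicit behaviour of $g_{\ell+1}$ — in particular that $g_{\ell+1}$ is decreasing and bounded near its small-argument regime — to control the excess by $g_{\ell+1}(Z)$ times the number of directions. Once this lemma is in hand, summing the per-vertex errors over all branching vertices and unwinding the recursion to the root $R$ gives the proposition, with $\phi(S) = \phi(S_{\text{root}}) \le \sum_{\text{seeds }u}\phi(R_u)$.
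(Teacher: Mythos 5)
Your overall architecture is the right one, and it is the same as the paper's: a leaf-to-root induction over $G_\HH$ maintaining a pod $S_u$ for each subtree, with seeds giving $S_u=R_u$, single-child vertices handled by the concatenation (triangle) inequality $U_{g_{\ell+1}}(S_v,R_u)\le U_{g_{\ell+1}}(S_v,R_v)+U_{g_{\ell+1}}(R_v,R_u)$, an error incurred only at branching vertices, and the semi-perimeter bookkeeping $\phi(S_u)\le\sum_i\phi(S_{v_i})$; this is precisely Holroyd's Lemma~37 induction, which the paper also quotes.

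The branching case, however, is where the substance lies, and there your proposal has a genuine gap. First, the merging lemma you state does not interface with your own induction: you compare $U_{g_{\ell+1}}(S_u,R_u)$ with $\sum_i W_{g_{\ell+1}}\big(p^{1/(d-1)}\dim(S_{v_i}),\,p^{1/(d-1)}\dim(R_u)\big)$, but the inductive hypothesis only controls $\sum_i U_{g_{\ell+1}}(S_{v_i},R_{v_i})$, and at a branching vertex there is no pendant edge whose cost pays for growing $R_{v_i}$ into $R_u$; since $U_{g_{\ell+1}}(S_{v_i},R_u)$ can exceed $U_{g_{\ell+1}}(S_{v_i},R_{v_i})$ by a macroscopic amount, the recursion does not close with the inequality as written. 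What is needed is $\sum_i U_{g_{\ell+1}}(S_{v_i},R_{v_i})\;\ge\;U_{g_{\ell+1}}(S_u,R_u)-d\,p^{1/(d-1)}g_{\ell+1}(Z)$, i.e.\ a $d$-dimensional version of Proposition~15 of~\cite{Hol}. Second, your diagnosis of the error term is wrong: condition $(c)$ gives $R_{v_i}\subset R_u$, so coordinatewise dominance holds and nothing has to be ``moved out to a corner''; the error comes from the quantitative consequence of condition $(d)$, namely that $\dim(R_u)$ may exceed $\sum_i\dim(R_{v_i})$ by $(1,\ldots,1)$ per merge (spanning slack), and the cost of that extension is controlled by $g_{\ell+1}$ at scale $Z$ because the relevant side-lengths exceed $\hat Z$ by condition $(j)$. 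Third, the analytic inputs that make the correct merging lemma work are never identified: superadditivity of $W_f$ under Minkowski sums of the endpoints (valid because $f$ is decreasing), and the monotonicity $W_f(\mathbf{a},\mathbf{c})\ge W_f(\mathbf{b},\mathbf{c})$ for $\mathbf{a}\le\mathbf{b}\le\mathbf{c}$, i.e.\ Lemma~\ref{Wincr}. The latter is the one ingredient that requires a genuinely new argument for $d\ge 3$ (the paper proves it by a projection argument precisely so that Holroyd's Proposition~15 and Lemma~37 carry over), and since you explicitly leave the merging estimate as ``the main obstacle'' without these ingredients, the branching step remains unproved.
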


The rectangle $S(\HH)$ is called the \emph{pod} of the hierarchy $\HH$. In order to understand this statement, ignore the final (error) term, and observe that the lemma gives us a lower bound on the sum a large number of small line integrals (which correspond to events $D(R_v,R_u)$ in the hierarchy). 

The next result, which is a generalization of Proposition~14 of~\cite{Hol}, shows that, if there are not too many big seeds, then this lower bound is exactly what we want. It will follow from the fact that the line integral $W_{g_\ell}(\a,\b)$ is minimized by following the main diagonal as closely as possible. 

Given a vector $\x \in \RR^d$, we shall write $\Delta(\x) = \max_j \{x_j\}$. Given two vectors $\a,\b \in \RR^d$, we shall write $\a \le \b$ if $a_j \le b_j$ for each $j \in [d]$, and $\a < \b$ if $a_j < b_j$ for every $j \in [d]$.

\begin{prop}\label{minW}
Let $d,\ell \in \N$, and let $\a,\b \in \RR^d_+$, with $\a \le \b$ and $\min_j \{b_j\} = b_i$. Then
$$W_{g_\ell}(\mathbf{a},\mathbf{b}) \; \ge \; d \int_{\Delta(\a)}^{\Delta(\b)} g_\ell(z^{d-1}) \,\dz \; - \; d \Delta(\b) g_\ell\Big(  \prod_{i \neq j} b_j \Big).$$
\end{prop}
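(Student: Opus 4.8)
The plan is to prove Proposition~\ref{minW} by first reducing to the case where the infimizing path is monotone along the main diagonal, and then performing a change of variables to reduce the $d$-dimensional line integral to a one-dimensional integral that can be bounded term-by-term. Recall $W_{g_\ell}(\a,\b) = \inf_\gamma \int_\gamma \big( \sum_j g_\ell(\prod_{i\neq j} x_i)\,\textup{d}x_j\big)$ over piecewise-linear increasing paths $\gamma : \a \to \b$. The first step is to note that, since the integrand is a sum of positive terms and $g_\ell$ is decreasing (this is part of the `Properties of $\beta$ and $g$', which I may invoke), increasing any coordinate $x_i$ for $i \neq j$ only decreases the factor $g_\ell(\prod_{i\neq j}x_i)$ multiplying $\textup{d}x_j$; hence pushing the path towards the diagonal can only help, but I must be careful because increasing $x_i$ also costs an additional $\textup{d}x_i$ term. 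So a cleaner route: along any increasing path $\gamma$, parametrise by $t \in [0,1]$, and bound each summand $g_\ell(\prod_{i\neq j}x_i(t))\, \dot x_j(t) \ge g_\ell\big( \Delta(\x(t))^{d-1} \big)\, \dot x_j(t)$, using that $\prod_{i\neq j} x_i(t) \le \Delta(\x(t))^{d-1}$ and $g_\ell$ is decreasing. Summing over $j$ gives $\int_\gamma(\cdots) \ge \int_0^1 g_\ell\big(\Delta(\x(t))^{d-1}\big) \sum_j \dot x_j(t)\,\textup{d}t$.

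The second step handles the quantity $\int_0^1 g_\ell(\Delta(\x(t))^{d-1}) \sum_j \dot x_j(t)\,\textup{d}t$. Since the path is increasing, $\Delta(\x(t)) = \max_j x_j(t)$ is itself an increasing, piecewise-linear function of $t$, and $\sum_j \dot x_j(t) \ge \dot\Delta(\x(t)) =: \dot s(t)$ wherever $\Delta$ is differentiable (at a time $t$ the coordinate achieving the max has derivative at most $\sum_j \dot x_j$, and $\Delta$ changes at the rate of that coordinate, or slower at a corner). Thus $\int_0^1 g_\ell(\Delta^{d-1})\sum_j \dot x_j\,\textup{d}t \ge \int_0^1 g_\ell(s(t)^{d-1})\,\dot s(t)\,\textup{d}t = \int_{\Delta(\a)}^{\Delta(\b)} g_\ell(s^{d-1})\,\textup{d}s$ by the substitution $s = \Delta(\x(t))$. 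This already gives the bound $W_{g_\ell}(\a,\b) \ge \int_{\Delta(\a)}^{\Delta(\b)} g_\ell(s^{d-1})\,\textup{d}s$, but we need the factor $d$ out front, at the cost of the error term $d\Delta(\b) g_\ell(\prod_{i\neq j} b_j)$.

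To get the factor $d$, I will sharpen the first step. Instead of bounding all $d$ summands by the same quantity, observe that when $\x(t)$ is near the diagonal, at least one coordinate (the minimal one, which near the end is $x_i$) is only a bounded factor below $\Delta(\x(t))$, so actually $\prod_{m\neq j} x_m(t)$ is comparable to $\Delta(\x(t))^{d-1}$ for all $j$ simultaneously, giving roughly $d$ copies of $g_\ell(\Delta^{d-1})\dot\Delta$. More precisely, I expect the argument to run: bound $\int_\gamma(\cdots) \ge \sum_j \int g_\ell(\prod_{m\neq j} x_m)\,\dot x_j\,\textup{d}t$ and note $\prod_{m\neq j}x_m \le \Delta^{d-1}$ always, so each term is $\ge \int g_\ell(\Delta^{d-1})\dot x_j\,\textup{d}t$; summing, $\ge \int g_\ell(\Delta^{d-1})\big(\sum_j \dot x_j\big)\textup{d}t$. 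Now I want to replace $\sum_j \dot x_j$ by $d\,\dot\Delta$, which is false in general but true up to controlling the `off-diagonal excess' $\sum_j x_j - d\Delta$; since $\x(1) = \b$ has $\sum_j b_j - d b_i \le d\Delta(\b) - d b_i$... actually the discrepancy is exactly the content of the error term. The cleanest implementation: write $\sum_j \dot x_j = d\dot\Delta - \frac{\textup{d}}{\textup{d}t}\big(d\Delta - \sum_j x_j\big)$, integrate by parts against $g_\ell(\Delta^{d-1})$, bound the boundary term at $t=1$ by $d\Delta(\b)g_\ell(\Delta(\b)^{d-1}) \le d\Delta(\b)g_\ell(\prod_{m\neq j} b_j)$ and the one at $t=0$ favorably (it has the right sign), and control the remaining integral of $g_\ell'$ against the nonnegative quantity $d\Delta - \sum x_j$ using monotonicity. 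The main obstacle will be making this integration-by-parts / accounting argument rigorous for piecewise-linear paths where $\Delta$ has corners, and pinning down exactly which factor $\prod_{m\neq j} b_j$ (i.e., which index $j$) appears in the error term so that it matches the statement; Holroyd's Proposition~14 in $d=2$ is the model, and the generalization is bookkeeping-heavy but not conceptually hard. I would also need to double-check the edge cases $\Delta(\a) = \Delta(\b)$ and degenerate rectangles, where the bound should hold trivially.
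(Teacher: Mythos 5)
There is a genuine gap, and it sits exactly where you hoped the work was only bookkeeping. Your step 1 replaces every factor $g_\ell\big(\prod_{m\neq j}x_m\big)$ by $g_\ell\big(\Delta(\x(t))^{d-1}\big)$, and your final step then tries to convert $\sum_j\dot x_j$ into $d\,\dot\Delta$ at the cost of a boundary term; so your plan requires, for \emph{every} increasing path from $\a$ to $\b$,
$$\int_0^1 g_\ell\big(\Delta(\x(t))^{d-1}\big)\sum_j\dot x_j(t)\,\textup{d}t \;\ge\; d\int_{\Delta(\a)}^{\Delta(\b)}g_\ell(z^{d-1})\,\dz\;-\;d\,\Delta(\b)\,g_\ell\Big(\prod_{j\neq i}b_j\Big).$$
This inequality is false. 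Take $d=2$, $\a=(\eps,\eps)$, $\b=(\eps,B)$ with $\eps$ fixed and $B$ large: the only increasing path has $x_1\equiv\eps$, so the left-hand side equals $\int_\eps^B g_\ell(z)\,\dz\le\int_\eps^\infty g_\ell(z)\,\dz$, while the right-hand side is $2\int_\eps^B g_\ell(z)\,\dz-2Bg_\ell(B)\to 2\int_\eps^\infty g_\ell(z)\,\dz$ as $B\to\infty$, since $g_\ell$ decays exponentially. Concretely, in your integration by parts the term $-\int_0^1\tfrac{\textup{d}}{\textup{d}t}\big[g_\ell(\Delta^{d-1})\big]\,E\,\textup{d}t$ with $E=d\Delta-\sum_jx_j\ge 0$ is nonnegative and, in this example, of constant order, so it cannot be absorbed into the error term, which tends to $0$. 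The proposition survives this example only because the true weight of that path is the huge quantity $(B-\eps)\,g_\ell(\eps)$ --- which is precisely the information your pointwise relaxation discards: replacing $g_\ell(\prod_{m\neq j}x_m)$ by $g_\ell(\Delta^{d-1})$ deletes the penalty paid by paths that keep some coordinate small, and that penalty is what makes the factor $d$ correct. A structural sign that no repair along these lines exists: your argument uses only that $g_\ell$ is decreasing and continuous, but the statement fails for general decreasing continuous $f$ (take $f$ a smoothed step function, $1$ on $[0,1]$ and $0$ beyond $1+\delta$, with $\a=(\eta,\eta)$, $\b=(M,M)$: the path that first raises $x_1$ past $1+\delta$ and only then moves $x_2$ has weight about $1$, while the claimed bound is about $2$). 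Convexity of $g_\ell$ must enter somewhere.

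For comparison, the paper's proof gets the factor $d$ and the error term from two separate moves. First it enlarges $\b$ to $\b'=(\Delta(\b),\ldots,\Delta(\b))$ and uses $W_f(\a,\b')\le W_f(\a,\b)+W_f(\b,\b')$ together with the trivial bound $W_f(\b,\b')\le d\,\Delta(\b)\,f\big(\prod_{j\neq i}b_j\big)$; this corner-completion is the sole source of the error term, and after it every path must move \emph{every} coordinate up to $\Delta(\b)$, which is the legitimate origin of the factor $d$ (your counterexample path, which never moves $x_1$, is exactly what this step rules out). Then $W_f(\a,\b')$ is bounded below by discretizing a near-optimal axis-parallel path (Observation~\ref{apppath}) and using the convexity-based exchange Lemma~\ref{Wswitch} to permute its linear pieces into a path hugging the diagonal without increasing the weight; the near-diagonal path costs, up to $\eps$, $d\int_{\Delta(\a)}^{\Delta(\b)}g_\ell(z^{d-1})\,\dz$. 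If you wish to keep a path-by-path formulation you would have to retain the true factors $g_\ell(\prod_{m\neq j}x_m)$ in the directions where coordinates are small and use convexity to compare off-diagonal with diagonal costs --- at which point you have essentially reconstructed the exchange argument.
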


\begin{rmk}
We shall use the following simple properties of the function $g_k(z)$ defined in the introduction: $g_k(u)$ is decreasing, convex and continuous, and $g_k(z) \le 2e^{-kz}$ if $z$ is sufficiently large. Note that either we have $\prod_{i \neq j} b_j \le \log\big( d \Delta(\b) \big)$, or $d \Delta(\b) g_k \big(  \prod_{i \neq j} b_j \big) \to 0$ as $\Delta(\b) \to \infty$.
\end{rmk}

We shall prove Propositions~\ref{pod} and~\ref{minW} using a discretization argument. Given a function $f$ and a path $\gamma$ in $\RR^d_+$, we shall write 
$$w_f(\gamma) \; := \; \int_\gamma \bigg( \sum_j f\Big( \prod_{i \neq j} x_i \Big) \, \textup{d} x_j  \bigg),$$
so that $W_f(\a,\b)= \ds\inf_{\gamma \,:\, \mathbf{a} \to \mathbf{b}} w_f(\gamma)$. We begin with a simple observation.

\begin{obs}\label{apppath}
Let $f : \RR_+ \to \RR_+$ be continuous, and let $\a,\b \in \RR^d_+$ with $\0 < \a \le \b$. For every $\eps > 0$, there exists a piecewise linear, increasing path $\gamma_\eps$ from $\a$ to $\b$, with each linear piece parallel to one of the axes, and all of equal length, such that 
$$w_f\big( \gamma_\eps \big) \; \le \; W_f(\a,\b) + \eps.$$
\end{obs}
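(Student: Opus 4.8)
The plan is to prove Observation~\ref{apppath} by a two-stage approximation: first replace an arbitrary optimal (or near-optimal) path by a piecewise linear increasing path with axis-parallel pieces, then chop those pieces into segments of a common small length. For the first stage, fix $\eps > 0$ and choose a piecewise linear increasing path $\gamma$ from $\a$ to $\b$ with $w_f(\gamma) \le W_f(\a,\b) + \eps/2$; such a $\gamma$ exists by the definition of $W_f$ as an infimum over piecewise linear increasing paths. The issue is that the linear pieces of $\gamma$ need not be axis-parallel. To fix this, I would replace each oblique linear piece, from a point $\x$ to a point $\x'$ with $\x \le \x'$, by the monotone ``staircase'' path that moves successively in each coordinate direction; since $f$ is continuous and hence bounded on the compact box $[\a,\b]$, and since each $f\big(\prod_{i\neq j}x_i\big)$ is monotone along the relevant coordinate moves, the contribution $\int \sum_j f(\prod_{i\neq j}x_i)\,\mathrm{d}x_j$ changes by an amount that can be made arbitrarily small by subdividing the oblique piece into many short sub-pieces before replacing each by a tiny staircase (the error per cube is $O(\mathrm{osc}(f)\cdot(\text{mesh}))$ and the total number of cubes times mesh stays bounded). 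This yields an increasing, axis-parallel, piecewise linear path $\gamma'$ with $w_f(\gamma') \le W_f(\a,\b) + 3\eps/4$.

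For the second stage, I would take the path $\gamma'$, whose pieces are axis-parallel segments of various lengths, and refine it so all pieces have a common length. Pick a small $h > 0$ that divides (up to negligible remainder) each of the finitely many segment lengths of $\gamma'$; then subdividing a segment into sub-segments of length $h$ does not change $w_f$ at all, because the integral over a segment equals the sum of the integrals over its sub-segments (the integrand depends only on the point, not on how the segment is partitioned). Handling the leftover remainders requires either choosing $h$ to be a common rational submultiple of the lengths, or allowing one slightly shorter final piece per original segment and absorbing the mismatch; the cleanest route is to first perturb the vertices of $\gamma'$ slightly (again changing $w_f$ by at most $\eps/4$, using uniform continuity of $f$ and boundedness of the domain) so that all segment lengths become commensurable, and then pick $h$ to be their greatest common submultiple. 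This produces $\gamma_\eps$ with all linear pieces parallel to an axis, increasing, of equal length, and $w_f(\gamma_\eps) \le W_f(\a,\b) + \eps$.

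The main obstacle is the first stage: converting an oblique monotone linear piece into an axis-parallel monotone path while controlling $w_f$. The key point making this work is that $f$ is continuous on $\RR_+$, hence uniformly continuous on the compact interval $\big[\min_j\prod_{i\neq j}a_i,\ \max_j\prod_{i\neq j}b_j\big]$ of values taken by $\prod_{i\neq j}x_i$ along any path in $[\a,\b]$, so the oscillation of each $f\big(\prod_{i\neq j}x_i\big)$ over a small sub-box is uniformly small; combined with the fact that the total variation in each coordinate along any increasing path from $\a$ to $\b$ is exactly $b_j - a_j$, the accumulated error is bounded by (total coordinate displacement) times (oscillation of $f$ over mesh-size boxes), which tends to $0$ as the mesh does. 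I would record this estimate carefully since it is the one nonroutine computation; everything else (subdivision, commensurability adjustment) is bookkeeping. Note also that we stay away from $\mathbf 0$ throughout since $\a > \mathbf 0$ and the path is increasing, so $f$ is only ever evaluated on a compact subset of $\RR_+$ and all the continuity arguments are legitimate.
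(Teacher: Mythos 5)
The paper gives no proof of Observation~\ref{apppath} (it is treated as immediate), so your argument has to stand on its own. Your first stage is correct and is the only substantive content: take a piecewise linear increasing path $\gamma$ with $w_f(\gamma)\le W_f(\a,\b)+\eps/2$, replace each oblique piece by a fine staircase, and bound the change in $w_f$ by the oscillation of $f$ over mesh-size boxes times the total coordinate displacement $\sum_j(b_j-a_j)$; since any increasing path from $\a$ to $\b$ stays in $[\a,\b]$ and $\a>\0$, the arguments $\prod_{i\ne j}x_i$ range over a compact subset of $\RR_+$ on which $f$ is uniformly continuous, so the error tends to $0$ with the mesh. (Your aside that $f\big(\prod_{i\neq j}x_i\big)$ is ``monotone along the relevant coordinate moves'' is unjustified --- $f$ is only continuous here --- but your oscillation estimate never uses it.)

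The gap is in the second stage. Perturbing the interior vertices of an axis-parallel increasing path from $\a$ to $\b$ can never make the segment lengths commensurable in general: whatever the perturbation, the pieces in direction $j$ have lengths summing to exactly $b_j-a_j$, so a common submultiple $h$ of all piece lengths would force every $b_j-a_j$ to be an integer multiple of $h$, i.e.\ all ratios $(b_j-a_j)/(b_i-a_i)$ to be rational. When these differences are incommensurable no such path exists at all, so the ``greatest common submultiple'' route is impossible --- and indeed the observation as literally stated (all pieces of exactly equal length, path ending exactly at $\b$) fails for such $\a,\b$. The workable version is the option you mention only in passing: all pieces of length exactly $h$ except at most one shorter leftover piece per direction (equivalently, all pieces of length at most $h$). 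That weaker statement is what the subsequent arguments actually need: the equal lengths play no role in the proof of Lemma~\ref{Wincr}, and in Proposition~\ref{minW} the at most $d$ exceptional pieces can be extended to full length $h$ (moving the endpoint up by at most $h$ in each coordinate and changing $w_f$ by at most $dh$ times the maximum of $f$ on the relevant compact set), after which the swapping argument of Lemma~\ref{Wswitch} applies verbatim and the error is absorbed into $\eps$ as $h\to 0$. You should state and prove that corrected form rather than assert the commensurability perturbation.
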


The following lemma is a generalization of Lemma~18 of~\cite{Hol}.

\begin{lemma}\label{Wincr}
Let $f: \RR_+ \to \RR_+$ be continuous and decreasing, and let $\a,\b,\c \in \RR^d_+$ with $ \a \le \b \le \c$. Then
$$W_f(\a,\c) \; \ge \; W_f(\b,\c).$$
\end{lemma}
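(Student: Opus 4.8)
The plan is to prove the monotonicity statement $W_f(\a,\c) \ge W_f(\b,\c)$ directly by taking a near-optimal path from $\a$ to $\c$ and constructing from it a path from $\b$ to $\c$ whose weight is no larger. Fix $\eps > 0$; by Observation~\ref{apppath} there is a piecewise linear increasing path $\gamma$ from $\a$ to $\c$, with each linear piece parallel to a coordinate axis, such that $w_f(\gamma) \le W_f(\a,\c) + \eps$. The idea is to ``push'' $\gamma$ forward past the box $[\a,\b]$: define a map $T : [\a,\c] \to [\b,\c]$ coordinatewise by $T_j(x) = \max\{x_j, b_j\}$, and let $\gamma' = T \circ \gamma$. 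Since each coordinate of $T$ is a nondecreasing $1$-Lipschitz function of the corresponding coordinate, $\gamma'$ is again a piecewise linear increasing path (with axis-parallel pieces), it starts at $T(\a) = \b$ (because $\a \le \b$) and ends at $T(\c) = \c$ (because $\b \le \c$).

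The key point is then to check that $w_f(\gamma') \le w_f(\gamma)$. We compare the two integrals piece by piece: for a linear piece of $\gamma$ in direction $j$, running from $\x$ to $\x + s e_j$, its image under $T$ is a linear piece in direction $j$ running from $T(\x)$ to $T(\x + s e_j)$, of length $s' \le s$ (it is $s$ if $x_j \ge b_j$ throughout, shorter or zero if the piece lies partly or wholly below $b_j$). Along this image piece the coordinates $x_i$ for $i \ne j$ are replaced by $\max\{x_i, b_i\} \ge x_i$, so since $f$ is decreasing, $f\big(\prod_{i \ne j} (\cdot)_i\big)$ is no larger on the image piece than the corresponding value on the original piece; combined with $s' \le s$, the contribution of each piece does not increase. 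Summing over pieces gives $w_f(\gamma') \le w_f(\gamma) \le W_f(\a,\c) + \eps$, and since $\gamma'$ joins $\b$ to $\c$ we get $W_f(\b,\c) \le W_f(\a,\c) + \eps$. Letting $\eps \to 0$ finishes the proof.

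The one technical nuisance — and the step I'd expect to need the most care — is making the piecewise comparison rigorous when a linear piece of $\gamma$ straddles the hyperplane $\{x_j = b_j\}$ (so part of it is below and part above), or when pieces in directions $i \ne j$ cause coordinates to cross their thresholds $b_i$ midway through a $j$-piece. One clean way to sidestep this is a preprocessing step: by subdividing the linear pieces of $\gamma_\eps$ further (which does not change $w_f$), we may assume that along each linear piece every coordinate is either entirely $\le b_i$ or entirely $\ge b_i$; this is possible since $\gamma$ has finitely many pieces and each is monotone, so only finitely many threshold-crossings occur, and we just insert breakpoints there. After this refinement the coordinatewise bound $f\big(\prod_{i\ne j}\max\{x_i,b_i\}\big) \le f\big(\prod_{i\ne j} x_i\big)$ holds with a constant value of each factor on each piece, and the length bound $s' \le s$ is immediate, so the per-piece inequality is transparent. (Alternatively, one can invoke continuity of $w_f$ under the relevant limit, but the subdivision argument is more self-contained.) Everything else is routine.
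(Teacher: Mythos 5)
Your proof is correct, but it takes a genuinely different (and more direct) route than the paper. The paper proves Lemma~\ref{Wincr} by induction on $d$: it takes a near-optimal axis-parallel path $\gamma_\eps$ from $\a$ to $\c$, locates the first point $\v$ on it with $\v \ge \b$ (so $\v_j = b_j$ for some $j$), projects the initial segment of $\gamma_\eps$ onto the hyperplane $x_j = b_j$ --- this is where ``$f$ decreasing'' is used, since the projection kills the $j$-pieces and only raises the $j$-coordinate along the others --- and then invokes the induction hypothesis inside that $(d-1)$-dimensional hyperplane to replace the projected start point by $\b$, before concatenating with the tail of $\gamma_\eps$. Your single coordinatewise truncation $T(\x) = (\max\{x_1,b_1\},\ldots,\max\{x_d,b_d\})$ applied to the whole path accomplishes all of this in one step: it uses exactly the same two ingredients (other coordinates only increase, so the factor $f(\prod_{i\ne j}\cdot)$ does not increase; pieces only shorten), but it dispenses with the induction, with the bookkeeping around the point $\v$, and with the accumulation of extra $\eps$'s from repeatedly passing to near-optimal paths. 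Your handling of the technical point is also sound, and in fact slightly more than is needed: along a linear piece in direction $j$ the coordinates $x_i$, $i \ne j$, are constant, so the only breakpoints you must insert are where the $j$-th coordinate crosses $b_j$ (pieces entirely below $b_j$ collapse to points of zero weight, pieces above map isometrically); the image path, after discarding degenerate pieces, is a legitimate piecewise linear increasing competitor from $\b$ to $\c$, which is all the infimum requires. In short, your argument is a clean, non-inductive strengthening of the paper's projection idea.
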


\begin{rmk}
Notice that a similarly `intuitive' inequality, that $W_f(\a,\b) \le W_f(\a,\c)$, is not true in general, even in two dimensions. To see this, consider for example the triple $\a = (1,1)$, $\b = (B,1)$ and $\c = (B,B)$, and let $B \gg 1$. Then $W_f(\a,\b) \to \infty$ as $B \to \infty$, but if $f$ is integrable then $W_f(\a,\c) = O(1)$. 
\end{rmk}

\begin{proof}
The proof will be by induction on $d$. When $d = 1$ it is trivial, since there is a unique path from $\a$ to $\c$, which passes through $\b$.

Let $d \ge 2$, and assume that the result holds for $d-1$. Let $\eps > 0$, and let $\gamma_\eps$ be the path from $\a$ to $\c$ given by Observation~\ref{apppath}. In other words, $\gamma_\eps$ is piecewise linear and increasing, with each linear piece parallel to one of the axes, and all of equal length, and $w_f(\gamma_\eps) \le W_f(\a,\c) + \eps$. 

Now consider the first point $\v$ on $\gamma_\eps$ such that $\v \ge \b$, and observe that $\v_j = \b_j$ for some $j \in [d]$. Assume that $j = 1$, let $\a' = (b_1,a_2,\ldots,a_d)$, and observe that $\a'$, $\b$ and $\v$ all live in the same $(d-1)$-dimensional hyperplane. Hence, by the induction hypothesis, it follows that $W_f(\a',\v) \ge W_f(\b,\v)$. 

Now, let $\gamma_1$ denote the section of $\gamma_\eps$ between $\a$ and $\v$, and let $\gamma_2$ denote the section from $\v$ to $\c$. Consider the path $\delta_1$ from $\a'$ to $\v$ obtained from $\gamma_1$ by projecting onto the hyperplane $x_1 = b_1$. Observe that each linear piece which is parallel to the $x_1$-axis disappears, and each other piece retains its length and direction, and has its $x_1$-coordinate increased. Since $f$ is decreasing, it follows that $w_f(\delta_1) \le w_f(\gamma_1)$. 

Now, since $W_f(\a',\v) \ge W_f(\b,\v)$, it follows that there exists a path $\delta'$ from $\b$ to $\v$ such that $w_f(\delta') \le w_f(\delta_1) + \eps$.

Finally, let $\delta_\eps$ denote the path from $\a'$ to $\c$ obtained by conjoining the paths $\delta'$ (from $\b$ to $\v$) and $\gamma_2$ (from $\v$ to $\c$). By the observations above, we have
$$w_f\big( \delta_\eps \big) \; \le \; w_f(\delta_1) + w_f(\gamma_2) + \eps \; \le \; w_f(\gamma_1) + w_f(\gamma_2) + \eps \; = \; w_f\big( \gamma_\eps \big) + \eps,$$
and hence
$$W_f(\b,\c) \; \le \; w_f\big( \delta_\eps \big) \; \le \; w_f\big( \gamma_\eps \big) + \eps \; \le \; W_f(\a,\c) + 2\eps,$$
by our choice of $\gamma_\eps$. Since $\eps > 0$ was arbitrary, the lemma follows. 
\end{proof}

We are now ready to prove Proposition~\ref{pod}. The proof is exactly as in~\cite{Hol}, except we need to replace Lemma~18 of~\cite{Hol} with Lemma~\ref{Wincr}, above. For completeness, we sketch the proof.

\begin{proof}[Proof of Proposition~\ref{pod}]
Let $f : \RR_+ \to \RR_+$ be continuous and decreasing; the lemma holds for any such function. The key step is a $d$-dimensional version of Proposition~15 of~\cite{Hol}, which states the following: for every $\a,\b,\c,\d \in \RR^d_+$ with $\a \le \b$ and $\c \le \d$, and every $x,Z \in \RR_+$ and $\r \in \RR^d_+$ with $\b,\d \le \r \le \b + \d + (x,\ldots,x)$, $x < Z$ and $\r \ge (2Z,\ldots,2Z)$, there exists $\s \in \RR^d_+$ with $\s \le \a + \c$ such that
$$W_f(\a,\b) + W_f(\c,\d) \; \le W_f(\s,\r) - (xd) f(Z).$$
This statement for $d = 2$ follows by Propositions~12 and~13 and Lemmas~17 and~18 of~\cite{Hol}. The first three generalize easily to the $d$-dimensional setting; in fact they are easy consequences of the fact that $f$ is decreasing. The last follows for general $d$ by Lemma~\ref{Wincr}. 

Proposition~\ref{pod} now follows by a straightforward induction argument, exactly as in the proof of Lemma~37 of~\cite{Hol}, noting that if $R$, $S$ and $T$ are rectangles with $R = \< S \cup T\>$ in $C([n]^d \times [k]^\ell,2)$, then $\dim(S) + \dim(T) \ge \dim(R) - (1,\ldots,1)$. 
\end{proof}

Finally, we prove Proposition~\ref{minW}. In this case the proof does not follow by the method of~\cite{Hol}, which was via an application of Green's Theorem in the plane. We shall discretize and apply Lemma~\ref{Wswitch}. Given two piecewise linear paths $\gamma$  and $\gamma'$ in $\RR^d_+$, we say that $\gamma'$ is a \emph{permutation} of $\gamma$ if it is obtained by permuting the linear pieces of $\gamma$. 

The following lemma allows us to permute adjacent linear pieces in order to move the path closer to the main diagonal.

\begin{lemma}\label{Wswitch}
Let $f : \RR_+ \to \RR_+$ be convex, let $ \a \in \RR^d_+$ and $b \in \RR_+$, and set $\b = \a + be_1$ and $\c = \a + be_2$. Suppose that $a_1 \le a_2$. Then
$$W_f(\a,\b) + W_f(\b,\b+\c) \; \le \; W_f(\a,\c) + W_f(\c,\b+\c).$$  
\end{lemma}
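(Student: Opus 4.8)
Here is how I would approach the lemma. The plan is to pass to discretized axis‑parallel paths and then transform them one elementary piece at a time, using only the convexity of $f$.

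First I would invoke Observation~\ref{apppath} to replace all four quantities by infima over piecewise‑linear increasing paths whose linear pieces are parallel to the axes and of a common small length $h$ (dividing $b$ and all the relevant coordinate differences). Since $\b-\a=be_1$, the segment $\a\to\b$ is forced, so $W_f(\a,\b)=b\,f\big(\prod_{i\neq1}a_i\big)$, and likewise $W_f(\a,\c)=b\,f\big(\prod_{i\neq2}a_i\big)$. Concatenating near‑optimal pieces then shows that $W_f(\a,\b)+W_f(\b,\b+\c)$ equals, up to an additive $\eps$, the least $w_f$‑cost of an $h$‑step increasing path from $\a$ to $\b+\c$ that passes through $\b$, and similarly for $\c$. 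A path passes through $\b=\a+be_1$ exactly when its first $b/h$ pieces point in direction $1$, and through $\c=\a+be_2$ exactly when its first $b/h$ pieces point in direction $2$; in either case the multiset of elementary pieces is the same — namely $(a_i+b)/h$ pieces in directions $i\in\{1,2\}$ and $a_i/h$ pieces in directions $i\geq3$ — because it is determined by the two endpoints $\a$ and $\b+\c$. Hence it suffices to show that any $h$‑step increasing path from $\a$ to $\b+\c$ beginning with $b/h$ direction‑$2$ pieces can be rearranged, by transposing adjacent pieces, into one beginning with $b/h$ direction‑$1$ pieces, without raising its $w_f$‑cost.

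The basic move is the transposition of two adjacent pieces in directions $j\neq k$. If $\x$ is the position just before this pair and $Q:=\prod_{i\neq j,k}x_i>0$, then ``$j$ then $k$'' costs $h\big(f(x_kQ)+f((x_j+h)Q)\big)$ while ``$k$ then $j$'' costs $h\big(f(x_jQ)+f((x_k+h)Q)\big)$, so the former minus the latter is $h\big(\psi_Q(x_jQ)-\psi_Q(x_kQ)\big)$, where $\psi_Q(y):=f(y+hQ)-f(y)$. Since $f$ is convex, $\psi_Q$ is non‑decreasing, so ``put the piece whose current coordinate is smaller first'' is cost‑non‑increasing: if $x_j\le x_k$, then ``$j$ then $k$'' is no more expensive than ``$k$ then $j$''. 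I would then carry out the rearrangement by a carefully ordered sequence of such moves, bringing $b/h$ direction‑$1$ pieces to the front; the point of the hypothesis $a_1\le a_2$ is precisely that, at each transposition of a direction‑$1$ piece past a direction‑$2$ piece performed in the process, the current coordinate $1$ does not exceed the current coordinate $2$ (because coordinates only increase along an increasing path and the path began with a block of direction‑$2$ pieces), so the move does not raise the cost.

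The step I expect to be the main obstacle is exactly this last one, in two respects. First, a travelling direction‑$1$ piece must also pass pieces in directions $3,\dots,d$, for which the analogous inequality $x_1\le x_i$ is not automatic when $a_i$ is small; I would handle the passive coordinates either by interleaving the reorganization cleverly or, as seems cleanest, by inducting on $d$ and projecting away one passive coordinate exactly as in the proof of Lemma~\ref{Wincr}, thereby reducing to the two‑dimensional case. Secondly, even in two dimensions, when $b$ is large compared with $a_2-a_1$, a crude ``move everything to the front'' raises the cost at some transpositions, and these must be balanced against gains elsewhere; I would address this by ordering the transpositions so that the monotonicity of $\psi_Q$ is always used in the favourable direction, or by a secondary induction splitting $b$ into small increments. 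Throughout it is worth stressing that only convexity of $f$ is used, never monotonicity, via the monotonicity of the increments $\psi_Q$, and that $\eps$ and $h$ are sent to $0$ at the end.
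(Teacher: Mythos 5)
Your ``basic move'' computation is, in fact, the entire content of the lemma, and it is exactly the paper's proof: take $j=1$, $k=2$, step length $b$ (not an infinitesimal $h$) and $Q=\prod_{i\ge 3}a_i$; convexity of $f$ gives $f(a_2Q)+f\big((a_1+b)Q\big)\le f(a_1Q)+f\big((a_2+b)Q\big)$, which is the stated inequality. The reason you did not stop there is that you read the endpoint ``$\b+\c$'' literally as $2\a+be_1+be_2$, which turns $W_f(\b,\b+\c)$ and $W_f(\c,\b+\c)$ into infima over genuinely multi-directional paths and forces you into a global rearrangement argument. As the paper's proof (and the way the lemma is used in the proof of Proposition~\ref{minW}, namely to transpose two \emph{adjacent linear pieces} of a discretized path) makes clear, the intended endpoint is the common corner $\a+be_1+be_2$. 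With that reading the displacement from $\b$ is $be_2$ and from $\c$ is $be_1$, so all four terms are costs of forced single segments — the same observation you already made for $W_f(\a,\b)$: one gets $W_f(\a,\b)=bf\big(a_2\prod_{i\ge3}a_i\big)$, $W_f(\b,\a+be_1+be_2)=bf\big((a_1+b)\prod_{i\ge3}a_i\big)$, $W_f(\a,\c)=bf\big(a_1\prod_{i\ge3}a_i\big)$ and $W_f(\c,\a+be_1+be_2)=bf\big((a_2+b)\prod_{i\ge3}a_i\big)$, and no discretization, rearrangement or limiting argument is needed.

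As written, your proposal has a genuine gap, and it sits exactly where you say you expect trouble: the rearrangement of a through-$\c$ path into a through-$\b$ path by adjacent transpositions. Your own criterion (swap is safe only when the travelling piece's current coordinate does not exceed that of the piece it passes) fails both for pieces in directions $3,\dots,d$ (where $x_1\le x_i$ is not available) and, even in two dimensions, for the naive ``move the first $b/h$ direction-$1$ pieces to the front'' strategy once $a_1+(j-1)h>a_2$, i.e.\ as soon as $b$ exceeds $a_2-a_1$. The remedies you sketch are not carried out, and the one concrete suggestion — projecting away a passive coordinate as in Lemma~\ref{Wincr} — cannot be imported as is, since that argument uses that $f$ is \emph{decreasing}, whereas here only convexity is assumed. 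So the proposal neither proves the lemma in the paper's (intended) sense, where it is a two-line convexity computation, nor completes a proof of the stronger literal statement you set out to prove.
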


\begin{proof}
This follows easily from the definition. Since $f$ convex, we have
$$f(x) - f(x + z) \; \le \; f(y) - f(y + z)$$
for any $x,y,z \in \RR$ with $x \ge y$. Thus
\begin{align*}
& W_f(\a,\b) + W_f(\b,\b+\c) \; = \; b f\Big( a_2 \prod_{i \ge 3} a_i \Big) \, + \, b f\Big( (a_1 + b) \prod_{i \ge 3} a_i \Big)\\
& \hspace{2cm} \le \; b f\Big( (a_2 + b) \prod_{i \ge 3} a_i \Big) \, + \, b f\Big( a_1 \prod_{i \ge 3} a_i \Big) \; = \; W_f(\a,\c) + W_f(\c,\b+\c),
\end{align*}
by the inequality above with $x = \prod_{i \neq 1} a_i$, $y = \prod_{i \neq 2} a_i$, and $z = b \prod_{i \ge 3} a_i$. 
\end{proof}

\begin{proof}[Proof of Proposition~\ref{minW}]
Let $f: \RR_+ \to \RR_+$ be continuous and convex; the result will hold for any such function. Recall that $ \a,\b \in \RR^d_+$ with $\a \le \b$, and assume without loss of generality that $b_1 \le \ldots \le b_d$. We require a lower bound on $W_f(\a,\b)$. Let $B = b_d = \Delta(\b)$ and let $\b' = (B,\ldots,B)$. Observe that $\b' \ge \b$, so
$$W_f(\a,\b') \; \le \; W_f(\a,\b) \,+\, W_f(\b,\b').$$
It is easy to see that $W_f(\b,\b') \le (Bd) f\big( \prod_{j=2}^d b_j \big) = d \Delta(\b) f\big( \prod_{j=2}^d b_j \big)$ (simply choose a path which grows in direction 1 first, then direction 2, and so on), and so the proposition will follow from the statement
$$W_f(\a,\b') \; \ge \; d \int_{\Delta(\a)}^B f(z^{d-1}) \,\dz.$$

Let $\eps > 0$, and let $\gamma$ be a path from $\a$ to $\b' = (B,\ldots,B)$ given by Observation~\ref{apppath}. Thus $\gamma$ is piecewise linear and increasing, with each linear piece parallel to one of the axes, and all of equal length, and $w_f(\gamma) \le W_f(\a,\b') + \eps$. Let $\delta > 0$ denote the length of each piece of $\gamma$, and note that we may choose $\delta$ as small as we like. 

We claim that there exists a permutation $\gamma'$ of $\gamma$ which passes within $\ell_\infty$-distance $\delta$ of every point of the straight line between $(A,\ldots,A)$ and $(B,\ldots,B)$, such that  
$$w_f(\gamma) \; \ge \; w_f(\gamma') \; \ge \; d \int_{\Delta(\a)}^B f(z^{d-1}) \,\dz \,-\, \eps.$$
This follows by Lemma~\ref{Wswitch}. Indeed, let $\gamma'$ be chosen to minimize $w_f(\gamma')$ over all permutations of $\gamma$. Assume, without loss of generality, that $a_1 \le a_2 \le \dots \le a_d$, and consider the piecewise linear path $\zeta$, given by 
$$(a_1,\ldots,a_d) \to \dots \to (a_j,\ldots,a_j,a_{j+1},\ldots,a_d) \to \dots \to (a_d,\ldots,a_d) \to (B,\ldots,B),$$
where $\x \to \y$ means that $\zeta$ follows a straight line between $\x$ and $\y$. By Lemma~\ref{Wswitch}, we can choose $\gamma'$ to be the permutation which follows $\zeta$ as closely as possible. The second inequality follows because $f$ is continuous, and we chose $\delta > 0$ sufficiently small.

Putting the pieces together, we have
$$W_f(\a,\b') \; \ge \; w_f(\gamma) - \eps \; \ge \; w_f(\gamma') - \eps \; \ge \; d \int_{\Delta(\a)}^B f(z^{d-1}) \,\dz \,-\, 2\eps.$$
Since $\eps > 0$ was arbitrary, the result follows.
\end{proof}

To finish the section, we prove the following simple property of $\lambda(d,r)$, which will be useful in Section~\ref{T2sec}.

\begin{prop}\label{<d/2}
Let $d,\ell \in \N$ with $d \ge 2$. Then $\lambda(d+\ell,\ell+2) < \ds\frac{d+1}{2}$.
\end{prop}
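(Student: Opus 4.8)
The plan is to bound $\lambda(d+\ell,\ell+2)$ by splitting the defining integral $\int_0^\infty g_{\ell+1}\big(z^{d-1}\big)\,\dz$ at a convenient point and estimating each part crudely. Recall that $\lambda(d+\ell,\ell+2)=\int_0^\infty g_{\ell+1}\big(z^{(d+\ell)-(\ell+2)+1}\big)\,\dz=\int_0^\infty g_{\ell+1}\big(z^{d-1}\big)\,\dz$, where $g_k(z)=-\log\beta_k(1-e^{-z})$. The key elementary facts about $\beta_k$ and $g_k$ I would use are those recorded in the Remark after Proposition~\ref{minW}: $g_k$ is positive, decreasing, convex and continuous, with $g_k(z)\le 2e^{-kz}$ for $z$ large; and at the other end, from the quadratic identity $\beta_k(u)^2=(1-(1-u)^k)\beta_k(u)+u(1-u)^k$ one reads off $\beta_k(0)=1$, hence $g_k(0)=0$, together with an explicit finite value of $g_k'(0)$ (equivalently $\beta_k'(0)$). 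So near $z=0$ we have $g_{\ell+1}(z)\le c_{\ell+1}\,z$ for an explicit constant $c_{\ell+1}$, namely $c_{\ell+1}=-g_{\ell+1}'(0)$.

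First I would treat the range $z\in[1,\infty)$: here $z^{d-1}\ge z$ (since $d\ge 2$), so $g_{\ell+1}(z^{d-1})\le g_{\ell+1}(z)$ by monotonicity, and using $g_{\ell+1}(z)\le 2e^{-(\ell+1)z}$ (valid for $z\ge 1$ after adjusting the constant; one checks $g_1(1)$ etc. is already small, or simply enlarges the threshold and absorbs the finite head into the $[0,1]$ estimate) gives $\int_1^\infty g_{\ell+1}(z^{d-1})\,\dz\le \tfrac{2}{\ell+1}e^{-(\ell+1)}<\tfrac12$ comfortably. Then for $z\in[0,1]$ I would use the linear bound at the origin combined with convexity: since $g_{\ell+1}$ is convex with $g_{\ell+1}(0)=0$, on $[0,1]$ it lies below the chord, so $g_{\ell+1}(z^{d-1})\le z^{d-1}g_{\ell+1}(1)$, whence $\int_0^1 g_{\ell+1}(z^{d-1})\,\dz\le \frac{g_{\ell+1}(1)}{d}$. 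The point is that $g_{\ell+1}(1)=-\log\beta_{\ell+1}(1-e^{-1})$ is a fixed number less than $1$ (indeed $\beta_k(u)\in(0,1)$ for $u\in(0,1)$, and one checks $\beta_{\ell+1}(1-e^{-1})$ is bounded below by an absolute constant, e.g. $\beta_k(u)\ge u$ always, so $g_k(1)\le 1$; a slightly sharper lower bound on $\beta$ gives $g_{\ell+1}(1)$ bounded away from $1$). Adding the two pieces yields $\lambda(d+\ell,\ell+2)\le \frac{g_{\ell+1}(1)}{d}+\text{(small)}$, which is easily $\le\frac{d+1}{2}$ for all $d\ge 2$ — in fact the bound one gets is far smaller than $\frac{d+1}{2}$, so there is a lot of room and the crude estimates above suffice.

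I expect the only mild obstacle to be getting clean, genuinely rigorous constants: the bound $g_k(z)\le 2e^{-kz}$ is only asserted "for $z$ sufficiently large," so I would either pin down the threshold explicitly from the identity for $\beta_k$ (using $1-e^{-z}\to 1$ and $\beta_k(u)\to 1$ as $u\to1$, with a controlled rate), or, to avoid this entirely, use the fully explicit crude bound $\beta_k(u)\ge\max(u,1-(1-u)^k)$ (immediate from the quadratic identity, since the right side of $\beta^2=(1-(1-u)^k)\beta+u(1-u)^k$ forces $\beta$ above each of its "fixed-point-like" lower bounds), giving $g_k(z)=-\log\beta_k(1-e^{-z})\le -\log(1-e^{-kz})\le 2e^{-kz}$ for $z\ge 1$ once $e^{-k}$ is small, i.e. for all $k\ge 1$. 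With that in hand every step is an elementary one-line estimate and the proposition follows. If one wants the sharper statement matching $\lambda(d,2)=\tfrac{d-1}{2}+o(1)$ quoted in the introduction, more care near the diagonal would be needed, but for the stated inequality $\lambda(d+\ell,\ell+2)<\frac{d+1}{2}$ the split-and-bound argument is more than enough.
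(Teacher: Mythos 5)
The key step of your proposal fails: it is not true that $\beta_k(0)=1$ and $g_k(0)=0$. Plugging $u=0$ into the quadratic identity $\beta_k(u)^2=(1-(1-u)^k)\beta_k(u)+u(1-u)^k$ gives $\beta_k(0)^2=0$, and the explicit formula confirms $\beta_k(0)=\tfrac12\big(1-1+\sqrt{1-2+1}\big)=0$. Hence $g_k(z)=-\log\beta_k(1-e^{-z})\to\infty$ as $z\to 0^+$; in fact $\beta_k(u)\sim\sqrt{u}$ near $0$, so $g_k(z)\approx\tfrac12\log(1/z)$ for small $z$. (Your own hypotheses are already inconsistent: a decreasing positive function with value $0$ at the origin would be identically zero.) Consequently the chord bound $g_{\ell+1}(z^{d-1})\le z^{d-1}g_{\ell+1}(1)$ on $[0,1]$, and the conclusion $\int_0^1 g_{\ell+1}(z^{d-1})\,\dz\le g_{\ell+1}(1)/d$, are false. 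The true contribution of $[0,1]$ is $\tfrac{d-1}{2}+O(1)$, because $g_1(z^{d-1})\approx\tfrac{d-1}{2}\log(1/z)$ there; this is exactly why $\lambda(d,2)=\tfrac{d-1}{2}+o(1)$, a fact you quote at the end but which flatly contradicts your claimed $O(1)$ bound and your assertion that "there is a lot of room". There is not: the statement $\lambda(d+\ell,\ell+2)<\tfrac{d+1}{2}$ leaves total slack less than $1$ beyond the unavoidable $\tfrac{d-1}{2}$.

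The repair is essentially the paper's argument. First reduce to $\ell=0$ using $g_{k+1}\le g_k$. On $[0,1]$, use an explicit lower bound on $\beta_1$ near $0$, namely $\beta_1(u)\ge\sqrt{u}$ together with $1-e^{-z}\ge(1-1/e)z$, to get $g_1(z^{d-1})\le\tfrac{d-1}{2}\log(1/z)+\tfrac{1-\log(e-1)}{2}$, whence $\int_0^1 g_1(z^{d-1})\,\dz\le\tfrac{d-1}{2}+\tfrac25$. On $[1,\infty)$ your monotonicity step $g_1(z^{d-1})\le g_1(z)$ is fine, but because the remaining slack is only $1-\tfrac25$, the tail constant must be genuinely small; the paper gets $\int_1^\infty g_1(z)\,\dz\le\int_{\log(3/2)}^\infty g_1(z)\,\dz<\tfrac{\pi^2}{36}<\tfrac13$ from the exact integral of~\cite{HLR}, and a cruder bound such as $2e^{-1}$ (the $\ell=0$ case of your $\tfrac{2}{\ell+1}e^{-(\ell+1)}$) does not obviously fit under the budget. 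Your observation that $\beta_k(u)\ge 1-(1-u)^k$, hence $g_k(z)\le-\log(1-e^{-kz})$, is correct and a clean way to make the decay at infinity explicit, but it addresses only the easy half; without a correct treatment of the singularity at $0$ the proof does not go through.
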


\begin{proof}
Recall from~\eqref{lamdef} that 
$$\lambda(d+\ell,\ell+2) \; = \; \int_0^\infty g_{\ell+1}(z^{d-1}) \, \dz.$$
By Proposition~3 of~\cite{d=r=3}, we have $g_{k+1}(z) \le g_k(z)$ for every $k \in \N$ and $z \in (0,\infty)$, so it suffices to prove the result for $\ell = 0$. By Theorem~5 of~\cite{HLR}, we have
$$\int_{\log(3/2)}^\infty g_1(z) \, \dz \; = \; \int_0^{2/3} \frac{-\log\big( \beta_1(1 - x) \big)}{x} \, \textup{d} x \; < \; \frac{\pi^2}{36} \; < \; \frac{1}{3}.$$
Moreover, since $g_1$ is decreasing, we have $g_1(z^{d-1}) \le g_1(z)$ whenever $z \ge 1$. Hence
$$\int_1^\infty g_1(z^{d-1}) \, \dz \; \le \; \int_1^\infty g_1(z) \, \dz \; \le \; \int_{\log(3/2)}^\infty g_1(z) \, \dz \; < \; \frac{1}{3}.$$

To bound the integral when $z < 1$, observe that $\beta_1(u) \ge \sqrt{u}$ for $0 \le u \le 1$, and that $1 - e^{-z} \ge (1 - 1/e)z$ for $0 \le z \le 1$, so
$$\beta_1(1 - e^{-z}) \; \ge \; \sqrt{1 - e^{-z}} \; \ge \; \left( 1 - \frac{1}{e} \right)^{1/2} \sqrt{z}$$
for every $0 \le z \le 1$. Hence $g_1(z^{d-1}) \le -\log\left( z^{(d-1)/2} \right) + \ds\frac{1 - \log(e-1)}{2}$, and so
$$\int_0^1 g_1(z^{d-1}) \, \dz \; \le \; \frac{d-1}{2} \int_0^1 -\log z  \, \dz \,+\, \frac{2}{5}  \; = \; \frac{d-1}{2}\,+\, \frac{2}{5}.$$
Thus $\lambda(d+\ell,\ell+2) \le \ds\frac{d-1}{2} + \frac{2}{5} + \frac{1}{3} < \frac{d+1}{2}$, as claimed.
\end{proof}

\section{Proof of Theorem~\ref{r=2}}\label{T2sec}

In this section we complete the proof of Theorem~\ref{r=2}. We shall follow the basic method of Holroyd~\cite{Hol} (see also Sections~4.3 and~4.4 of~\cite{d=r=3}), but we shall need some new ideas here also. Theorem~\ref{r=2} will follow easily from the following theorem (see Corollary~\ref{2r2cor}).

\begin{thm}\label{2r2tech}
For every $d,\ell \in \N$ with $d \ge 2$, and every $\eps > 0$, there exists $B_0 > 0$ and $k_0: \N \to \N$ such that the following holds for every $B \ge B_0$ and every $k \ge k_0(B)$. 

Let $G = C([n]^d \times [k]^\ell,2)$, and let $p > 0$ be sufficiently small. Let $R \subset V(G)$ be a rectangle with $\lg(R) = B/p^{1/(d-1)}$. Then
$$\Pr_p\big( R \in \< A \cap R \> \big) \; \le \; \exp\left( - \frac{d \lambda(d+\ell,\ell+2) - \eps}{p^{1/(d-1)}} \right).$$
\end{thm}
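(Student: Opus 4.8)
The plan is to combine the hierarchy bound of Lemma~\ref{basic} with the crossing estimate of Lemma~\ref{crossR} and the analytic estimates of Section~\ref{analsec}, following the strategy of Holroyd~\cite{Hol}. Fix $d,\ell,\eps$ as in the statement, and recall the hierarchy of constants $T \ll Z \ll \delta \ll 1 \ll B \ll k$. Set $\hat T = T/p^{1/(d-1)}$ and $\hat Z = Z/p^{1/(d-1)}$. By Lemma~\ref{basic}, $\Pr_p(R \in \<A \cap R\>)$ is at most a sum over all good hierarchies $\HH \in \HH(R,\hat T,\hat Z)$ of the product of the `long edge' probabilities $P_p(R_v,R_u)$ and the seed probabilities $P_p(R_u)$. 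By Lemma~\ref{fewhiers} there are only $M(1/p)^M$ such hierarchies, a factor which is $\exp(o(1/p^{1/(d-1)}))$ and hence negligible. So it suffices to show that each single good and satisfied hierarchy contributes at most $\exp\big( -(d\lambda(d+\ell,\ell+2) - \eps/2)/p^{1/(d-1)} \big)$.

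The first main step is to bound each factor $P_p(R_v,R_u)$, the probability of the event $D(R_v,R_u)$ that $R_u$ is spanned from the `seed' rectangle $R_v$. By the standard argument (as in~\cite{Hol,d=r=3}), spanning $R_u$ from $R_v$ forces a sequence of crossings of thin rectangles in the two coordinate directions as one grows from $\dim(R_v)$ to $\dim(R_u)$, and using the van den Berg--Kesten Lemma together with Lemma~\ref{crossR} one gets
$$P_p(R_v,R_u) \; \le \; \exp\Big( -(1-\delta) U_{g_{\ell+1}}(R_v,R_u) + O\big(k\,\phi(R_u)\big)\,q \Big),$$
where $U_{g_{\ell+1}}$ is the discretized line integral from~\eqref{Udef}; here the key input is that $-\log \beta_{\ell+1}(u_1(R')) = g_{\ell+1}(\cdot)$ via~\eqref{qbeta}, and the condition $a_i \le B/p^{1/(d-1)}$ on the short sides (guaranteed since $\lg(R) = B/p^{1/(d-1)}$ and, for good hierarchies, the rectangles stay within a bounded aspect ratio) makes Lemma~\ref{crossR} applicable. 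The seed probabilities $P_p(R_u)$ are bounded crudely using $\sht(R_u) \le \hat Z$: a spanned rectangle of short side at most $\hat Z$ has semi-perimeter $O(\hat Z)$ with high probability, contributing at most $\exp(O(Z/p^{1/(d-1)}))$ over all seeds, which is small once $Z$ is small — this uses the Aizenman--Lebowitz-type control on $\phi$ of the pod together with Proposition~\ref{pod}, namely $\phi(S(\HH)) \le \sum_{\text{seeds }u}\phi(R_u)$.

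The second main step is to assemble these bounds. Summing the edge estimates and applying Proposition~\ref{pod} gives
$$\sum_{N^\->_{G_\HH}(u)=\{v\}} U_{g_{\ell+1}}(R_v,R_u) \; \ge \; U_{g_{\ell+1}}(S(\HH),R) \; - \; O\big(p^{1/(d-1)} g_{\ell+1}(Z)\big)\cdot |\{\text{split vertices}\}|,$$
and since a good hierarchy has only $O_B(1)$ split vertices, and since $p^{1/(d-1)} g_{\ell+1}(Z)/p^{1/(d-1)} = g_{\ell+1}(Z)$ can be absorbed when $Z$ is large enough (or, more carefully, the product $Z \cdot$ (number of splits) is controlled), the error is $o(1/p^{1/(d-1)})$. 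Now $U_{g_{\ell+1}}(S(\HH),R) = W_{g_{\ell+1}}\big(p^{1/(d-1)}\dim(S(\HH)), p^{1/(d-1)}\dim(R)\big)$; since $\dim(S(\HH))$ is small (its semi-perimeter is $O(\hat Z)$, so $p^{1/(d-1)}\dim(S(\HH)) = O(Z)$) and $\Delta(p^{1/(d-1)}\dim(R)) \ge B$ while $\min_j$ of its coordinates is at most $B$ as well, Proposition~\ref{minW} gives
$$W_{g_{\ell+1}}\big(p^{1/(d-1)}\dim(S(\HH)), p^{1/(d-1)}\dim(R)\big) \; \ge \; d\int_{O(Z)}^{B} g_{\ell+1}(z^{d-1})\,\dz \; - \; o(1),$$
and letting $Z \to 0$ and $B \to \infty$ (in that order) the right side tends to $d\int_0^\infty g_{\ell+1}(z^{d-1})\,\dz = d\lambda(d+\ell,\ell+2)$ by~\eqref{lamdef}; note this integral is finite, which is why the truncation errors are controllable (and Proposition~\ref{<d/2} guarantees $\lambda(d+\ell,\ell+2) < (d+1)/2$, which is what one needs for the resulting bound $n^{-d-\eps}$ in the eventual application). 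Collecting everything: each good satisfied hierarchy has probability at most $\exp\big(-(d\lambda(d+\ell,\ell+2)-\eps)/p^{1/(d-1)}\big)$, and multiplying by the number of hierarchies (which is sub-exponential in $1/p^{1/(d-1)}$) completes the proof.

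The main obstacle I expect is the bookkeeping of error terms — precisely tracking that the $O(k\,\phi(R_u))q$ losses in the edge estimates, the $\exp(O(Z/p^{1/(d-1)}))$ from the seeds, and the $O(p^{1/(d-1)} g_{\ell+1}(Z))\cdot|\text{splits}|$ term from Proposition~\ref{pod} all add up to at most $\eps/p^{1/(d-1)}$ after the constants are chosen in the order $T \ll Z \ll \delta \ll 1 \ll B \ll k$. In particular one must be careful that the crossing bound of Lemma~\ref{crossR} is only good when the short sides are at most $B/p^{1/(d-1)}$, so one needs the standard fact that in a good hierarchy the rectangles never become too eccentric relative to $R$ (this is where the hypothesis $\lg(R) = B/p^{1/(d-1)}$, rather than something larger, is used); and the interchange of limits $p \to 0$, then $Z \to 0$, then $B \to \infty$ must be done in the right order so that the $o(1)$'s in Proposition~\ref{minW} and the truncation of the integral defining $\lambda$ genuinely vanish.
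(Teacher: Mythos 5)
Your overall frame (Lemma~\ref{basic} plus Lemma~\ref{fewhiers}, the growth estimate via Lemma~\ref{crossR} and the van den Berg--Kesten Lemma, then Propositions~\ref{pod} and~\ref{minW}) is the same as the paper's, but two steps at the heart of your argument fail as stated, and fixing them requires exactly the case analysis you have skipped. First, you claim the pod satisfies $\phi(S(\HH)) = O(\hat Z)$, so that $\Delta\big(p^{1/(d-1)}\dim S(\HH)\big) = O(Z)$ and the integral in Proposition~\ref{minW} runs from $O(Z)$ to $B$. This is false: condition $(j)$ only forces a seed to be \emph{thin} ($\sht(R_u)\le \hat Z$), not small --- a seed may have semi-perimeter comparable to $\phi(R) = \Theta(B/p^{1/(d-1)})$, and there can be several seeds, so $\phi(S(\HH))$, which Proposition~\ref{pod} bounds only by $\sum_{\textup{seeds}}\phi(R_u)$, can itself be of order $1/p^{1/(d-1)}$; in that regime $U_{g_{\ell+1}}(S(\HH),R)$ gives essentially nothing. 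Relatedly, your accounting of the seeds as a loss of $\exp(O(Z/p^{1/(d-1)}))$ has the wrong sign and the wrong role: the seed estimate must be a strong \emph{gain}. The paper proves (Lemma~\ref{seeds}) that a spanned rectangle with $\sht(R_u)\le\hat Z$ has probability at most $e^{-\alpha\phi(R_u)}$ with $\alpha = d\lambda(d+\ell,\ell+2)B$ (achievable because $Z$ is chosen small after $B$), and then splits into the case $\sum_{\textup{seeds}}\phi(R_u)\ge 1/(Bp^{1/(d-1)})$, where this seed term alone already yields $\exp(-d\lambda(d+\ell,\ell+2)/p^{1/(d-1)})$, and the complementary case, where the pod really is small, $\Delta\big(p^{1/(d-1)}\dim S(\HH)\big)\le 1/B$, and the variational bound takes over. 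Without this dichotomy your bound on the pod term simply does not hold.

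Second, your assertion that the error term in Proposition~\ref{minW} is $o(1)$ fails when $R$ is long and thin. The hypothesis fixes only $\lg(R)=B/p^{1/(d-1)}$; the other sides may be small, in which case $q\prod_{j\ge 2}b_j$ is small and the error $d\,\Delta(\b)\,g_{\ell+1}\big(\prod_{j\neq i}b_j\big) = Bd\,g_{\ell+1}(\textup{small})$ is enormous (recall $g_{\ell+1}(z)\to\infty$ as $z\to 0$), swamping the main term. The paper isolates this as a separate case, $\grr(R):=p\prod_{j\ge2}b_j\le 2\log(Bd)$, and there avoids Proposition~\ref{minW} altogether, obtaining $U_{g_{\ell+1}}(S,R)\ge (B-1)\,g_{\ell+1}\big(4\log(Bd)/\sqrt{B}\big)\ge 2B$ by considering growth in the long direction only, which already beats the target exponent. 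Your remaining ingredients --- the bound on $P_p(R_v,R_u)$ via Lemma~\ref{crossR} with the corner correction, the boundedly many split vertices in Proposition~\ref{pod}, and the polynomially many hierarchies --- correspond to the paper's Lemma~\ref{DRR'}, Corollary~\ref{DRRcor} and Lemma~\ref{fewhiers} and are fine in outline, but as written the proposal has a genuine gap at precisely the two places where the paper's three-case analysis (many seeds; thin rectangle; main case) is indispensable.
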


We begin by bounding the probability that a rectangle grows sideways by $T/p^{1/(d-1)}$. Let $R \subset R'$ be rectangles in $C([n]^d \times [k]^\ell,2)$, and recall from Section~\ref{hiersec} that $P_p(R,R') = \Pr_p\big( D(R,R') \big)$, where $D(R,R')$ denotes the event that $R'$ is internally spanned by $(A \cup R) \cap R'$. 

We shall deduce the following lemma from Lemma~\ref{crossR}. We refer the reader to~\cite{GH2} (see Lemma~5) where a similar trick is used.

\begin{lemma}\label{DRR'}
For each $d,\ell \in \N$ and $B,\delta > 0$, there exist constants $k \in \N$, $Z = Z(k) > 0$ and $T  = T(k,Z) > 0$ such that the following holds.

Let $p > 0$ be sufficiently small, and let $R \subset R'$ be rectangles in $C([n]^d \times [k]^\ell,2)$ with $\dim(R) = (m_1,\ldots,m_d)$ and $\dim(R') = (m_1+s_1,\ldots,m_d+s_d)$. Suppose that $Z/p^{1/(d-1)} \le m_j \le B/p^{1/(d-1)}$ and $s_j \le T/p^{1/(d-1)}$ for each $j \in [d]$. Then
\begin{eqnarray}
P_p(R,R') & \le & \exp\bigg( - \big(1 - 2\delta \big) \sum_{j=1}^d g_{\ell+1}\Big( q \prod_{i \neq j} m_i \Big) s_j \bigg).\label{DRR}
\end{eqnarray}
\end{lemma}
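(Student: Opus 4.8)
\textbf{Proof proposal for Lemma~\ref{DRR'}.}
The plan is to build up the growth from $R$ to $R'$ one coordinate direction at a time, using Lemma~\ref{crossR} as the basic brick. First I would reduce the event $D(R,R')$ to a bounded collection of crossing events. If $R' \in \< (A\cup R)\cap R'\>$, then $R'$ is filled by the process started from $A$ together with all of $R$. Since $R'\setminus R$ decomposes into $O(1)$ `slabs' (the regions obtained by extending $R$ in each of the $2d$ axial directions, carved up in the obvious way so that each slab abuts the already-infected core along one of its faces), and each such slab has one short side-length at most $T/p^{1/(d-1)}$ and the others at most $(B+T)/p^{1/(d-1)}$, the filling of $R'$ forces each slab to be internally filled given its abutting face is infected. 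A standard argument (as in Holroyd, or Section~4 of~\cite{d=r=3}) shows that internally filling a slab of short side-length $s$ given an infected face is implied by $s$ disjoint crossing events, one across each `layer' of the slab in the long directions; so $D(R,R')$ implies a disjoint occurrence of at most $\sum_j s_j$ crossing events $H^{\rightarrow(i)}$ of rectangles contained in $R'$, each having its $i$-th dimension equal to (roughly) the corresponding dimension of $R$.

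Next I would apply the van den Berg--Kesten Lemma to bound $P_p(R,R')$ by the product of the probabilities of these crossing events, and then apply the upper bound of Lemma~\ref{crossR} to each factor. Each factor contributes $\big(\beta_{\ell+1}(u_i(\cdot))\big)^{(1-\delta')(\text{length})}$ for a rectangle whose $i$-th side has length close to $m_i$ (the layers being thin in the growth directions, of total width $s_j$); collecting the $s_j$ layers in direction $j$ and using $\big(\beta_{\ell+1}(u_j)\big)^{m_j} = e^{-g_{\ell+1}(q\prod_{i\neq j}m_i)}$ from~\eqref{qbeta}, the product becomes $\exp\big(-(1-\delta')\sum_j g_{\ell+1}(q\prod_{i\neq j}m_i)\,s_j\big)$ up to lower-order corrections. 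The role of the constants is: $k$ large feeds Lemma~\ref{crossR} (so that the $\delta'$ there is as small as we like and the $kt$-type losses are absorbable); $Z$ large (i.e. $m_j \ge Z/p^{1/(d-1)}$) ensures the rectangles being crossed are long enough that the `$-kt$' and `$+1$' additive terms in the exponents of Lemma~\ref{crossR}, together with the number $O(1)$ of slabs and the union over $O(1)$ many decompositions, cost at most a $(1+\delta)$-factor; and $T$ small keeps $s_j/p^{1/(d-1)} \ll m_j$, so that $u_i$ of the crossed rectangles is within $(1\pm\delta)$ of $u_i(R)$ and, by continuity/monotonicity of $\beta_{\ell+1}$ and $g_{\ell+1}$, replacing it by $g_{\ell+1}(q\prod_{i\neq j}m_i)$ costs only another $(1+\delta)$-factor. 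Choosing the constants in the order $k$, then $Z$, then $T$ makes all these error factors combine into the single $(1-2\delta)$ in~\eqref{DRR}.

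The main obstacle, and the step deserving the most care, is the deterministic/combinatorial reduction in the first paragraph: showing cleanly that internally filling the annular region $R'\setminus R$ (given $R$ infected) decomposes into a \emph{disjoint} union of the right number of crossing events in the right directions, with the correct dimensions, so that both vBK applies and the dimensions plugged into Lemma~\ref{crossR} really are (up to the harmless $\pm 1$ and the thin $s_j$) the $m_i$. This is exactly the point where one must be careful about the order in which the directions are grown and about the corners of the annulus; the trick referenced from~\cite{GH2} is precisely a way to organize this so that one only ever needs to cross rectangles whose long dimensions are bounded by $B/p^{1/(d-1)}$ (which is what Lemma~\ref{crossR} requires), rather than something growing with $n$. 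Once that bookkeeping is set up, the probabilistic estimate is a routine application of vBK, Lemma~\ref{crossR}, \eqref{qbeta}, and the continuity of $\beta_{\ell+1}$.
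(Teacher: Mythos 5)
There is a genuine gap, and it sits exactly where you say the care is needed: the reduction of $D(R,R')$ to crossing events is in the wrong direction. The correct statement (and the one the paper uses) is that if $R'$ is spanned by $A\cup R$ then, for each $j$, the single event $H^{\rightarrow(j)}(R^{>}_j)$ (and $H^{\leftarrow(j)}(R^{<}_j)$) occurs: the slab is crossed in its \emph{thin} direction $j$, over length $s_j$, with cross-section roughly $\prod_{i\neq j}m_i$. Lemma~\ref{crossR} applied to this event (crossing length $s_j$ in the exponent, $u_j(R)=u\big(\prod_{i\neq j}m_i\big)$ inside $\beta_{\ell+1}$) gives the factor $\big(\beta_{\ell+1}(u_j(R))\big)^{(1-\delta)s_j-kt}$, and since $\beta_{\ell+1}(u_j(R))=e^{-g_{\ell+1}(q\prod_{i\neq j}m_i)}$ by \eqref{qbeta}, multiplying over $j$ yields \eqref{DRR}. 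Your decomposition instead asks each unit-thickness layer of the slab to be crossed in a \emph{long} direction $i$, with $m_i$ in the exponent and a thin cross-section inside $\beta_{\ell+1}$. That implication is false (when $R$ grows into a new layer, the layer is infected with help from the already-infected face, not by a lengthwise crossing generated within the layer), and even if it held, the bookkeeping would not produce \eqref{DRR}: a lengthwise crossing of a layer costs about $\beta_{\ell+1}\big(u(\prod_{i'\neq i,j}m_{i'})\big)^{m_i}$, which is not $e^{-g_{\ell+1}(q\prod_{i'\neq j}m_{i'})}$; your identity ``$(\beta_{\ell+1}(u_j))^{m_j}=e^{-g_{\ell+1}(q\prod_{i\neq j}m_i)}$'' misreads \eqref{qbeta}, which has no exponent $m_j$.

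A second, smaller discrepancy concerns the corners and disjointness. The $2d$ slabs $R^{<}_j,R^{>}_j$ overlap in the corner region $C=\bigcup_{i<j}R_i\cap R_j$, so one cannot simply invoke van den Berg--Kesten across them; a single infected corner site may serve two slab crossings. The trick borrowed from~\cite{GH2} is not about keeping side-lengths below $B/p^{1/(d-1)}$ (that is automatic from the hypotheses); it is to condition on $W=A\cap C$ with $|W|=t$, note that the slabs minus corners are disjoint so the conditional crossing events are independent, and use the $t$-robust bound $h^{(j)}(\cdot,t)\le\big(\beta_{\ell+1}(u_j)\big)^{(1-\delta)s_j-kt}$ from Lemma~\ref{crossR}; the resulting binomial sum over $t$ is then absorbed using $m_j\ge Z/p^{1/(d-1)}$, $|C|\le\binom{d}{2}T^2B^{d-2}p^{-d/(d-1)}$ and $T$ small. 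Your choreography of the constants ($k$, then $Z$, then $T$) and the continuity argument replacing $u_j$ of the slightly enlarged cross-sections by $u_j(R)$ are fine, but the proof needs the slab-crossing reduction and the corner-conditioning device above rather than the layerwise lengthwise crossings and plain vBK.
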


\begin{proof}
For each direction $j \in [d]$, let $R^{<}_j$ denote the rectangle $\{\x \in R' : x_j < y_j \textup{ for all } \y \in R\}$, and similarly let $R^{>}_j$ denote the rectangle $\{\x \in R' : x_j > y_j \textup{ for all } \y \in R\}$. Write $R_j = R^{<}_j \cup R^{>}_j$, and let $C = \bigcup_{i < j} R_i \cap R_j$ denote the corner areas of $R' \setminus R$. Finally, let $W = A \cap C$, and let $t = |W|$. 

If the event $D(R,R')$ occurs, then clearly the events $H^{\-> (j)} (R^{>}_j)$ and $H^{\leftarrow (j)} (R^{<}_j)$ must also occur for each $j \in [d]$. Hence, 
\begin{eqnarray*}
P_p(R,R') & \le & \Pr_p\left( \bigwedge_j H^{\-> (j)} (R^{>}_j) \wedge H^{\leftarrow (j)} (R^{<}_j) \right).
\end{eqnarray*}
Note that $|C| \le {d \choose 2} T^2 B^{d-2} p^{-d/(d-1)}$. The idea is that, since $T$ may be chosen small compared with $Z$ (and also $B$, $d$, $k$, $\ell$), it is likely that $|A \cap C|$ will be small compared with $s_j$, and so the events $H^{\-> (i)}(R^{>}_i)$ and $H^{\leftarrow (j)}(R^{<}_j)$ are `almost independent'.

To be precise, by Lemma~\ref{crossR}, and the binomial theorem, we have 
\begin{eqnarray*}
P_p(R,R') & \le & \sum_{t=0}^{|C|} {{|C|} \choose t} p^t (1-p)^{|C|-t} \prod_j h^{(j)}(R^{>}_j,t) \cdot h^{(j)}(R^{<}_j,t)\\
& \le & \sum_{t=0}^{|C|} {{|C|} \choose t} p^t \prod_j \big( \beta_{\ell+1}\big( u_j(R) \big) \big)^{(1 - \delta) s_j - kt}\\
& = & \left( \prod_j \Big( \beta_{\ell+1}\big( u_j(R) \big)  \Big)^{(1-\delta)s_j} \right)  \left( 1 + p \prod_j \Big( \beta_{\ell+1}\big( u_j(R) \big) \Big)^{-k} \right)^{|C|}.
\end{eqnarray*}

To estimate the error term, we use our bounds on $m_j$ and $s_j$. Indeed, since $m_j \ge Z/p^{1/(d-1)}$ for each $j \in [d]$, we have $u_j(R) \ge 1 - \exp( - Z^{d-1} ) \ge Z^{d-1}/2$. Since $\beta_{\ell+1}(u)$ is increasing on $[0,1]$ (see Proposition~3 of~\cite{d=r=3}), and $\beta_{\ell+1}(u) \ge u$ when $u$ is small, it follows that
$$\prod_j \Big( \beta_{\ell+1}\big( u_j(R) \big) \Big)^{-k} \; \le \; \prod_j \big( 2 / Z^{d-1} \big)^k \; \le \; \left( \frac{1}{Z} \right)^{kd^2}.$$
Let $T_1 = p^{1/(d-1)}  \max_j \{ s_j \} \le T$, and recall that $|C| \le {d \choose 2} T_1^2 B^{d-2} p^{-d/(d-1)}$, since $m_j \le B / p^{1/(d-1)}$ for each $j \in [d]$. Hence, since $T_1 \le T$,
\begin{align*}
& \left( 1 + p \prod_j \Big( \beta_{\ell+1}\big( u_j(R) \big) \Big)^{-k} \right)^{|C|} \; \le \; \exp\Big( |C| Z^{-kd^2} p \Big) \; \le \; \exp\left( \frac{T_1^{3/2}}{p^{1/(d-1)}} \right)\\
& \hspace{3cm} \le \; \exp\bigg( \delta g_{\ell+1}\big( 2B^{d-1} \big) \max_j \{s_j\}  \bigg) \; \le \; \exp\bigg( \delta \sum_{j=1}^d g_{\ell+1}\Big( q \prod_{i \neq j} m_i  \Big) s_j  \bigg),
\end{align*} 
if $T > 0$ is chosen to be sufficiently small (with respect to $d$, $\ell$, $B$, $\delta$, $k$ and $Z$). The penultimate inequality follows since we may we choose $\sqrt{T} \le \delta g_{\ell+1}\big( 2B^{d-1} \big)$. In the final inequality, we used the facts that $g_{\ell+1}$ is decreasing, and that $q \prod_{i \neq j} m_i  \le 2B^{d-1}$. 

Finally, recall that $e^{-g_{\ell+1}(qx)} = \beta_{\ell+1}\big( u(x) \big)$, so 
$$\prod_j \Big( \beta_{\ell+1}\big( u_j(R) \big)  \Big)^{(1-\delta)s_j}  \; = \; \exp\bigg( - \big(1 - \delta \big) \sum_{j=1}^d g_{\ell+1}\Big( q \prod_{i \neq j} m_i  \Big) s_j \bigg).$$
Combining these bounds, the lemma follows.
\end{proof}

We now rewrite the right-hand side of \eqref{DRR} in a more useful form. We shall use the following easy observation from~\cite{Hol}.

\begin{obs}[Proposition 12 of \cite{Hol}]\label{Uanaltriv}
If $f$ is decreasing, then
$$W_f(\mathbf{a},\mathbf{b}) \; \le \; \sum_{j=1}^d \big( b_j - a_j \big) f\Big( \prod_{i \neq j} a_i \Big).$$
\end{obs}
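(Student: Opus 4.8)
The plan is to bound $W_f(\a,\b)$ from above by evaluating $w_f$ on a single explicit path. Since the infimum defining $W_f(\a,\b)$ is only meaningful when $\a \le \b$ (otherwise there is no increasing path), I would take $\gamma$ to be the ``coordinate staircase'' that first increases the first coordinate from $a_1$ to $b_1$, then the second from $a_2$ to $b_2$, and so on up to the $d$-th. This $\gamma$ is piecewise linear and increasing, hence admissible, so $W_f(\a,\b) \le w_f(\gamma)$.

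First I would compute $w_f(\gamma)$ segment by segment. On the $j$-th segment only the coordinate $x_j$ varies, running from $a_j$ to $b_j$, while $x_i = b_i$ for $i < j$ and $x_i = a_i$ for $i > j$; thus only the term $f\big(\prod_{i\neq j} x_i\big)\,\textup{d}x_j$ of the integrand contributes, and along this segment $\prod_{i\neq j} x_i = \big(\prod_{i<j} b_i\big)\big(\prod_{i>j} a_i\big)$ is constant. Hence the $j$-th segment contributes exactly $(b_j - a_j)\, f\Big( \big(\prod_{i<j} b_i\big)\big(\prod_{i>j} a_i\big) \Big)$ to $w_f(\gamma)$.

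Then I would invoke that $f$ is decreasing together with $b_i \ge a_i$ for all $i$: since $\big(\prod_{i<j} b_i\big)\big(\prod_{i>j} a_i\big) \ge \prod_{i\neq j} a_i$, monotonicity gives $f\Big( \big(\prod_{i<j} b_i\big)\big(\prod_{i>j} a_i\big) \Big) \le f\big( \prod_{i\neq j} a_i \big)$. Summing over $j \in [d]$ yields $w_f(\gamma) \le \sum_{j=1}^d (b_j - a_j)\, f\big( \prod_{i\neq j} a_i \big)$, which combined with $W_f(\a,\b) \le w_f(\gamma)$ gives the claim. There is no genuine obstacle here; the only point requiring a little care is the bookkeeping of which coordinates have already been increased on each segment, and the observation that replacing some $a_i$'s by the larger $b_i$'s only enlarges the product (and hence, since $f$ is decreasing, only decreases the value of $f$), which is immediate.
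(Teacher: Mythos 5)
Your proof is correct and is essentially the intended argument: the paper states this observation without proof (citing Holroyd's Proposition 12), and the standard proof there is exactly your ``increase one coordinate at a time'' staircase path combined with monotonicity of $f$. Indeed the same device appears verbatim elsewhere in the paper, e.g.\ in the proof of Proposition~\ref{minW}, where $W_f(\b,\b') \le (Bd) f\big( \prod_{j=2}^d b_j \big)$ is justified by choosing ``a path which grows in direction 1 first, then direction 2, and so on,'' so your route matches the paper's.
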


By Observation~\ref{Uanaltriv} and the definition of $U_{g_{\ell+1}}(R,R')$, we have
$$\frac{1}{p^{1/(d-1)}}  U_{g_{\ell+1}}(R,R') \; \le \; \sum_{j=1}^d g_{\ell+1} \Big( q \prod_{i \neq j} m_i  \Big) s_j.$$
The following corollary of Lemma~\ref{DRR'} is now immediate.

\begin{cor}\label{DRRcor}
Under the conditions of Lemma~\ref{DRR'},
$$P_p(R,R') \; \le \; \exp \left( - \big( 1 - 2\delta \big) \frac{U_{g_{\ell+1}}(R,R')}{p^{1/(d-1)}}  \right).$$
\end{cor}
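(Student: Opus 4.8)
The plan is simply to rewrite the bound \eqref{DRR} of Lemma~\ref{DRR'} in terms of the line integral $U_{g_{\ell+1}}$, using the elementary upper bound of Observation~\ref{Uanaltriv}; the corollary is then immediate from the monotonicity of $x \mapsto e^{-x}$, so there is no genuine obstacle here, only a little bookkeeping.

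Concretely, I would first apply Observation~\ref{Uanaltriv} with the decreasing function $f = g_{\ell+1}$ (decreasing by Proposition~3 of~\cite{d=r=3}) to the points $\a = p^{1/(d-1)}\dim(R)$ and $\b = p^{1/(d-1)}\dim(R')$. Since $\dim(R') = \dim(R) + (s_1,\dots,s_d)$ we have $b_j - a_j = p^{1/(d-1)} s_j$ and $\prod_{i\neq j}a_i = p\prod_{i\neq j}m_i$, so the definition~\eqref{Udef} of $U_{g_{\ell+1}}$ yields
$$\frac{U_{g_{\ell+1}}(R,R')}{p^{1/(d-1)}} \; \le \; \sum_{j=1}^d g_{\ell+1}\Big( p\prod_{i\neq j}m_i \Big)\, s_j .$$
Next, I would pass from $p$ to $q$ inside the argument of $g_{\ell+1}$: since $p \le q = -\log(1-p)$ and $q/p \to 1$ as $p \to 0$, and since the hypotheses $Z/p^{1/(d-1)} \le m_j \le B/p^{1/(d-1)}$ force $p\prod_{i\neq j}m_i$ and $q\prod_{i\neq j}m_i$ to lie in the fixed compact interval $[Z^{d-1}, 2B^{d-1}]$ — on which $g_{\ell+1}$ is continuous and bounded below by a positive constant — uniform continuity of $g_{\ell+1}$ gives $g_{\ell+1}(p\prod_{i\neq j}m_i) \le (1+\delta)\,g_{\ell+1}(q\prod_{i\neq j}m_i)$ once $p$ is sufficiently small. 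Hence $\frac{1}{p^{1/(d-1)}}U_{g_{\ell+1}}(R,R') \le (1+\delta)\sum_j g_{\ell+1}(q\prod_{i\neq j}m_i)\, s_j$.

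Finally, substituting this into \eqref{DRR} and using that $e^{-x}$ is decreasing,
$$P_p(R,R') \; \le \; \exp\bigg( -(1-2\delta)\sum_{j=1}^d g_{\ell+1}\Big(q\prod_{i\neq j}m_i\Big) s_j \bigg) \; \le \; \exp\left( -\frac{1-2\delta}{1+\delta}\cdot\frac{U_{g_{\ell+1}}(R,R')}{p^{1/(d-1)}} \right),$$
which is the claimed bound up to the harmless factor $(1-2\delta)/(1+\delta)$. Since $\delta$ (and then $k$, $Z$, $T$) are chosen freely, one simply runs Lemma~\ref{DRR'} and the estimate above with a slightly smaller value of $\delta$ so that this factor exceeds the desired $1-2\delta$. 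The only point needing any care is the $p \leftrightarrow q$ comparison inside $g_{\ell+1}$, and as noted it is absorbed by the slack $\delta$; everything of substance is already contained in Lemma~\ref{DRR'} and Observation~\ref{Uanaltriv}.
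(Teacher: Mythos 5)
Your proof is correct and follows the paper's own route: Observation~\ref{Uanaltriv} applied at the points $p^{1/(d-1)}\dim(R)$ and $p^{1/(d-1)}\dim(R')$, combined with the definition~\eqref{Udef} and exponentiating the bound~\eqref{DRR}. Your additional comparison of $g_{\ell+1}\big(p\prod_{i\neq j}m_i\big)$ with $g_{\ell+1}\big(q\prod_{i\neq j}m_i\big)$ is a legitimate (and in fact slightly more careful) treatment of a $p$-versus-$q$ discrepancy that the paper absorbs silently when it writes its displayed inequality with $q$ in the argument, and absorbing the resulting factor $1+\delta$ by invoking Lemma~\ref{DRR'} with a slightly smaller $\delta$ is harmless.
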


Next we bound the probability that a seed is internally spanned. Recall that $\phi(R)$ denotes the semi-perimeter of a rectangle $R$. 

\begin{lemma}\label{seeds}
Let $d,\ell,k \in \N$, let $B, \alpha > 0$, and let $Z  = Z(d,\ell,B,\alpha,k) > 0$ be sufficiently small. Let $p > 0$, and let $R$ be a rectangle in $C([n]^d \times [k]^\ell,2)$. Suppose $\sht(R) \le Z/p^{1/(d-1)}$ and $\lg(R) \le B/p^{1/(d-1)}$. Then
$$\Pr_p(R \in \< A \cap R \>) \; \le \; e^{- \alpha \phi(R)}.$$
\end{lemma}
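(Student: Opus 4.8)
The plan is to show that internal spanning of $R$ already forces $R$ to be left-to-right crossed by $A$ in whichever grid direction realises $\lg(R)$, and then to invoke Lemma~\ref{crossR}. The key point is that the ``cross-section'' of such a crossing --- the product of the remaining $d-1$ side-lengths of $R$ --- always contains the factor $\sht(R)\le Z/p^{1/(d-1)}$, so the argument $u_d(R)$ of $\beta_{\ell+1}$ is at most of order $ZB^{d-2}$, which, with $B$ already fixed, can be made as small as we please by shrinking $Z$. Since $\beta_{\ell+1}$ is continuous and increasing on $[0,1]$ with $\beta_{\ell+1}(0)=0$, this drives the base of the bound produced by Lemma~\ref{crossR} arbitrarily close to $0$, which is exactly what the target estimate $e^{-\alpha\phi(R)}$ demands.

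In detail, I would proceed as follows. First relabel the grid coordinates so that $\lg(R)$ is the side-length of $R$ in direction $d$; writing $a_1,\dots,a_d$ for the side-lengths, we then have $a_d=\lg(R)\le B/p^{1/(d-1)}$, $a_i\le a_d$ for every $i$, and $\min_i a_i=\sht(R)\le Z/p^{1/(d-1)}$. Second, check the deterministic implication $\{R\in\<A\cap R\>\}\subseteq H^{\rightarrow(d)}(R)$: if $C$ is the component of $[A\cap R]$ with $R(C)=R$, then $C$ meets both extreme values of the $d$-th coordinate and, being connected, contains a path joining them; as $C\subseteq[A\cap R]\subseteq[A']$ with $A'=(A\cap R)\cup\{\x:x_d\le a_d-1\}$, this path witnesses $H^{\rightarrow(d)}(R)$. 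Hence $\Pr_p\big(R\in\<A\cap R\>\big)\le\Pr_p\big(H^{\rightarrow(d)}(R)\big)$. Third, apply Lemma~\ref{crossR} (by symmetry, in direction $d$) with $\delta=1/2$ and $t=0$; its hypotheses hold since $a_i\le B/p^{1/(d-1)}$ for all $i\ne d$, $p$ is small, and $k$ is large (as it is in every application of the present lemma). This gives
$$\Pr_p\big(H^{\rightarrow(d)}(R)\big) \; \le \; \Big(\beta_{\ell+1}\big(u_d(R)\big)\Big)^{a_d/2}, \qquad \text{where } u_d(R)=u\Big(\prod_{i\ne d}a_i\Big).$$
Fourth, estimate $\prod_{i\ne d}a_i$: if $\sht(R)<\lg(R)$ the minimising coordinate lies among the $i\ne d$, and if $\sht(R)=\lg(R)$ then every $a_i\le Z/p^{1/(d-1)}$, so in either case
$$\prod_{i\ne d}a_i \; \le \; \frac{Z}{p^{1/(d-1)}}\cdot\Big(\frac{B}{p^{1/(d-1)}}\Big)^{d-2} \; = \; \frac{ZB^{d-2}}{p},$$
whence $u_d(R)\le q\prod_{i\ne d}a_i\le 2ZB^{d-2}$ for $p$ small, using $q\le 2p$ and $u(x)\le qx$. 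Finally, choose $Z=Z(d,\ell,B,\alpha,k)>0$ so small that $2ZB^{d-2}<1$ and $-\log\beta_{\ell+1}\big(2ZB^{d-2}\big)\ge 2\alpha d$; then, since $\beta_{\ell+1}$ is increasing and $\phi(R)\le d\,\lg(R)=d\,a_d$,
$$\Pr_p\big(R\in\<A\cap R\>\big) \; \le \; \Big(\beta_{\ell+1}\big(2ZB^{d-2}\big)\Big)^{a_d/2} \; = \; \exp\Big(-\tfrac{a_d}{2}\,\big(-\log\beta_{\ell+1}(2ZB^{d-2})\big)\Big) \; \le \; e^{-\alpha d\,a_d} \; \le \; e^{-\alpha\phi(R)}.$$

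I do not expect a genuine obstacle here: the computation is short and every ingredient is already available. The two points that require care are (i) verifying that ``internal spanning $\Rightarrow$ crossing in the long direction'' is compatible with the threshold structure of $C([n]^d\times[k]^\ell,2)$ --- it is, since the crossing path lies inside the same closure $[A\cap R]$ --- and (ii) making the estimate uniform over \emph{all} seed shapes. For (ii) it is essential not to assume that $\lg(R)/\sht(R)$ is large: the smallness must be carried by the whole product $\prod_{i\ne d}a_i$ rather than by $\sht(R)$ alone, the point being simply that this product always retains $\sht(R)$ as a factor, hence is at most $ZB^{d-2}/p$ whatever the geometry. A last, purely cosmetic, point is that Lemma~\ref{crossR} is invoked with ``$k$ large'', which is consistent with the constant hierarchy $1\ll B\ll k$ under which Lemma~\ref{seeds} is used.
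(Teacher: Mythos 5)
Your argument is correct, but it is genuinely different from the paper's. The paper proves Lemma~\ref{seeds} in two lines: if $R\in\<A\cap R\>$ then $R$ contains no \emph{double gap} (no two adjacent empty hyperplanes orthogonal to the long direction, by Lemma~27 of~\cite{d=r=3}), and a union bound over the $\ge\lg(R)/2$ disjoint pairs of adjacent hyperplanes, each occupied with probability at most $2k^\ell\lg(R)^{d-2}\sht(R)\,p\le 2k^\ell B^{d-2}Z$, gives $\big(2k^\ell B^{d-2}Z\big)^{\phi(R)/2d}\le e^{-\alpha\phi(R)}$ once $Z$ is small. You instead observe that internal spanning forces a left-to-right crossing in the long direction (a correct deterministic reduction: the spanning component touches both extreme faces and is connected, and lies in $[A']$ by monotonicity) and then invoke the crossing machinery of Lemma~\ref{crossR}, exploiting, as the paper does, that the transverse product retains the factor $\sht(R)$ and so $u_d(R)\le 2ZB^{d-2}$ is tiny. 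The trade-off is that Lemma~\ref{crossR} carries the hypotheses ``$k$ sufficiently large'' and ``$p$ sufficiently small'', so as written you prove a formally weaker statement than the lemma, which is asserted for every $k\in\N$ and every $p>0$; the $p$-smallness is in fact harmless (choosing $Z$ small enough, the hypothesis $\sht(R)\le Z/p^{1/(d-1)}$ is vacuous unless $p\le Z^{d-1}$, so the statement is automatic for larger $p$ --- worth saying explicitly rather than appealing to ``$p$ is small in applications''), and the $k$-largeness is compatible with the only use of the lemma, in Theorem~\ref{2r2tech}, where $1\ll B\ll k$. What the paper's route buys is an elementary, self-contained estimate uniform in $k$ and $p$ that does not lean on Section~\ref{crosssec}; what yours buys is economy of ideas, reusing the already-proved crossing bound and avoiding the double-gap lemma imported from~\cite{d=r=3}.
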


\begin{proof}
Let $\dim(R) = (u_1,\ldots,u_d)$, and suppose without loss of generality that $u_1 = \lg(R)$ and $u_2 = \sht(R)$. Note that if $R \in \< A \cap R \>$, then $R$ has no `double gap', i.e., no pair of adjacent empty hyperplanes (see Lemma~27 of~\cite{d=r=3}). Thus, 
$$\Pr\big( R \in \< A \cap R \> \big) \; \le \; \Big(  2 k^\ell u_1^{d-2} u_2 p \Big)^{u_1/2} \; \le \; \Big( 2k^\ell B^{d-2} Z \Big)^{\phi(R)/2d} \; \le \; e^{-\alpha \phi(R)}$$
if $2k^\ell B^{d-2} Z \le e^{-2d \alpha}$, which holds if $Z > 0$ is sufficiently small, as required.
\end{proof}

Finally, we recall the following lemma from~\cite{d=r=3} (see also~\cite{AL}).

\begin{lemma}[Lemma~16 of~\cite{d=r=3}]\label{k2k}
Let $A \subset C([n]^d \times [k]^\ell,2)$. If $1 \le L \le \diam([A])$, then there exists a rectangle $R$, internally spanned by $A$, with
$$L \; \le \; \textup{long} (R) \; \le \; 2L.$$
\end{lemma}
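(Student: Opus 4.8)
The plan is to run the standard deterministic argument of Aizenman and Lebowitz~\cite{AL}, adapted to the threshold structure of $C([n]^d\times[k]^\ell,2)$; it has three steps, the first of which is the easy \emph{reduction from diameter to a large internally spanned rectangle.} If $\diam([A])\ge L$, then there are $x,y$ in a common connected component $C$ of $[A]$ with $\|x-y\|_\infty+1\ge L$, so $\lg(R(C))\ge L$. I would then check that $R(C)$ is internally spanned: any infected neighbour of a vertex $v\in C$ lies in the same component of $[A]$ as $v$, hence in $C$, so by induction on the infection time $C\subseteq[A\cap C]\subseteq[A\cap R(C)]$; since $[A\cap R(C)]\subseteq[A]$ and $C$ is a component of $[A]$, it is also a component of $[A\cap R(C)]$, whence $R(C)\in\<A\cap R(C)\>$. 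Thus the family of internally spanned rectangles with $\lg\ge L$ is non-empty.

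\emph{Step 2: an extremal choice.} Among all internally spanned rectangles $R$ with $\lg(R)\ge L$, I would pick one, $R^*$, of minimum semi-perimeter $\phi(R^*)$, and claim $\lg(R^*)\le 2L$ — which is the lemma. If not, then $\lg(R^*)\ge 2L+1\ge 3$, and I would invoke the following \emph{splitting lemma}: every internally spanned rectangle $R$ with $\lg(R)\ge 2$ contains a proper internally spanned sub-rectangle $R'$ with $\phi(R')\le\phi(R)-1$ and $\dim(R')_j\ge\tfrac12(\lg(R)-1)$, where $j$ is a longest direction of $R$. Applied to $R^*$ this produces an internally spanned $R'\subseteq R^*$ with $\lg(R')\ge\dim(R')_j\ge\tfrac12(\lg(R^*)-1)\ge L$ and $\phi(R')<\phi(R^*)$, contradicting the choice of $R^*$.

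\emph{Step 3: proving the splitting lemma.} Let $C$ be the component of $[A\cap R]$ with $R(C)=R$ and put $\tilde A=A\cap C$; since all thresholds of $C([n]^d\times[k]^\ell,2)$ are at least $2$ and $\lg(R)\ge 2$, we have $|\tilde A|\ge 2$. I would run the bootstrap process on $\tilde A$ one vertex at a time, $\tilde A=Y_0\subsetneq\dots\subsetneq Y_\tau=C$, and take the first index $i^*$ at which some component of $Y_{i^*}$ has bounding box $R$ (the degenerate case $i^*=0$, where a seed cluster already has box $R$, is handled directly by walking along a path in that cluster). Adjoining the single vertex $v=v_{i^*}$ merges certain components $X_1,\dots,X_s$ of $Y_{i^*-1}$ — each with box a proper sub-rectangle of $R$ and each containing a seed — together with $\{v\}$ into the new component $X^*$. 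The deterministic heart is that each $R(X_l)$ meets the $\ell_\infty$-ball of radius $1$ about $v$, so its projection to each coordinate $j$ lies in $[\,v_j-\dim(R(X_l))_j,\,v_j+\dim(R(X_l))_j\,]$; taking the union over $l$ and adjoining $v_j$ gives $\dim(R)_j=\dim(R(X^*))_j\le 2\max_l\dim(R(X_l))_j+1$. With $j$ a longest direction of $R$, some $X_{l^*}$ satisfies $\dim(R(X_{l^*}))_j\ge\tfrac12(\lg(R)-1)$, and $R':=R([X_{l^*}])$ is internally spanned (using $[X_{l^*}]=[A\cap X_{l^*}]$) with $\dim(R')_j\ge\tfrac12(\lg(R)-1)$. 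If $\phi(R')\le\phi(R)-1$ we are done; otherwise $[X_{l^*}]$ already has box $R$, and then either $X_{l^*}$ omits a seed, so $R$ is internally spanned by the strictly smaller seed set $A\cap X_{l^*}$ and one recurses, or $\tilde A$ lies inside a single component of $Y_{i^*-1}$ and one instead splits $\tilde A$ along a spanning tree — so an auxiliary induction on $|\tilde A|$ (with $|\tilde A|=2$ checked by hand) closes the argument.

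The hard part will be the splitting lemma, and specifically the bookkeeping of the degenerate configurations at step $i^*$ (when the closure of a single piece already fills $R$) together with the termination of the auxiliary induction on the number of seeds; this is essentially the piece inherited from~\cite{d=r=3}. The component reduction of Step~1, the extremal argument of Step~2, and the interval arithmetic behind the bounding-box merge estimate are all routine.
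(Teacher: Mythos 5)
Your argument is correct, and its engine is the same one the paper relies on: the lemma is quoted from~\cite{d=r=3}, and the in-paper analogue (the proof of Lemma~\ref{smallcompt}) is exactly the ``add the newly infected sites one at a time and note that a single new site can at most double the relevant quantity, plus one'' argument, which is what your Step~3 reconstructs for bounding boxes of components. Your outer packaging (minimal semi-perimeter counterexample plus a splitting lemma) differs cosmetically from the more direct ``stop at the first step at which the longest side reaches $L$'' formulation, but buys and costs nothing essential. Two remarks. First, the degenerate branch you flag as the hard part is in fact vacuous: since every site outside a rectangle has at most one neighbour inside it, while all thresholds in $C([n]^d\times[k]^\ell,2)$ are at least $2$, closures never escape bounding rectangles, so $R([X_{l^*}])=R(X_{l^*})$; and by the minimality of $i^*$ every component of $Y_{i^*-1}$ has bounding box a \emph{proper} sub-rectangle of $R$, whence $\phi(R')\le\phi(R)-1$ automatically. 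Thus the auxiliary induction on $|\tilde A|$ (splitting along a spanning tree, recursing on seed sets) --- the only under-specified part of your write-up --- can simply be deleted, leaving only the genuine degenerate case $i^*=0$, which your path-walking argument handles. Second, a small caveat in Step~1: $\diam$ is measured in all $d+\ell$ coordinates while $\lg(R)$ sees only the first $d$, so the inference $\diam([A])\ge L\Rightarrow\lg(R(C))\ge L$ needs the $\ell_\infty$-distance to be attained in one of the first $d$ coordinates, i.e.\ effectively $L>k$; this is harmless where the lemma is applied ($L=\Theta(\log n)\gg k$), and the same implicit restriction is already present in the statement itself.
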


We are ready to prove Theorem~\ref{2r2tech}. 

\begin{proof}[Proof of Theorem~\ref{2r2tech}]
Let $d,\ell \in \N$, with $d \ge 2$, and let $\eps > 0$. We choose positive constants $B$, $\alpha$, $\delta$, $k$, $Z$ and $T$ (chosen in that order), with $B > 0$ sufficiently large, $\delta > 0$ sufficiently small, and $k$, $Z$ and $T$ chosen so that Lemmas~\ref{DRR'} and~\ref{seeds} hold. In particular, let $\alpha = d \lambda(d+\ell,\ell+2)B$, and note that 
$$T \ll Z \ll \delta \ll 1 \ll B \ll k.$$
Finally, we let $p \to 0$, so that $p \ll T$. Let $\hat{T} = T/p^{1/(d-1)}$ and $\hat{Z} = Z/p^{1/(d-1)}$.

Let $R$ be a rectangle in $G = C([n]^d \times [k]^\ell,2)$ with $\dim(R) = (b_1,\ldots,b_d)$, let $\lg(R) = B/p^{1/(d-1)}$, and assume without loss of generality that $b_1 \le \ldots \le b_d$. By Corollary~\ref{DRRcor} and Lemmas~\ref{basic} and~\ref{seeds}, we obtain
\begin{align}
& \Pr_p\big(R \in \< A \cap R \>\big) \; \le \; \sum_{\HH \in \HH(R,\hat{T},\hat{Z})} \left( \prod_{N_{G_\HH}^\->(u) = \{v\}} P_p(R_v,R_u) \right) \left( \prod_{\textup{ seeds $u \in \HH$}} P_p(R_u) \right) \nonumber\\
& \hspace{1cm} \le \; \sum_{\HH \in \HH(R,\hat{T},\hat{Z})} \exp\left( -\sum_{N_{G_\HH}^\->(u) = \{v\}} \frac{(1 - 2\delta)U_{g_{\ell+1}}(R_v,R_u)}{p^{1/(d-1)}} - \alpha \sum_{\textup{seeds $u \in \HH$}} \phi(R_u) \right) \label{T2eq1}
\end{align}
For each hierarchy $\HH \in \HH(R,\hat{T},\hat{Z})$, let
$$Q(\HH) \; := \; \exp\left( -\sum_{N_{G_\HH}^\->(u) = \{v\}} \frac{(1 - 2\delta)U_{g_{\ell+1}}(R_v,R_u)}{p^{1/(d-1)}} - \alpha \sum_{\textup{seeds $u \in \HH$}} \phi(R_u) \right).$$
The theorem will follow easily from~\eqref{T2eq1}, Lemma~\ref{fewhiers} and the following claim.

\medskip
\noindent \textbf{Claim}: $Q(\HH) \le  \exp\left( - \ds\frac{d\lambda(d+\ell,\ell+2) - \eps}{p^{1/(d-1)}} \right)$ for every $\HH \in \HH(R,\hat{T},\hat{Z})$.

\begin{proof}[Proof of claim]
We shall consider three cases. First, suppose that $\HH$ has `many' seeds.

\medskip
\noindent \textbf{Case 1}: $\ds\sum_{\textup{ seeds }u} \phi(R_u) \ge \ds\frac{1}{Bp^{1/(d-1)}}$.

\medskip
In this case it is sufficient to consider only the second term in $Q(\HH)$. Indeed, 
$$\exp\left( - \alpha \sum_{\textup{ seeds }u} \phi(R_u) \right) \; \le \; \exp\left(- \frac{\alpha}{Bp^{1/(d-1)}} \right) \; = \; \exp\left( -\ds\frac{d\lambda(d+\ell,\ell+2)}{p^{1/(d-1)}} \right)$$
if $p > 0$ is sufficiently small, since $\alpha = d\lambda(d+\ell,\ell+2)B$, as required.

\medskip
Next, suppose that $R$ is unusually `long and thin'. Let $\grr(R) = p \prod_{j=2}^{d} b_j$, and recall that $b_d = B/p^{1/(d-1)}$, and that $B$ is chosen to be sufficiently large.

\medskip
\noindent \textbf{Case 2}: $\ds\sum_{\textup{ seeds }u} \phi(R_u) \le \ds\frac{1}{Bp^{1/(d-1)}}$ and $\grr(R) \le 2\log(Bd)$.

\medskip
In this case we consider only the first term in $Q(\HH)$. Let $S = S(\HH)$ be the pod of $\HH$, given by Proposition~\ref{pod}. Note that $\HH$ has bounded height (in terms of $B$, $d$ and $T$), and hence that $|G_\HH|$ is bounded (as $p \to 0$). Hence, by Proposition~\ref{pod}, we have
\begin{equation}\label{T2eq2}
Q(\HH) \; \le \; \exp\left( - \frac{ (1 - 2\delta) U_{g_{\ell+1}}(S,R)}{p^{1/(d-1)}}  + M_1 \right), 
\end{equation}
for some constant $M_1 = M_1(d,\ell,B,Z,T)$. 

Next, note that $b_d/b_1 \ge \sqrt{B}$, since $\grr(R) \le 2\log(Bd) \le \sqrt{B}$ and so 
$$b_1 \; \le \; \left( \frac{\grr(R)}{p} \right)^{1/(d-1)} \; \le \; \left( \frac{\sqrt{B}}{p} \right)^{1/(d-1)} \; \le \; \frac{\sqrt{B}}{p^{1/(d-1)}},$$ 
whereas $b_d = B/p^{1/(d-1)}$. Recall~\eqref{Udef}, the definition of $U_{g_{\ell+1}}(S,R)$, and recall also that $\phi(S) \le \sum_{\textup{seeds}} \phi(R_u)$, and that $g_{\ell+1}$ is decreasing. We obtain
$$U_{g_{\ell+1}}(S,R) \; \ge \; \Big( B - \phi(S) p^{1/(d-1)} \Big) g_{\ell+1}\bigg( q \prod_{j = 1}^{d-1} b_j \bigg) \; \ge \; \big( B - 1 \big) g_{\ell+1}\left( \frac{4\log(Bd)}{\sqrt{B}} \right) \; \ge \; 2B,$$
if $B > 0$ is sufficiently large. The first inequality above follows by considering growth only in direction $d$. For the second step, note that $q \prod_{j = 1}^{d-1} b_j \le  2 \cdot \grr(R) (b_1/b_d)$, and use the upper bounds on $\sum_{\textup{seeds}} \phi(R_u)$, $\grr(R)$ and $b_1/b_d$. The final inequality holds if $B$ is sufficiently large, since $g_{\ell+1}(z)\to \infty$ as $z \to 0$.

Thus, combining this bound with~\eqref{T2eq2}, we deduce that
$$Q(\HH) \; \le \; \exp\left( - \frac{B}{p^{1/(d-1)}}  + M_1 \right) \; < \; \exp\left( -\frac{d\lambda(d+\ell,\ell+2)}{p^{1/(d-1)}} \right)$$
if $B > 0$ is sufficiently large and $p > 0$ is sufficiently small, as required.

\medskip
Finally, we arrive at the main case.

\medskip
\noindent \textbf{Case 3}: $\ds\sum_{\textup{ seeds }u} \phi(R_u) \le \ds\frac{1}{Bp^{1/(d-1)}}$ and $\grr(R) \ge 2\log(Bd)$.

\medskip
The inequality~\eqref{T2eq2} again follows from Proposition~\ref{pod} and the definition of $Q(\HH)$, exactly as in Case~2. By Proposition~\ref{minW} (applied to the vectors $\a = p^{1/(d-1)} \dim(S)$ and $\b = p^{1/(d-1)} \dim(R)$), we have
$$U_{g_{\ell+1}}(S,R) \; \ge \; d \int_{1/B}^B g_{\ell+1} \big( z^{d-1} \big) \, \dz \: - \: (Bd) g_{\ell+1}\Big( p \prod_{j = 2}^d b_j \Big),$$
and hence, recalling that $\grr(R) \ge 2\log(Bd)$ and noting that $\delta / p^{1/(d-1)} \gg M_1$,
$$Q(\HH) \; \le \; \exp\left[ - \frac{(1 - 3\delta)}{p^{1/(d-1)}} \left( d \int_{1/B}^B g_{\ell+1} \big( z^{d-1} \big) \, \dz \: - \: (Bd) g_{\ell+1}\Big( 2\log(Bd) \Big) \right) \right].$$
But $g_{\ell+1}(z) \le 2e^{-(\ell+1)z}$ for large $z$ (see~\cite{d=r=3}), so
\begin{eqnarray*}
Q(\HH) & \le & \exp\left[ - \frac{(1 - 3\delta)}{p^{1/(d-1)}} \left( d \int_{1/B}^B g_{\ell+1} \big( z^{d-1} \big) \, \dz \: - \: \frac{2}{Bd} \right) \right]\\
& \le & \exp\left( -\frac{d\lambda(d+\ell,\ell+2) - \eps}{p^{1/(d-1)}} \right)
\end{eqnarray*}
if $B$ is sufficiently large, and $\delta$ is sufficiently small, as required.
\end{proof}

To complete the proof of Theorem~\ref{2r2tech}, recall that, by Lemma~\ref{fewhiers},
$$|\HH(R,\hat{T},\hat{Z})| \; \le \; M_2\left( \frac{1}{p} \right)^{M_2}.$$
for some constant $M_2 = M_2(B,T,d,\ell)$. Hence, by~\eqref{T2eq1} and the claim,
\begin{align*}
& \Pr_p\big(R \in \< A \cap R \>\big) \; \le \; \sum_{\HH \in \HH(R,\hat{T},\hat{Z})} Q(\HH) \; \le \; M_2\left( \frac{1}{p} \right)^{M_2} \max_{\HH \in \HH(R,\hat{T},\hat{Z})} Q(\HH)\\
& \hspace{1cm} \le \;  M_2\left( \frac{1}{p} \right)^{M_2}  \exp\left( - \ds\frac{d\lambda(d+\ell,\ell+2) - \eps}{p^{1/(d-1)}} \right) \; \le \;  \exp\left( - \ds\frac{d\lambda(d+\ell,\ell+2) - 2\eps}{p^{1/(d-1)}} \right)
\end{align*}
if $p > 0$ is sufficiently small. Since $\eps > 0$ was arbitrary, the theorem follows.
\end{proof}

We complete this section by deducing the following corollary of Theorem~\ref{2r2tech}, which is the technical statement which we shall need in Section~\ref{T1PfSec}. 

\begin{cor}\label{2r2cor}
Let $d,\ell \in \N$, with $d \ge 2$. If $\eps > 0$ is sufficiently small, then there exist 
$B > 0$ and $k_0 = k_0(B) > 0$ such that, if $k \ge k_0$ and $n \ge n_0(B,d,k,\ell,\eps)$ is sufficiently large, then the following holds. Let $G = C([n]^d \times [k]^\ell,2)$, and let
$$p \,=\, p(n) \,\le\, \left( \ds\frac{\lambda(d+\ell,\ell+2) - \eps}{\log n} \right)^{d-1}.$$ 
Then
$$\Pr_p\Big( \diam([A]) \ge B \log n \Big) \; \le \; n^{-\eps}.$$
\end{cor}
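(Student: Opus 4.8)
The plan is to combine Theorem~\ref{2r2tech} with the Aizenman--Lebowitz lemma (Lemma~\ref{k2k}) and a union bound. First I would fix $\eps > 0$ small, and choose $B = B(d,\ell,\eps)$ and $k_0(B)$ as supplied by Theorem~\ref{2r2tech} (applied with error parameter, say, $\eps/2$), so that for $k \ge k_0$ and $p > 0$ sufficiently small, every rectangle $R \subset C([n]^d \times [k]^\ell,2)$ with $\lg(R) = B/p^{1/(d-1)}$ satisfies
$$\Pr_p\big( R \in \< A \cap R \> \big) \; \le \; \exp\left( - \frac{d\lambda(d+\ell,\ell+2) - \eps/2}{p^{1/(d-1)}} \right).$$
The key point is that when $p \le \big( (\lambda(d+\ell,\ell+2) - \eps)/\log n \big)^{d-1}$, we have $1/p^{1/(d-1)} \ge \log n / (\lambda(d+\ell,\ell+2) - \eps)$, so the exponent above is at most
$$-\,\frac{d\lambda(d+\ell,\ell+2) - \eps/2}{\lambda(d+\ell,\ell+2) - \eps}\,\log n \; = \; -\big( d + \eta \big)\log n$$
for some $\eta = \eta(d,\eps) > 0$, provided $\eps$ is small enough that $\lambda(d+\ell,\ell+2) - \eps > 0$; here one uses $d\lambda - \eps/2 > d(\lambda - \eps)$, i.e. $d\eps > \eps/2$, which holds since $d \ge 2$. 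Thus $\Pr_p\big( R \in \< A \cap R \> \big) \le n^{-d-\eta}$ for every such rectangle $R$. (One should also check that $p = p(n) \to 0$, so that "$p$ sufficiently small" is eventually satisfied as $n \to \infty$; this is immediate since $\log n \to \infty$.)

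Next I would apply Lemma~\ref{k2k} with $L = \frac{B}{2}\log n$: if $\diam([A]) \ge B\log n \ge L$, then there exists a rectangle $R$, internally spanned by $A$, with $L \le \lg(R) \le 2L = B\log n$. This is not quite a single length, so I would instead apply Lemma~\ref{k2k} with the specific value $L' = B/(2p^{1/(d-1)})$; since $p^{1/(d-1)} \le (\lambda(d+\ell,\ell+2)-\eps)/\log n$ we have $L' \ge \frac{B}{2}\cdot\frac{\log n}{\lambda(d+\ell,\ell+2)-\eps}$, and one can check $2L' \le B\log n$ fails in general — so the cleaner route is: by Lemma~\ref{k2k} applied with $L = \lceil B\log n \rceil$ there is an internally spanned $R$ with $B\log n \le \lg(R) \le 2B\log n$, and then by restricting to a sub-rectangle (or re-applying Lemma~\ref{k2k} at a scale matching $B/p^{1/(d-1)}$) we obtain an internally spanned rectangle $R'$ with $\lg(R') = B/p^{1/(d-1)}$, noting $B\log n \le B/p^{1/(d-1)} \le 2B\log n$ since $p^{1/(d-1)} \asymp 1/\log n$ (more precisely $\tfrac{1}{2}\log n \le 1/p^{1/(d-1)} \le \log n$ for large $n$, after adjusting constants). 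The number of rectangles in $G$ with a given longest side-length is at most $|V(G)|^2 = (n^d k^\ell)^2 \le n^{3d}$ for $n$ large. Hence by the union bound,
$$\Pr_p\big( \diam([A]) \ge B\log n \big) \; \le \; n^{3d} \cdot n^{-d-\eta} \; = \; n^{2d - \eta},$$
which is not good enough as stated — so the bound $d + \eta$ must be improved to $3d + 1 + \eps$. This is achieved by taking $\eps$ small: the exponent $\frac{d\lambda - \eps/2}{\lambda - \eps}$ can be made as close to $d$ from above as we like, but never exceeds, say, $d+1$; instead the real gain comes from choosing $B$ large. The honest fix is that Theorem~\ref{2r2tech} gives $\exp(-(d\lambda-\eps')/p^{1/(d-1)})$ for \emph{any} $\eps' > 0$, and with $1/p^{1/(d-1)} \ge \log n/(\lambda-\eps)$ this is $n^{-(d\lambda-\eps')/(\lambda-\eps)}$; since $\lambda > 0$ we may first choose $\eps$ with $\eps < \lambda \eps'/(\text{large constant})$ so that $(d\lambda-\eps')/(\lambda-\eps)$ is as large as needed, in particular $\ge 3d+1+\eps$. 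Then $\Pr_p(\diam([A]) \ge B\log n) \le n^{3d}\cdot n^{-(3d+1+\eps)} \le n^{-\eps}$ for $n$ large.

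The main obstacle is the bookkeeping of constants: one must interleave the choices of $\eps$, $B$, $k$, and the "$p$ sufficiently small" threshold consistently with the hypotheses of Theorem~\ref{2r2tech}, and ensure that the factor $1/p^{1/(d-1)}$ really is bounded below by a constant multiple of $\log n$ at the chosen scale, so that the per-rectangle probability beats the polynomial count $n^{3d}$ of rectangles with a margin of $n^{-\eps}$. All of this is routine once the right order of quantifiers is fixed; the only genuinely new input, Theorem~\ref{2r2tech}, is already in hand.
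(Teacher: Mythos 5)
Your overall strategy (Theorem~\ref{2r2tech} combined with Lemma~\ref{k2k} and a union bound) is exactly the paper's, but the final step as you wrote it fails, and your proposed repair is not correct. The per-rectangle exponent coming from Theorem~\ref{2r2tech} is $(d\lambda-\eps')/(\lambda-\eps)$, and this tends to $d$ as $\eps,\eps'\to 0$: making $\eps$ smaller \emph{increases} the denominator $\lambda-\eps$, so it pushes the ratio \emph{down} towards $d$, never up towards $3d+1+\eps$; for $\eps<\lambda/2$ the ratio is below $2d$, so it can never beat a rectangle count of $n^{2d}$ or $n^{3d}$. The paper's resolution is a sharper entropy count: the internally spanned rectangle produced by Lemma~\ref{k2k} has \emph{all} side-lengths at most $B_1\log n$, so it is determined by one corner together with its dimensions, giving at most $n^d (B_1\log n)^d \le n^{d+\eps'}$ candidates, not $n^{2d}$ or $n^{3d}$. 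One then only needs the per-rectangle exponent to exceed $d+\eps+\eps'$, and this is where Proposition~\ref{<d/2} enters: since $\lambda(d+\ell,\ell+2)<(d+1)/2< d$, one has $(d\lambda-\eps')/(\lambda-\eps)=d+(d\eps-\eps')/(\lambda-\eps)\ge d+\eps+\eps'$ once $\eps'$ is chosen small compared with $\eps$. Your observation that ``$d\eps>\eps/2$'' only shows the exponent exceeds $d$; to see that the margin exceeds $\eps$ (plus the $(\log n)^d$ entropy) you need $\lambda\le d-\tfrac12$, i.e.\ Proposition~\ref{<d/2}, which your write-up never invokes, and without the $n^{d+o(1)}$ count the margin is useless anyway.

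Two further points need repair. First, $p$ is only bounded \emph{above}, so the two-sided claim ``$\tfrac12\log n\le 1/p^{1/(d-1)}\le\log n$'' is unjustified; the paper first replaces $p$ by the threshold value $\big((\lambda-\eps)/\log n\big)^{d-1}$, which is legitimate because $\{\diam([A])\ge B\log n\}$ is an increasing event, so its probability is monotone in $p$. Without this reduction, for very small $p$ the scale $B/p^{1/(d-1)}$ no longer matches $B\log n$ and Theorem~\ref{2r2tech} does not apply at the Aizenman--Lebowitz scale. Second, you cannot ``restrict to a sub-rectangle'' to force $\lg(R')=B/p^{1/(d-1)}$: internal spanning is not inherited by sub-rectangles. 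After the monotone reduction this issue disappears: the rectangle from Lemma~\ref{k2k} with $(B_1/2)\log n\le\lg(R')\le B_1\log n$ automatically has $\lg(R')=B/p^{1/(d-1)}$ for some $B\in[B_0,2B_0]$, and Theorem~\ref{2r2tech} applies uniformly for all such $B$.
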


\begin{proof}
Let $\eps' = \eps'(d,\ell,\eps) > 0$ be sufficiently small, let $B_0 = B_0(\eps')$, $k_0 = k_0(B_0,\eps')$ be chosen according to Theorem~\ref{2r2tech}, and write $\lambda = \lambda(d+\ell,\ell+2)$. Let $n \in \N$ and, noting that the probability is monotone in $p$, let 
$$p \, = \, p(n) \, = \, \left( \ds\frac{\lambda - \eps}{\log n} \right)^{d-1}.$$ 
Let $B_1 = 2B_0 / (\lambda - \eps)$. We shall show that
$$\Pr_p\Big( \lg (R) \ge B_1\log n \textup{ for some } R \in \< A \> \Big) \; \le \; n^{-\eps}.$$
The result will then follow, since $\diam([A]) \le \max \big\{ \lg(R) : R \in \< A \> \big\}$.

Suppose $\lg(R) \ge B_1 \log n$ for some $R \in \< A \>$. By Lemma~\ref{k2k}, there exists an internally spanned rectangle $R' \subset R$ with 
$$\frac{B_1 \log n}{2} \; \le \; \lg (R') \; \le \; B_1 \log n.$$ 
By our choice of $p$, it follows that $\lg(R') = B/p^{1/(d-1)}$ for some $B \in [B_0,2B_0]$. Hence, by Theorem~\ref{2r2tech}, if $n$ is sufficiently large then
$$\Pr_p\big( R' \in \< A \cap R' \> \big) \; \le \; \exp\left( - \frac{d\lambda - \eps'}{p^{1/(d-1)}} \right) \; \le \; \exp\left( - \left( \frac{d\lambda - \eps'}{\lambda - \eps} \right) \log n \right) \; \le \; n^{-(d+\eps+\eps')}.$$
The last inequality holds since $\eps > 0$ and $\eps'  = \eps'(d,\ell,\eps) > 0$ were chosen sufficiently small, and because $\lambda(d+\ell,\ell+2) < (d+1)/2 < d$, by Proposition~\ref{<d/2}.

There are at most $(B\log n)^d n^d \le n^{d+ \eps'}$ potential such rectangles $R'$. So, writing $Y(B_1)$ for the number of internally spanned rectangles $R' \subset C([n]^d \times [k]^\ell,2)$ with $(B_1/2) \log n \le \lg (R') \le B_1 \log n$, we get
\begin{eqnarray*}
\Pr_p\big( \lg (R) \ge B_1\log n \textup{ for some } R \in \< A \> \big) \; \le \; \Ex_p \big( Y(B_1) \big) \; \le \; n^{-\eps}
\end{eqnarray*}
as required.
\end{proof}

It is easy to see that Corollary~\ref{2r2cor} implies Theorem~\ref{r=2}.

\section{The Cerf-Cirillo Method}\label{CCsec}

In this section we shall recall a fundamental technique in the study of bootstrap percolation on $[n]^d$. This technique was introduced by Cerf and Cirillo~\cite{CC}, and later used and refined by Cerf and Manzo~\cite{CM}, Holroyd~\cite{Holddim}, and Balogh, Bollob\'as and Morris~\cite{d=r=3}. We shall use this `Cerf-Cirillo method' in order to prove the induction step in our proof of Theorem~\ref{sharp}.

In order to state the main lemma of this section, we need to recall some definitions from~\cite{d=r=3}. We will be interested in two-coloured graphs, i.e., simple graphs with two types of edges, which we shall label `good' and `bad'. We call such a two-coloured graph `admissible' if it either contains at least one bad edge, or if every component is a clique (i.e., a complete graph). For any set $S$, let
$$\Lambda(S) \; := \; \big\{ \textup{admissible two-coloured graphs with vertex set } S \times [2]\big\}.$$
Now, given $m \in \N$, let
$$\Omega(S,m) \; := \; \big\{\P = (G_1, \ldots, G_m) \,:\, G_t \in \Lambda(S)\textup{ for each }t \in [m]\big\},$$
the set of sequences of two-coloured admissible graphs on $S \times [2]$ of length $m$.
We shall sometimes think of $G_t$ as a coloured graph on $S \times [2t-1,2t]$, and trust that this will cause no confusion. We shall be interested in probability distributions on $\Omega(S,m)$ in which, with high probability, there are bad edges in only very few of the graphs $G_t$.

Now, for each $\P \in \Omega(S,m)$, let $G_\P$ denote the graph with vertex set $S \times [2m]$, and the following edge set $E(G_\P)$ (see, for example, Figure~2).
\begin{eqnarray*}
& (a) & G_\P[S \times \{2y-1,2y\}] = G_y,\hspace{9cm}\\[+1ex]
& (b) & \{(x,2y),(x',2y+1)\} \in E(G_\P) \Leftrightarrow x = x',\\[+1ex]
& (c) & \{(x,y),(x',y')\} \notin E(G_\P)\textup{ if }|y - y'| \ge 2.
\end{eqnarray*}

Edges in $G_\P$ of type $(a)$ are labelled good and bad in the obvious way, to match the label of the corresponding edge in $G_y$. Thus $G_\P$ has three types of edge: good, bad, and unlabelled.

Such a graph $G_\P$, with $S = [3]$ and $m = 4$, is pictured below. Note that, for example, $G_3$ has two edges: $\{(1,1),(2,1)\}$ and $\{(3,1),(3,2)\}$, and that $G_4$ must contain a bad edge.

\vspace{0.2in}
\[ \unit = 1.2cm \hskip -9\unit
\varpt{3000} \pt{0}{0} \pt{0}{-1} \pt{0}{-2}
\point{1.5}{0} {$ \pt{0}{0} \pt{0}{-1} \pt{0}{-2} $}
\point{2.5}{0} {$ \pt{0}{0} \pt{0}{-1} \pt{0}{-2} $}
\point{4}{0} {$ \pt{0}{0} \pt{0}{-1} \pt{0}{-2} $}
\point{5}{0} {$ \pt{0}{0} \pt{0}{-1} \pt{0}{-2} $}
\point{6.5}{0} {$ \pt{0}{0} \pt{0}{-1} \pt{0}{-2} $}
\point{7.5}{0} {$ \pt{0}{0} \pt{0}{-1} \pt{0}{-2} $}
\point{9}{0} {$ \pt{0}{0} \pt{0}{-1} \pt{0}{-2} $}
\medline \dl{1.5}{0}{2.5}{0} \dl{1.5}{-1}{2.5}{-1} \dl{1.5}{-2}{2.5}{-2}
\point{2.5}{0} {$ \dl{1.5}{0}{2.5}{0} \dl{1.5}{-1}{2.5}{-1} \dl{1.5}{-2}{2.5}{-2} $}
\point{5}{0} {$ \dl{1.5}{0}{2.5}{0} \dl{1.5}{-1}{2.5}{-1} \dl{1.5}{-2}{2.5}{-2} $}
\dl{0}{0}{1.5}{0} \dl{0}{-1}{1.5}{0} \dl{0}{-2}{1.5}{0} \dl{0}{-1}{0}{-2}
\dl{0}{0}{0}{-1} \dl{1.5}{-1}{1.5}{-2} \dl{2.5}{-2}{4}{-2} \dl{2.5}{0}{4}{-1} \dl{2.5}{-1}{4}{0}
\dl{5}{0}{5}{-1} \dl{5}{-2}{6.5}{-2} 
\dl{7.5}{-2}{9}{-2}
\bez{0}{0}{-0.5}{-1}{0}{-2} \dl{7.5}{0}{9}{-1} \dl{9}{0}{9}{-1}
\point{0.5}{0.5}{\small $G_1$} \point{3}{0.5}{\small $G_2$}
\point{5.5}{0.5}{\small $G_3$} \point{8}{0.5}{\small $G_4$}
\point{0.9}{-3}{Figure 2: A graph $G_\P$, with $S = [3]$ and $m = 4$.}
\]

Given $G \in \Lambda(S)$, let $E^g(G)$ denote the set of good edges, and $E^b(G)$ denote the bad edges, so that $E(G) = E^g(G) \cup E^b(G)$. If $uv$ is a good edge in $G$, then we shall write $u \sim v$. For each vertex $v = (x,y) \in V(G_\P)$, let
$$\Gamma_\P(v) \; := \; \{ u \in V(G_\P) \,:\, u \sim v \textup{ and } u \neq v \},$$
and let $d_\P(v) = |\Gamma_\P(v)|$. We emphasize that $d_\P(v)$ is the number of \emph{good} edges incident with $v$.

Finally, let $X(\P)$ denote the event that there is a connected path across $G_\P$ (i.e., a path from the set $S \times \{1\}$ to the set $S \times \{2m\}$). Observe that the event $X(\P)$ holds for the graph $G_\P$ depicted in Figure~2.

The following lemma was first stated in~\cite{d=r=3}, but the proof is due to Cerf and Cirillo~\cite{CC}. 

\begin{lemma}[Cerf and Cirillo~\cite{CC}, see Lemma~35 of~\cite{d=r=3}]\label{CClemma}
For each $0 < \alpha < 1/2$ and $\eps > 0$, there exists $\delta > 0$ such that the following holds for all $m \in \N$ and all finite sets $S$ with $\alpha^4 |S|^\eps \ge 1$.

Let $\P = (G_1,\ldots,G_m)$ be a random sequence of admissible two-coloured graphs on $S \times [2]$, chosen according to some probability distribution $f_\Omega$ on $\Omega(S,m)$. Suppose $f_\Omega$ satisfies the following conditions:
\begin{itemize}
\item[$(a)$] Independence: $G_i$ and $G_j$ are independent if $i \neq j$,\\[-2ex]
\item[$(b)$] BK condition: For each $t \in [m]$, $r \in \N$, and each $x_1,y_1,\ldots,x_r,y_r \in V(G_t)$,
$$\Pr\left( \bigwedge_{j=1}^r \big( x_j \sim y_j \big) \wedge \bigwedge_{j \neq j'} \big( x_j \not\sim x_{j'} \big) \wedge \big( E^b(G_t) = \emptyset \big) \right) \; \le \; \prod_{j=1}^r \Pr\big( x_j \sim y_j \big),$$
\end{itemize}
and for each $t \in [m]$ and $v \in V(G_\P)$,
\begin{itemize}
\item[$(c)$] Bad edge condition: \,$\Pr\big( E^b(G_t) \neq \emptyset \big) \, \le \, |S|^{-\eps}$,\\[-2ex]
\item[$(d)$] Good edge condition: \,$\Ex\left( d_\P(v) \right) \, \le \, \delta$.\\[-2ex]
\end{itemize}
Then
$$\Pr\big(X(\P)\big) \; \le \; \alpha^m |S|.$$
\end{lemma}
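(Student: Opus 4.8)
I would follow the argument of Cerf and Cirillo~\cite{CC} (compare the proof of Lemma~35 in~\cite{d=r=3}); here is the plan, with the crux flagged at the end.

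\emph{Reduction to clean runs.} Call a block $G_t$ \emph{bad} if $E^b(G_t) \neq \emptyset$, and \emph{clean} otherwise. Suppose $X(\P)$ holds, fix a path $\pi$ across $G_\P$, and let $\B \subseteq [m]$ be the random set of bad blocks. For each maximal interval $[a,b] \subseteq [m] \setminus \B$ of consecutive clean blocks, $\pi$ contains a sub-walk from level $2a-1$ to level $2b$ that uses only edges of $G_a, \dots, G_b$ --- necessarily \emph{good}, since those blocks are clean --- together with the matching edges of type $(b)$ between them; write $Z_{[a,b]}$ for the event that such a walk exists. Then $X(\P)$ is contained in $\bigcup_\B \big[ \big( \bigwedge_{t \in \B} E^b(G_t) \neq \emptyset \big) \wedge \bigwedge_{[a,b]} Z_{[a,b]} \big]$, the inner conjunction running over the maximal clean runs of $[m] \setminus \B$. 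The events in each such conjunction depend on pairwise disjoint sub-collections of $G_1, \dots, G_m$, so the Independence hypothesis lets me multiply their probabilities, and the Bad edge condition supplies a factor at most $|S|^{-|\B|\eps}$.

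\emph{Crossing a clean run.} The first key estimate is $\Pr\big( Z_{[a,b]} \big) \le |S|\, \delta^{\,b-a+1}$. I would prove this by running the reachability process along the run, one block at a time: in a clean block every component is a clique, so a \emph{good} edge is the only way to pass between levels $2t-1$ and $2t$, and hence the expected number of columns reachable at level $2t$ is at most $\delta$ times the number reachable at level $2t-2$, by the Good edge condition. Unfolding from the start of the run --- where there is a single free choice of starting column, contributing the factor $|S|$ --- gives the bound; the BK condition (with general $r$) is what permits the relevant connection probabilities to be multiplied as if they occurred disjointly even when $\pi$ back-tracks between consecutive blocks, and it will be needed again when one passes to the coarser scale below.

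\emph{Assembling the bound.} The remaining, and genuinely delicate, step is to combine the above into $\Pr\big( X(\P) \big) \le \alpha^m |S|$. Summing over $\B$ naively is too lossy: a single bad block can make every column reachable, so the clean run following it must be crossed starting from a full front at cost $|S|$, and weighed only against $|S|^{-\eps}$ this produces a series comparable to $|S|\big( \delta + |S|^{1-\eps} \big)^m$, which is useless once $\eps < 1$. Following Cerf and Cirillo, I would instead renormalise: partition $[m]$ into super-blocks of a suitable bounded length $L = L(\alpha,\eps)$, declare a super-block bad if it contains a bad edge (probability at most $L|S|^{-\eps} \le |S|^{-\eps'}$ for some $\eps'$ that is only a bounded factor below $\eps$), and verify that a clean super-block, regarded as a single block on the same vertex set $S$, again satisfies all four hypotheses $(a)$--$(d)$, now with the far smaller parameter $\delta^{L}$ in place of $\delta$ in the Good edge condition. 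Iterating this a bounded number of times drives the effective good-edge parameter below any prescribed threshold, after which the reachability estimate of the previous paragraph already yields $\alpha^m|S|$; the bounded loss in the bad-edge exponent incurred over the iteration is exactly what the hypothesis $\alpha^4 |S|^\eps \ge 1$ is designed to absorb, and $\delta$ is chosen small at the very end so that every inequality closes.

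The main obstacle is this last step, and within it the verification that a clean super-block inherits the (general-$r$) BK condition at the coarser scale: this rests on the clique structure of clean blocks together with the BK condition applied simultaneously to all of the constituent blocks, and is the technical heart of~\cite{CC}.
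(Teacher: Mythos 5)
The paper gives no proof of this lemma (it is quoted from~\cite{CC}; see Lemma~35 of~\cite{d=r=3}), so your sketch has to be judged against that argument. Your first two steps are essentially its skeleton: decompose according to the set of bad blocks, and bound the probability of crossing a clean run of length $L$ by $|S|\delta^{L}$ using independence across blocks, the clique structure of clean blocks, condition $(d)$, and condition $(b)$ to decouple the several crossings that a (minimal) path may make of one and the same block --- the precise point being that distinct crossings of a single clean block must have pairwise non-adjacent entry points (otherwise the clique structure yields a shortcut), which is exactly the configuration to which the general-$r$ BK hypothesis applies. That is the only place $(b)$ is needed; it is a single-scale statement.

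The gap is in your assembly step, which both misdiagnoses the difficulty and replaces it by an argument that does not close as described. First, the direct summation over bad sets is not too lossy: bound each clean run by $\min(1,|S|\delta^{L})$ rather than by per-block factors, charge each run to the bad block immediately to its left (the initial run being charged to the lone factor $|S|$ in the conclusion, via $|S|\delta^{L_0}\le |S|\alpha^{2L_0}$ for $\delta\le\alpha^2$), and check that each bad-block/run pair contributes at most $\alpha^{2(L+1)}$: for long runs because $|S|\delta^{L}\le\alpha^{2L+2}$; for short runs because the defining inequality $|S|\delta^{L}>\alpha^{2L+2}$ forces $|S|^{-\eps}\le\alpha^{2L+2}$ once $\delta\le\alpha^{4(1+\eps)/\eps}$; and for empty runs by the hypothesis $|S|^{-\eps}\le\alpha^{4}$. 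This gives at most $|S|\alpha^{2m}$ per configuration, hence $|S|(2\alpha^{2})^{m}\le\alpha^{m}|S|$ after summing over the $2^{m}$ bad sets; your $|S|(\delta+|S|^{1-\eps})^{m}$ heuristic arises from charging the reset cost $|S|$ to every bad block rather than once per run. Second, the renormalisation you propose instead has a concrete flaw: the derived graph of a clean super-block does not satisfy condition $(d)$ with parameter $\delta^{L}$, since two columns on the same side of a super-block are connected with probability of order $\delta$, not $\delta^{L}$, however long the super-block is (and if you keep only the crossing edges to avoid this, admissibility fails); moreover the coarse-scale BK condition, which you flag as the crux and defer to~\cite{CC}, is not something that argument proves or needs --- there is no product structure inside a block from which such an inheritance could be extracted from $(a)$--$(d)$. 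So the proposal as written does not yield the lemma; the single-scale accounting above, with $\delta=\alpha^{\Theta(1/\eps)}$ (which is where both $\delta=\delta(\alpha,\eps)$ and the hypothesis $\alpha^{4}|S|^{\eps}\ge 1$ enter), is what is actually required.
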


\begin{rmk}
We shall apply Lemma~\ref{CClemma} with $S = [N]^{d-1} \times [k]^\ell$, where $N \le \log n$. The pair $uv$ will be an edge of the graph $G_t$ if $u,v$ are in the same component of $[A]$, where the closure is in the structure $C([N]^{d-1} \times [k]^{\ell+1},r-1)$, and $A$ is chosen according to $\Pr_p$. Edges will be labelled `good' if both endpoints lie in some internally filled component of `small' diameter, i.e., less than $B \log N$, for some suitably chosen $B > 0$. Condition $(b)$ will be proved using the van den Berg-Kesten Lemma, and conditions $(c)$ and $(d)$ by the induction hypothesis. The base cases are Corollary~\ref{2r2cor} and Lemma~\ref{goodedge}, below.
\end{rmk}

Given a bootstrap structure $G$ on $[n]^d \times [k]^\ell$, a set $A \subset V(G)$, a vertex $x \in V(G)$ and a number $m > 0$, we define the set
\begin{align}
& \Gamma_G(A,m,x) \: := \: \big\{ y \in V(G) \,:\, \textup{there exists an internally filled connected} \nonumber \\
& \hspace{3.25cm} \textup{component } X \subset V(G) \textup{ such that } x,y \in X \textup{ and }\diam(X) \le m \big\}.\label{Gamdef}
\end{align}
(This definition is important, and is due to Holroyd~\cite{Holddim}.) The following straightforward lemma, which we shall use to bound the expected number of good edges incident with a vertex, was proved in~\cite{d=r=3}.

\begin{lemma}[Lemma~36 of~\cite{d=r=3}]\label{goodedge}
Let $n,d,k,\ell \in \N$, with $d \ge 2$, and let $B > 0$. There exists a constant $c(B,d,k,\ell)$ such that the following holds. Given $p > 0$ sufficiently small, let $G = C([n]^d \times [k]^\ell,2)$ and  $A \sim \Bin(V(G),p)$. Then
$$\Ex_p\Big( \big| \Gamma_G(A, B/p^{1/(d-1)}, v) \big| \Big) \; \le \; c(B,d,k,\ell)\,\big(\log(1/p) \big)^{3d + \ell + 1}\,p$$
for every $v \in V(G)$.
\end{lemma}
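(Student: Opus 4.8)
The plan is to estimate $\Ex_p\big|\Gamma_G(A,m,v)\big|$, where $m:=B/p^{1/(d-1)}$, by writing it as $\sum_y\Pr_p\big(y\in\Gamma_G(A,m,v)\big)$ and reducing, for each $y$, the event $\{y\in\Gamma_G(A,m,v)\}$ to a combination of a crossing event and an occupied-sites event for a suitable rectangle. We may assume $k=k(B)$ is large enough for Lemma~\ref{crossR} to apply, with $\delta=\tfrac12$ say (for smaller $k$ the argument only simplifies, and only large $k$ is needed in Section~\ref{T1PfSec} anyway). First I would prove the following reduction. Suppose $y\in\Gamma_G(A,m,v)$, witnessed by an internally filled connected set $X$ with $v,y\in X$ and $\diam(X)\le m$, and let $R:=R(X)$ be its bounding rectangle; then $v,y\in R$ and $\lg(R)=\diam(X)\le m$. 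Since $X$ is connected and meets both of the faces $\{x_j=a_j\}$, $\{x_j=b_j\}$ of $R$, and $X\subseteq[A\cap X]\subseteq[A\cap R]$, there is a path across $R$ in direction $j$ inside $[A\cap R]$, hence inside $[A']$ with $A'=(A\cap R)\cup\{\x:x_j\le a_j-1\}$; thus $H^{\rightarrow(j)}(R)$ occurs, for every $j\in[d]$. Moreover, unless $y=v\in A$, one has $|A\cap R|\ge 2$, since a single element of $A$ has trivial closure (all thresholds are $\ge 2$) whereas $X$ has $\ge 2$ elements, or $v\in[A\cap X]\setminus A$. Summing over $y$ and interchanging the order of summation (using $\sum_y\mathbb 1[v,y\in R]=|R|$), this yields
$$\Ex_p\big|\Gamma_G(A,m,v)\big|\;\le\; p\;+\;\sum_{R\ni v,\ \lg(R)\le m}|R|\cdot\min\Big(\Pr_p\big(|A\cap R|\ge 2\big),\ \Pr_p\big(H^{\rightarrow(j^*(R))}(R)\big)\Big),$$
where $j^*(R)$ denotes a longest direction of $R$ (and the smallest rectangle through $v$ contributes only an $O(p^2)$ term).

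Next I would estimate the two factors. For the first, $|A\cap R|$ is binomial with parameters $|R|$ and $p$, so $\Pr_p(|A\cap R|\ge 2)\le\binom{|R|}{2}p^2$. For the second, I would invoke Lemma~\ref{crossR} in direction $j^*(R)$ — which is legitimate because every side length of $R$ is at most $m=B/p^{1/(d-1)}$ — to get $\Pr_p\big(H^{\rightarrow(j^*)}(R)\big)\le\big(\beta_{\ell+1}(u_{j^*}(R))\big)^{(1-\delta)\lg(R)}$. The key point I would stress is that this base is bounded away from $1$ uniformly over all admissible $R$: since $\prod_{i\neq j^*}a_i\le m^{d-1}=B^{d-1}/p$ and $q\le 2p$ for small $p$, we have $u_{j^*}(R)=1-e^{-q\prod_{i\neq j^*}a_i}\le 1-e^{-2B^{d-1}}<1$, and because $\beta_{\ell+1}$ is increasing on $[0,1]$ with $\beta_{\ell+1}(u)<1$ for $u<1$ (immediate from its defining equation), $\beta_{\ell+1}(u_{j^*}(R))\le 1-2c_B$ for a constant $c_B=c_B(B,d,\ell)>0$. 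Hence $\Pr_p\big(H^{\rightarrow(j^*)}(R)\big)\le e^{-c_B\lg(R)}$.

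Finally I would carry out the summation, grouping the rectangles by $L:=\lg(R)$. There are $O\big(L^{2d-1}\big)$ rectangles $R\ni v$ with $\lg(R)=L$, and each has $|R|\le k^\ell L^d$, so
$$\Ex_p\big|\Gamma_G(A,m,v)\big|\;\le\; p\;+\;O\!\left(k^\ell\sum_{L\ge 2}L^{3d-1}\,\min\!\Big(k^{2\ell}L^{2d}p^2,\ e^{-c_B L}\Big)\right).$$
Splitting the sum at the point $L^*\approx\tfrac{2}{c_B}\log(1/p)$ where the two bounds cross (so $L^*\le\tfrac{3}{c_B}\log(1/p)$ for $p$ small): the terms with $L<L^*$ are bounded using $k^{2\ell}L^{2d}p^2$ and total $O\big(p^2(\log(1/p))^{5d}\big)=O(p)$, while the terms with $L\ge L^*$ are bounded using $e^{-c_B L}$ and total $O\big(e^{-c_B L^*/2}\big)=O\big(p(\log(1/p))^d\big)$. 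Altogether $\Ex_p\big|\Gamma_G(A,m,v)\big|\le C(B,d,k,\ell)\,p\,(\log(1/p))^d$ for $p$ small, which is well within the claimed bound since $d\le 3d+\ell+1$.

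The step I expect to be the crux is not any single calculation but the division of labour in the reduction: neither estimate alone suffices. The crossing bound $e^{-c_B L}$ has only a constant rate $c_B$ (small when $B$ is large), so summed over all small rectangles it would contribute a constant rather than $O(p)$; it is the complementary $p^2$-bound from $|A\cap R|\ge 2$ that controls the regime $L=O(1)$, while the crossing bound is genuinely needed only for $L\gtrsim\log(1/p)$. Getting this split right — and in particular checking that $\beta_{\ell+1}(u_{j^*}(R))$ really is bounded away from $1$ for every relevant $R$, which hinges on $\lg(R)\le B/p^{1/(d-1)}$ so that $q\prod_{i\neq j^*}a_i=O(B^{d-1})$ — is where the content lies; the remaining summation is routine.
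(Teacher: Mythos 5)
Your main line of argument is sound and, for $k\ge k_0(B,d,\ell)$, essentially complete: the bounding rectangle $R(X)$ of a witness is crossed in each of the first $d$ directions and, unless $X=\{v\}\subseteq A$, contains at least two sites of $A$; Lemma~\ref{crossR} is applicable because every side of $R(X)$ is at most $B/p^{1/(d-1)}$, so $q\prod_{i\ne j^*}a_i=O(B^{d-1})$ and the base $\beta_{\ell+1}\big(u_{j^*}(R)\big)$ is indeed bounded away from $1$ uniformly; and your split of the sum at $L^*\asymp\log(1/p)$ correctly plays the $\binom{|R|}{2}p^2$ bound against the exponential crossing bound, giving $O\big(p(\log(1/p))^{d}\big)$, stronger than what is claimed. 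Note, however, that this paper does not prove the lemma itself: it quotes Lemma~36 of~\cite{d=r=3}, and the argument there (in essence Aizenman--Lebowitz plus the double-gap estimate) is more elementary and does not pass through the crossing machinery of Section~\ref{crosssec}.

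The genuine gap concerns the thickness parameter $k$. The lemma is asserted for \emph{every} $k\in\N$ (the constant may depend on $k$), but Lemma~\ref{crossR} --- your only tool in the long-rectangle regime --- is stated, and proved via slabs of width $k/10$ and the blocker geometry, only for $k$ sufficiently large; your parenthetical claim that for smaller $k$ ``the argument only simplifies'' is unsubstantiated, since there is no monotonicity in $k$ to appeal to, and the double-gap bound cannot replace Lemma~\ref{crossR} here because your rectangles can have cross-sections as large as $B^{d-1}k^\ell/p$, where that bound is vacuous. The standard repair avoids crossings altogether: if the witness $X$ has diameter larger than $(\log(1/p))^2$, apply Lemma~\ref{k2k} to $A\cap R(X)$ to extract a rectangle $R'$, internally spanned by $A$, with long side between $(\log(1/p))^2$ and $2(\log(1/p))^2$; since closures of subsets of a rectangle do not leave it (boundary vertices have at most one neighbour inside and all thresholds are at least $2$), $R'\subseteq R(X)$ lies within distance $B/p^{1/(d-1)}$ of $v$, and Lemma~\ref{seeds} (now the cross-section is only polylogarithmic, so it bites) gives a spanning probability at most $e^{-\alpha(\log(1/p))^2}$, superpolynomially small; a union bound over the polynomially many positions of $R'$ and over $y$ makes this case negligible, while witnesses of diameter at most $(\log(1/p))^2$ are handled exactly by your two-site observation. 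That yields the lemma for every fixed $k$; your version suffices for the way the lemma is used in Section~\ref{T1PfSec}, where $k\ge k_0(B)$ anyway, but as a proof of the statement as written it is incomplete.
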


We shall also use the following easy lemma from~\cite{CC}.

\begin{lemma}\label{smallcompt}
Let $A \subset C([n]^d \times [k]^{\ell},r)$. Then for every $1 \le L \le \diam([A])$, there exists a connected set $X$ which is internally filled, i.e., $X \subset [A \cap X]$, with $$L \; \le \; \diam(X) \; \le \; 2L.$$
\end{lemma}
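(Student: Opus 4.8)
The plan is to build up $[A]$ one vertex at a time, tracking the connected components of the set constructed so far and showing that the largest component diameter can only grow by (roughly) a factor of two per step; this is the standard Aizenman--Lebowitz-type argument (compare~\cite{AL, CC}). Fix an enumeration $w_1,\dots,w_P$ of $[A]$ (where $P=|[A]|$) in which every element of $A$ precedes every element of $[A]\setminus A$, and in which the elements of $[A]\setminus A$ appear in the order in which they become infected (ties broken arbitrarily). For $0\le t\le P$ set $B_t:=\{w_1,\dots,w_t\}$, so $B_0=\emptyset$, $B_{|A|}=A$ and $B_P=[A]$. For $t\ge 1$, let $D_1,\dots,D_s$ be the components of $B_{t-1}$ containing a neighbour of $w_t$ (so $s\ge 0$); then $B_t$ is obtained from $B_{t-1}$ by replacing $D_1\cup\dots\cup D_s$ with the single new component $D^\ast(t):=\{w_t\}\cup D_1\cup\dots\cup D_s$, all other components unchanged.

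\emph{Claim 1: every component of $B_t$ is internally filled, i.e.\ satisfies $D\subseteq[A\cap D]$.} Proceed by induction on $t$; it suffices to treat $D^\ast(t)$. If $w_t\in A$ (in particular whenever $t\le|A|$), then $w_t\in A\cap D^\ast(t)$, and by the induction hypothesis together with monotonicity of the closure operator $D_i\subseteq[A\cap D_i]\subseteq[A\cap D^\ast(t)]$ for each $i$, so $D^\ast(t)\subseteq[A\cap D^\ast(t)]$. If $w_t\in[A]\setminus A$, let $\tau$ be the time it is infected; then $|N(w_t)\cap A_{\tau-1}|\ge r(w_t)$, and every vertex of $A_{\tau-1}$ precedes $w_t$ in our enumeration, so $A_{\tau-1}\subseteq B_{t-1}$. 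Hence the $\ge r(w_t)$ neighbours of $w_t$ witnessing its infection all lie in $D_1\cup\dots\cup D_s\subseteq[A\cap D^\ast(t)]$, so $w_t\in[A\cap D^\ast(t)]$ as well, and again $D^\ast(t)\subseteq[A\cap D^\ast(t)]$.

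\emph{Claim 2: writing $\mathcal D(t):=\max\{\diam(D):D\text{ a component of }B_t\}$, one has $\mathcal D(t)\le 2\mathcal D(t-1)+1$ for every $t\ge1$.} Since only $D^\ast(t)$ is new, it suffices to bound $\diam(D^\ast(t))$. For $x\in D_i$ and $y\in D_j$, a path through $w_t$ shows, using that adjacent vertices of $C([n]^d\times[k]^\ell,r)$ are at $\ell_\infty$-distance $1$, that $\|x-y\|_\infty\le(\diam(D_i)-1)+2+(\diam(D_j)-1)\le 2\mathcal D(t-1)$; pairs lying in a single $D_i$, or pairs involving $w_t$, are no farther apart. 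Hence $\diam(D^\ast(t))\le 2\mathcal D(t-1)+1$. Now $\mathcal D(0)=0<L$, $\mathcal D(P)=\diam([A])\ge L$, and $\mathcal D(t)$ is a positive integer whenever $B_t\ne\emptyset$; so there is a least $t^\ast$ with $\mathcal D(t^\ast)\ge L$, and then $\mathcal D(t^\ast-1)\le L-1$, whence $\mathcal D(t^\ast)\le 2(L-1)+1\le 2L$. Since every component of $B_{t^\ast-1}$ has diameter $<L$, the component $X$ of $B_{t^\ast}$ achieving $\mathcal D(t^\ast)$ must be $D^\ast(t^\ast)$, which by Claim 1 is internally filled and connected and satisfies $L\le\diam(X)\le 2L$, as required.

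I do not foresee a genuine obstacle, as the argument is purely combinatorial. The only points needing a little care are the bookkeeping with the ``$+1$'' in the definition of $\diam$ (and checking that the factor-of-two step produces $2(L-1)+1\le 2L$ rather than something larger), and the choice to grow the construction from the empty set, so that components are built up from singletons rather than starting from $A$; this is precisely what prevents a large connected component already present in $A$ from making $\mathcal D$ leap past $2L$ in a single step.
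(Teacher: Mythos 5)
Your proof is correct and is essentially the paper's own argument: the paper's two-line proof ("add newly infected sites one by one, and note the largest component diameter can at most jump from $L-1$ to $2L-1$") is exactly the scheme you carry out, and your Claims 1 and 2 supply the details it leaves implicit. Your only addition is the careful bookkeeping — starting from the empty set so that the elements of $A$ are also added one at a time, and tracking the $+1$ in the definition of $\diam$ — which is a sensible way to make the paper's sketch airtight.
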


\begin{proof}
Add newly infected sites one by one, and note that in each step the largest diameter of a component in $[A]$ may jump from at most $L - 1$ to at most $2L - 1$. Thus, at some point in the process the required set $X$ must appear as a component.
\end{proof}

\section{Proof of Theorem~\ref{sharp}}\label{T1PfSec}

We can now prove the following generalization of the lower bound in Theorem~\ref{sharp} by induction on $r$, using the method of Cerf and Cirillo for the induction step, and with Corollary~\ref{2r2cor} and Lemma~\ref{goodedge} as the base case. 

Recall from Section~\ref{CCsec} the definition~\eqref{Gamdef} of $\Gamma_G(A,m,x)$, the set of vertices which are connected to $x$ by a `small' component which is internally filled by $A$. We shall show that, for appropriate values of $p$ and $m$, the expected size of this set goes to zero as $n \to \infty$.

\begin{thm}\label{genthm}
Let $d,\ell,r \in \N$ with $d \ge r \ge 2$. If $\eps > 0$ is sufficiently small, then there exist $B > 0$ and $k_0 = k_0(B) > 0$ such that, if $k \ge k_0$ and $n \in \N$ is sufficiently large, then the following holds. Let $G = C([n]^d \times [k]^\ell,r)$, and let 
$$p \,=\, p(n) \,\le\, \left( \ds\frac{\lambda(d+\ell,\ell+r) - \eps}{\log_{(r-1)} n} \right)^{d-r+1}.$$ 
Then
$$\Pr_p\Big( \diam([A]) \ge B\log n \Big) \, \le \, n^{-(r-2)d - \eps},$$
and moreover 
$$\Ex_p\Big( \big| \Gamma_G(A, B\log n, v) \big| \Big) \, = \, o(1)$$
as $n \to \infty$, for every $v \in V(G)$.
\end{thm}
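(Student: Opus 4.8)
The plan is to prove Theorem~\ref{genthm} by induction on $r$, using the Cerf--Cirillo machinery of Section~\ref{CCsec} for the inductive step. The base case $r=2$ is immediate: the first assertion is Corollary~\ref{2r2cor}, and the second is Lemma~\ref{goodedge}, since for $p \le \big((\lambda(d+\ell,\ell+2) - \eps)/\log n\big)^{d-1}$ one has $B\log n \le B'/p^{1/(d-1)}$ for a suitable $B' = B'(B)$, so by monotonicity of $\Gamma_G$ in its diameter parameter $\Ex_p\big(|\Gamma_G(A,B\log n,v)|\big) \le c(B',d,k,\ell)\,(\log(1/p))^{3d+\ell+1}\,p = o(1)$, using $d\ge 2$.

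For the inductive step, assume $r \ge 3$ and that the theorem holds for $r-1$ in every dimension and thickness. Fix $\eps>0$ small and a much smaller $\eps'=\eps'(d,\ell,r,\eps)>0$, and apply the induction hypothesis in dimension $d-1$, thickness exponent $\ell+1$, parameter $r-1$, with $\eps'$ in place of $\eps$, to obtain constants $B_1,k_1$; write $N:=2\log n$ and $\lambda:=\lambda(d+\ell,\ell+r)$. The arithmetic fact behind the whole statement is that, since $r\ge 3$, $\log_{(r-2)}(N)=\log_{(r-1)}(n)\,(1+o(1))$, so the hypothesis $p\le\big((\lambda-\eps)/\log_{(r-1)}(n)\big)^{d-r+1}$ forces $p\le\big((\lambda-\eps')/\log_{(r-2)}(M)\big)^{(d-1)-(r-1)+1}$ for every $M\le N$ once $n$ is large; i.e.\ $p$ is subcritical, in the sense of the induction hypothesis, for $C([M]^{d-1}\times[k]^{\ell+1},r-1)$ whenever $M\le N$. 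Now if $\diam([A])\ge B\log n$ then Lemma~\ref{smallcompt} (with $L=\log n$) supplies an internally filled connected $X$ with $\log n\le\diam(X)\le 2\log n$; let $R=R(X)$, reorder coordinates so $\lg(R)=m_0\in[\log n,2\log n]$ is the side in direction $1$, and note every other side of $R$ has length at most $m_0\le N$. Since $X\subset[A\cap X]\subset[A\cap R]$ and $X$ is connected and meets both direction-$1$ faces of $R$, the set $[A\cap R]$ crosses $R$ in direction $1$; as there are at most $n^d(2\log n)^d$ candidates $R$, it suffices to show that a fixed such $R$ is crossed with probability at most $n^{-(r-1)d-2\eps}$.

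To bound this I would partition direction $1$ of $R$ into $m:=\lfloor m_0/k\rfloor$ slabs $L_1,\dots,L_m$ of width $k$ and run the bootstrap process in each $L_t$ under the coupling in which $R\setminus L_t$ is already infected: these processes are independent (they depend on the disjoint sets $A\cap L_t$) and their union dominates $[A\cap R]$, as in the proof of Lemma~\ref{crossR}. Under this coupling a vertex of $L_t$ loses one from its threshold exactly on its two direction-$1$ faces, so direction $1$ acts as a new length-$k$ thick coordinate and the structure on $L_t$ becomes isomorphic to $C([M_2]\times\cdots\times[M_d]\times[k]^{\ell+1},r-1)$ with all $M_i\le N$ (a substructure of $C([N]^{d-1}\times[k]^{\ell+1},r-1)$, to which the induction hypothesis applies by monotonicity). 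Encoding the connectivity of the coupled process as in Section~\ref{CCsec} yields a random sequence $\P=(G_1,\dots,G_m)$ of admissible two-coloured graphs on $S\times[2]$, $S$ the cross-section of the slabs: a pair gets a \emph{good} edge of $G_t$ when both vertices lie in a common internally filled connected set of diameter at most $B_1\log N$ inside $L_t$, a \emph{bad} edge when $[A\cap L_t]$ has a component of diameter exceeding $B_1\log N$, and an unlabelled edge otherwise; a direction-$1$ crossing of $R$ by $[A\cap R]$ then entails the event $X(\P)$. One now checks the hypotheses of Lemma~\ref{CClemma}. Independence $(a)$ is clear; the BK condition $(b)$ follows from the van den Berg--Kesten Lemma once one verifies that on the event $\bigwedge_j(x_j\sim y_j)\wedge\bigwedge_{j\ne j'}(x_j\not\sim x_{j'})\wedge(E^b(G_t)=\emptyset)$ the internally filled witness sets for the increasing events $x_j\sim y_j$ can be taken pairwise disjoint (an overlap would create an internally filled set of diameter at most $B_1\log N$ through $x_j,x_{j'}$, contradicting $x_j\not\sim x_{j'}$ or $E^b(G_t)=\emptyset$); the bad-edge bound $(c)$ is the first assertion of the induction hypothesis, $\Pr_p(E^b(G_t)\ne\emptyset)\le\Pr_p(\diam([A\cap L_t])\ge B_1\log N)\le N^{-(r-3)(d-1)-\eps'}\le|S|^{-\eps''}$ for a suitable $\eps''>0$; and the good-edge bound $(d)$ is the second assertion, $\Ex_p(d_\P(v))\le\Ex_p\big(|\Gamma_{L_t}(A,B_1\log N,v)|\big)=o(1)\le\delta$ for $n$ large. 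Choosing $\alpha<1/2$ small enough (after $k$, depending on $d,r,\eps,k$) that $\alpha^{m_0/k}|S|\le n^{-(r-1)d-2\eps}$ for large $n$ --- possible since $m_0\ge\log n$ and $|S|=(\log n)^{O(1)}$, and noting that if $S$ is too small for the hypothesis $\alpha^4|S|^{\eps''}\ge 1$ a direct no-double-gap count across the $\Theta(\log n)$ slabs already gives probability $n^{-\omega(1)}$ --- Lemma~\ref{CClemma} gives $\Pr_p(X(\P))\le\alpha^m|S|\le n^{-(r-1)d-2\eps}$, and the union bound over $R$ yields $\Pr_p(\diam([A])\ge B\log n)\le n^{-(r-2)d-\eps}$. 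For the second assertion I would expand $\Ex_p\big(|\Gamma_G(A,B\log n,v)|\big)=\sum_y\Pr_p\big(y\in\Gamma_G(A,B\log n,v)\big)$ over the $(\log n)^{O(1)}$ vertices $y$ with $\|y-v\|_\infty<B\log n$ and split by the diameter $m$ of the common internally filled set: for $m$ above a constant one re-runs the slicing and Cerf--Cirillo argument on the bounding cuboid of that set (now $\Theta(m/k)$ slabs), obtaining geometric decay in $m$ at a rate as fast as one likes by shrinking $\alpha$; for bounded $m$ a crude no-double-gap count gives probability $O(p)$. Summing the geometric series and multiplying by $(\log n)^{O(1)}$ gives $\Ex_p\big(|\Gamma_G(A,B\log n,v)|\big)=O\big(p\,(\log n)^{O(1)}\big)=o(1)$, since $p=O\big((\log_{(r-1)}n)^{-(d-r+1)}\big)\to 0$.

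The step I expect to be the main obstacle is making the slab-coupling precise --- identifying each $L_t$ with (a substructure of) $C([M]^{d-1}\times[k]^{\ell+1},r-1)$, checking the induced two-coloured graphs are admissible, and, above all, tracking through the iterated logarithms that the given density $p$ stays subcritical for that structure --- together with the disjoint-witness verification of the BK condition $(b)$; arranging the constants in a consistent order ($\eps'\ll\eps$; then $B_1,k_1$; then $k$; then $\alpha$; then $\delta$; then $n$ large) is a further source of care.
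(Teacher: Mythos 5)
Your strategy is the paper's own: induction on $r$ with base case Corollary~\ref{2r2cor} plus Lemma~\ref{goodedge}, reduction via Lemma~\ref{smallcompt} to a spanned/crossed rectangle of diameter $\Theta(\log n)$, the slab coupling that turns each width-$k$ slab into $C([m]^{d-1}\times[k]^{\ell+1},r-1)$, the encoding as admissible two-coloured graphs, and Lemma~\ref{CClemma} with conditions $(a)$--$(d)$ checked exactly as in Section~\ref{T1PfSec}, including the iterated-logarithm bookkeeping $\log_{(r-2)}(\Theta(\log n))=(1+o(1))\log_{(r-1)}n$. One structural difference: you slice the actual bounding rectangle of $X$, whose cross-section $S$ may have bounded size, and patch the failure of $\alpha^4|S|^{\eps''}\ge 1$ with a no-double-gap count; the paper instead pads $R$ to $R'\cong[m]^d\times[k]^\ell$ before slicing (harmless, since $[A\cap R]\subset[A\cap R']$ and the crossing survives), so that $|S|=m^{d-1}k^\ell\ge \log_{(r)}n$ always and only the trivial regime $m\le\log_{(r)}n$ needs separate treatment. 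Your patch can be made to work, but be aware that (i) the intermediate regime needs $|S|\,p\to 0$ on the ``too small'' side, which couples the cutoff to both $\alpha$ and $p$, and (ii) $\alpha$ cannot be ``shrunk as fast as one likes'': it is bounded below by $\alpha^4|S|^{\eps''}\ge1$ and by the requirement that $\delta(\alpha,\eps'')$ dominate the unspecified $o(1)$ rate supplied by the induction hypothesis, so all you may assume is that $\alpha(n)\to0$ slowly.

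The one genuine error is the last step of your bound on $\Ex_p\big(|\Gamma_G(A,B\log n,v)|\big)$: the chain ``$O\big(p\,(\log n)^{O(1)}\big)=o(1)$ since $p\to0$'' is false for $r\ge3$, because there $p$ is only of order $(\log_{(r-1)}n)^{-(d-r+1)}$, so $p\,(\log n)^{C}\to\infty$. The $(\log n)^{O(1)}$ vertex count must never be charged against the $O(p)$ bound; it should be charged, per diameter scale $m$, against the geometrically decaying Cerf--Cirillo bound. This is exactly what the paper does: for each $m\le\log n$ there are at most $m^{2d}$ candidate rectangles through $v$, each containing at most $m^dk^\ell$ vertices, and each is spanned with probability at most $\alpha^{m/k}(m+k)^{d+\ell}$ (the small-$m$ case being absorbed via $P_p(R)\le k^{d+\ell}p$), so the expectation is at most $\sum_{m=1}^{\log n} m^{3d}k^\ell\,\alpha^{m/k}(m+k)^{d+\ell}=O_k\big(\alpha^{1/k}\big)=o(1)$; your $O(p)$ estimate is needed only for the $O_k(1)$ vertices at bounded distance from $v$. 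With that repair (and the small-$|S|$ bookkeeping above) your argument coincides with the paper's proof; note also that a fixed slowly vanishing $\alpha$ already gives the needed per-rectangle bound $\alpha^{\lfloor m_0/k\rfloor}|S|\le n^{-\log(1/\alpha)/2k}$, so no ``rate as fast as one likes'' is required anywhere.
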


\begin{proof}
The proof is by induction on $r$; we begin by proving the base case, $r = 2$. Let $B = B^{(2)}(d,\ell,\eps)$ and $k_0(B)$ be given by Corollary~\ref{2r2cor}. The first statement follows from Corollary~\ref{2r2cor}, and the second follows by Lemma~\ref{goodedge}, so in this case we are done.

Let $r \ge 3$, and assume that the theorem holds for $r - 1$, for all $d,\ell \in \N$ and every sufficiently small $\eps > 0$. We shall prove the theorem with $B = B^{(r)}(d,\ell,\eps) = 1$ when $r \ge 3$. Fix $d,\ell \in \N$ and $\eps > 0$, let $p = p(n) > 0$ be as described above, and let $k \ge k_0(d,\ell,r,\eps) \in \N$ be sufficiently large. 

Let $G = C([n]^d \times [k]^\ell,r)$, and recall that $P_p(R)$ denotes the probability that a rectangle $R \subset V(G)$ is internally spanned by $A \sim \Bin\big( V(G),p\big)$. The induction step is a straightforward consequence of the following claim.

\medskip
\noindent \textbf{Claim}: If $R \subset C([n]^d \times [k]^\ell,r)$ is a rectangle and $\diam(R) = m \le \log n$, then 
$$P_p(R) \le \alpha^{m/k} (m + k)^{d+\ell}$$
for some $\alpha = \alpha(n) \to 0$ as $n \to \infty$.

\begin{proof}[Proof of claim]
If $m \le \log_{(r)}n$ then $P_p(R) \le k^{d+\ell} p \to 0$ as $n \to \infty$, since $R$ must contain an element of $A$. So assume that $m > \log_{(r)}n \gg k$, let $R' \supset R$ be a rectangle in $G$, with $R' \cong [m]^d \times [k]^\ell$, and let $t = \lfloor m/k \rfloor$. Assume without loss of generality that $\dim(R)_1 = m$ (i.e., $R$ has length $m$ in direction 1), and assume for simplicity that $m$ is divisible by $k$. We partition the rectangle $R'$ into blocks $L_1, \ldots, L_t$, each of size $[m]^{d-1} \times [k]^{\ell+1}$. To be precise, let $L_j = \big\{\x \in R' : (j-1)k + 1 \le x_1 \le jk \big\}$. 

Since $R$ is internally spanned by $A$, there exists a path in $[A \cap R]$ from the set $\{\x \in R : x_1 = 1\}$ to the set $\{\x \in R : x_1 = m\}$. We shall use Lemma~\ref{CClemma} to show that this is rather unlikely. In order to do so we use the following coupling. 

Replace the thresholds in each block $L_j$ with those of $C([m]^{d-1} \times [k]^{\ell+1},r-1)$, and run the bootstrap process independently in each block. Denote by $\{A\}(j)$ the closure of $A \cap L_j$ under this process, i.e., the closure in the bootstrap structure $C([m]^{d-1} \times [k]^{\ell+1},r-1)$.

Let $\{A\} = \bigcup_j \{A\}(j)$. The following subclaim shows that this is indeed a coupling. 

\medskip
\noindent \textbf{Subclaim}: $\{A\} \supset [A \cap R'] \supset [A \cap R]$.

\begin{proof}[Proof of subclaim]
Note that each vertex of $L_j$ has at most one neighbour in $R' \setminus L_j$, and `internal' vertices of $L_j$ (those with $x \not\in \{(j-1)k+1,jk\}$) have no neighbours outside $L_j$. A vertex $\x \in L_j$ originally had threshold $r$, and now (in the coupled system) has threshold $r - 1 + I[x_1 \not\in \{(j-1)k+1,jk\}]$. Thus, the threshold of no vertex has increased, and the threshold of those vertices which have a neighbour in $R'$ outside $L_j$ have decreased by one. Thus $\{A\} \supset [A \cap R']$, as claimed. The second inclusion follows since $R \subset R'$.
\end{proof}

Now, let $S = [m]^{d-1} \times [k]^\ell$, and for each $j \in [t]$, define a two-coloured graph $G_j$ on $S \times [2]$ as follows. For each $\x \in S \times [2]$, let $\tilde{\x}$ denote the element of $\{(j-1)k+1,jk\} \times [m]^{d-1} \times [k]^\ell$ corresponding to $\x$ in the natural isomorphism. Now define the edges of $G_j$ by
$$\x\y \in E(G_j) \textup{ if and only if $\tilde{\x}$ and $\tilde{\y}$ are in the same component of }\{A\}(j),$$
and let
\begin{align*}
& \x \sim \y \; \Leftrightarrow \textup{ there exists an internally filled connected component } \\
& \hspace{3cm} X \subset \{A\}(j) \textup{ such that } \x,\y \in X \textup{ and }\diam(X) \le B\log n,
\end{align*}
where $\x \sim \y$ means $\x\y$ is a `good' edge, as in Section~\ref{CCsec}, and $B = B^{(r-1)}(d-1,\ell+1,\eps)$ was chosen above. Note that $G_j$ is admissible, since $\x \sim \y$ and $\y \sim \z$ in $G_j$ implies that $\x$ and $\z$ are in the same component of $\{A\}(j)$, and so either $\x \sim \z$, or $\x\z$ is a bad edge. Note also that the event $\x \sim \y$ is increasing. 

For each set $A \subset V(G)$, we have defined a sequence $\P := (G_1,\ldots,G_t) \in \Omega(S,m)$ of admissible two-coloured graphs. We claim that the (random) sequence $\P$ satisfies the conditions of Lemma~\ref{CClemma}. Indeed, recall that $m \le \log n$, so
$$p \; \le \; \left( \ds\frac{\lambda(d+\ell,\ell+r) - \eps}{\log_{(r-2)} m} \right),$$
and let $\eps' = \eps / (d+\ell)$. By the induction hypothesis (and our choice of $B$ and $k$), for each $j \in [t]$ we have
\begin{equation}\label{condc} 
\Pr\big( E^b(G_j) \neq \emptyset \big) \; \le \; \Pr_p\Big( \textup{diam}\big( \{A\}(j) \big) > B \log m \Big) \; \le \; m^{-\eps} \; \le \; |S|^{-\eps'},
\end{equation}
since $|S| = m^{d-1} k^\ell \le m^{d+\ell}$. 

Next, choose a function $\alpha = \alpha(n)$ such that $\alpha \to 0$ sufficiently slowly as $n \to \infty$, and let $\delta = \delta(\alpha,\eps') > 0$ be given by Lemma~\ref{CClemma}. Since $\alpha(n) \to 0$ sufficiently slowly, and $d$, $\ell$ and $\eps$ are constants, we can assume that $\delta = \delta(n)$ approaches zero arbitrarily slowly as $n \to \infty$. Thus, by the induction hypothesis, we have
\begin{equation}\label{condd} 
\Ex_p\big( d_\P(v) \big) \; = \; \Ex_p\Big( \big| \Gamma_G(A \cap L_j, B^{(r-1)}(d-1,\ell+1,\eps) \log m, v) \big| \Big) \; \le \; \delta
\end{equation}
for any $v \in V(G_j)$, if $n$ (and therefore $m \ge \log_{(r)} n$) is sufficiently large. Moreover, we have $|S| \ge m \ge \log_{(r)} n$, so $\alpha^4 |S|^{\eps'} \to \infty$ as $n \to \infty$ if $\alpha(n) \to 0$ sufficiently slowly. 

By~\eqref{condc} and~\eqref{condd}, it follows that conditions $(c)$ and $(d)$ of Lemma~\ref{CClemma} are satisfied (for $\eps'$ and $\delta = \delta(\alpha,\eps')$ as above). Condition $(a)$ is satisfied by construction. Condition $(b)$ follows because if $\x \sim \y$ and $\x' \sim \y'$, and there are no bad edges, then either all four points are in the same internally spanned component with diameter at most $B \log n$, or they are in different components of $\{A\}(j)$. Thus, if $\x \not\sim \x'$, then the events $\x \sim \y$ and $\x' \sim \y'$ must occur disjointly, and so we can apply the van den Berg-Kesten Lemma.

Recall that $X(\P)$ denotes the event that there is a connected path across $G_\P$, and note that if $R$ is internally spanned by $A$, then, by the subclaim, the event $X(\P)$ holds. Thus, by Lemma~\ref{CClemma}, we have
$$P_p(R) \; \le \; \Pr\big( X(\P) \big) \; \le \; \alpha^{\lfloor m/k \rfloor} (m+k)^{d+\ell} $$
as required, and the claim follows.
\end{proof}

We shall now use the claim to prove the theorem for $r$. Indeed, suppose that $\diam([A]) \ge \log n$. By Lemma~\ref{smallcompt}, there exists an internally filled, connected set $X$ with
$$\frac{\log n - 1}{2} \; \le \; \diam(X) \; \le \; \log n - 1.$$
Let $R$ be the smallest rectangle containing $X$, and observe that $R$ is internally spanned by $A$, and that $\diam(R) = \diam(X) \le \log n$. Since there are at most $(n \log n)^d$ such rectangles, by the claim we have
$$\Pr_p\Big( \diam([A]) \ge \log n \Big) \; \le \; (n \log n)^d \cdot \alpha^{\log n / 3k} (\log n + k)^{d+\ell} \; \le \; n^{-dr},$$
if $n$ is sufficiently large, since $\alpha(n) \to 0$ as $n \to \infty$.

Finally, let $v \in V(G)$, and suppose that $w \in \Gamma_G(A, \log n, v)$. Then there exists an internally filled connected component $X \subset V(G)$ such that $v,w \in X$ and $\diam(X) \le \log n$, and hence there exists an internally spanned rectangle $R$ (the smallest rectangle containing $X$) such that $v,w \in R$ and $m := \diam(R) \le \log n$. 

There are at most $m^{2d}$ rectangles with diameter $m$ containing $v$, and each contains at most $m^d k^\ell$ vertices. It follows that
$$\Ex_p\Big( \big| \Gamma_G(A, \log n, v) \big| \Big) \; \le \; \sum_{m=1}^{\log n} m^{3d} k^\ell \cdot \alpha^{m / k} (m + k)^{d+\ell} \; = \; o(1),$$
since $\alpha(n) \to 0$ as $n \to \infty$. This completes the induction step, and hence the proof of Theorem~\ref{genthm}.
\end{proof}

This completes the proof of Theorem~\ref{sharp}, since the upper bound was proved in~\cite{d=r=3}, and the lower bound follows immediately from Theorem~\ref{genthm} in the case $\ell = 0$.

\section{Open problems}\label{Qsec}

In this section we shall present three different directions for future research into the bootstrap process on the grid $[n]^d$: extensions to higher dimensions ($d = d(n) \to \infty$), more general update rules, and further sharpening of the thresholds. See~\cite{BB,Maj,n^d,DCH,GHM} for some recent work on these questions.

\subsection{Higher dimensions}

We consider Theorem~\ref{sharp} to be an important step towards a much bigger goal: to determine $p_c([n]^d,r)$ for \emph{arbitrary} functions $n = n(t)$, $d = d(t)$ and $r = r(t)$ with $n+d \to \infty$. Despite much recent progress, $r$-neighbour bootstrap percolation on $[n]^d$ is still poorly understood for most such functions. 

Our understanding of the bootstrap process is most complete in the case $r = 2$, where we have sharp bounds in the case $d = O(1)$ (by Theorem~\ref{sharp}), and in the case $d \gg \log n$, where it was proved in~\cite{n^d} that
$$p_c\big( [n]^d,2 \big) \: = \: \Big( 4\lambda + o(1) \Big) \left( \frac{n}{n-1} \right)^2 \, \ds\frac{1}{d^2} \, 2^{-2\sqrt{d \log_2 n}},$$
as $d \to \infty$, where $\lambda \approx 1.166$ is the smallest positive root of the equation $\ds\sum_{k=0}^\infty \frac{(-1)^k \lambda^k}{2^{k^2-k} k!} = 0$. 

\begin{prob}
Determine $p_c([n]^d,2)$ for all functions $d(n)$ with $1 \ll d(n) = O(\log n)$. 
\end{prob}

We expect that our proof of Theorem~\ref{sharp} can be extended to slowly growing functions $d = d(n)$, and that $d = \Theta(\log n)$ will be the most challenging range. The growth of the the critical droplet is very different in the ranges $d = O(1)$ (where it grows in all directions at the same time), and $d \gg \log n$ (where it grows in only one direction at a time), and it  will be particularly interesting to see whether these are the only two possible (dominant) behaviours.

Due to some recent progress, we also know a significant amount about the process when $d = r$. Indeed, by Theorem~\ref{sharp} and the results of~\cite{Maj}, we have sharp bounds on $p_c([n]^d,d)$ when $d = O(1)$ and when $d \ge (\log \log n)^{2 + \eps}$. Looking from slightly further away, we have the following theorem, which is implied by the results of~\cite{Sch} and~\cite{Maj}.

\begin{thm}
Let $n = n(d)$. Then, as $d \to \infty$, we have
$$p_c([n]^d,d) \; = \; \left\{
\begin{array} {c @{\quad \textup{if} \quad}l}
o(1) & n \ge 2^{2^{\Ddots^2}}, \;\textup{ (a tower of 2s of height $d$)}\\[+1ex]
\ds\frac{1}{2} + o(1) & n \le 2^{2^{\sqrt{d / \log d}}}.
\end{array}\right.$$
\end{thm}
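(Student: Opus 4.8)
The plan is to obtain the two cases from results already in the literature, so the real task is to quote the right quantitative statements of Schonmann~\cite{Sch} and of~\cite{Maj} and to check that they cover the indicated ranges of $n$. The two regimes are governed by completely different mechanisms. When $n$ is enormous the box $[n]^d$ behaves like the infinite lattice $\ZZ^d$, on which the $d$-neighbour process percolates from \emph{any} positive density; when $n$ is small the box is, from the point of view of an interior vertex, essentially a $2d$-regular graph with threshold exactly $d$ --- a majority rule --- whose critical density on such a finite graph is $1/2 + o(1)$.

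\textbf{The regime where $n$ is at least a tower of $2$'s of height $d$.} Schonmann~\cite{Sch} proved $p_c(\ZZ^d,d) = 0$, i.e.\ for every $p>0$ a density-$p$ random set fills $\ZZ^d$ almost surely. From a finite-volume version of that argument (cf.\ also Cerf--Manzo~\cite{CM}) one obtains a recursion for the least $N$ with $p_c([N]^d,d) \le p$: a self-sustaining infected region in $d$ dimensions grows by filling consecutive hyperplane slabs along one coordinate, and once one slab is full the next fills under a process which --- because each of its vertices already has one infected neighbour in the full slab --- is exactly $(d-1)$-neighbour bootstrap on $[N]^{d-1}$; controlling the $\le N$ slabs then yields a bound essentially of the shape $N_d(p) \le \exp\!\big(N_{d-1}(p')\big)$ down to a bounded base case, so $N_d(p)$ is a tower of exponentials of height $\sim d$ whose top is $O\big(\lambda(d,d)/p\big) = O(1/(dp))$. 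For fixed $p$ this top tends to $0$ as $d \to \infty$, whereas every entry of the height-$d$ tower of $2$'s is at least $2$; hence $N_d(p)$ is dominated by a tower of $2$'s of height $d$ once $d$ is large in terms of $1/p$, and therefore $p_c([n]^d,d) \le p$ for all large $d$ whenever $n$ is at least such a tower. Letting $p \to 0$ gives $p_c([n]^d,d) = o(1)$.

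\textbf{The regime $n \le 2^{2^{\sqrt{d/\log d}}}$.} Here I would invoke~\cite{Maj} in both directions. For the \emph{upper bound} $p_c \le 1/2 + o(1)$: with $p = 1/2 + \eps$, an interior vertex $v \notin A$ fails to be infected in one step only if $\Bin(2d,p) < d$, which by a Chernoff bound has probability at most $e^{-c\eps^2 d}$; thus after one step the still-healthy interior vertices form a sparse set, and the content of~\cite{Maj} is that these residual holes, together with the lower-degree boundary and corner vertices, are filled in boundedly many further rounds. The restriction $n \le 2^{2^{\sqrt{d/\log d}}}$ is precisely what makes the union bound over the $n^d \le 2^{d\,2^{\sqrt{d/\log d}}}$ vertices beat the error factor $e^{-c\eps^2 d}$. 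For the \emph{lower bound} $p_c \ge 1/2 - o(1)$: with $p = 1/2 - \eps$ the complement $[n]^d \setminus A$ has density $1/2+\eps$; using Lemma~\ref{smallcompt} (or the Aizenman--Lebowitz lemma, see~\cite{AL}) one reduces the event that $A$ percolates to the existence of an internally spanned box of side $\Theta(\log n)$, and a Chernoff estimate shows any fixed such box is internally spanned with probability $o(n^{-d})$ in the stated range of $n$, whence $[A] \ne [n]^d$ with high probability. Both estimates are carried out in~\cite{Maj}.

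\textbf{The main obstacle} is the interplay between $n$ and $d$, which must be handled with the explicit dependencies in hand: in the first regime one must check that the height-$d$ tower of $2$'s really does swallow the height-$\sim d$ tower produced by Schonmann's construction once $d \gg 1/p$, and that the failure probabilities summed over the $O(n)$ slabs stay $o(1)$ uniformly in $d$; in the second, one must verify that $2^{2^{\sqrt{d/\log d}}}$ lies inside the range in which the union bounds of~\cite{Maj} close. The underlying ideas are all present in~\cite{Sch} and~\cite{Maj}; only these quantitative checks remain.
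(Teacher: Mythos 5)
The paper does not actually prove this statement: it appears in Section~\ref{Qsec} purely as a consequence of the results of Schonmann~\cite{Sch} and of~\cite{Maj}, which is exactly the decomposition you use (Schonmann-type layer-by-layer droplet growth for the tower regime, the majority-rule analysis of~\cite{Maj} for the regime $n \le 2^{2^{\sqrt{d/\log d}}}$), so at the level of approach your proposal and the paper coincide.

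Two of your quantitative glosses deserve a warning, since they are the only places where you claim more than the citations. First, in the tower regime you take the top of the iterated-exponential bound to be $O(\lambda(d,d)/p) = O(1/(dp))$; but $\lambda(d,d) = (\pi^2/6+o(1))/d$ comes from the sharp-threshold analysis (\cite{d=r=3} and the present paper), proved for fixed $d$ as $n \to \infty$, not from Schonmann's construction, and the uniformity in $d$ that you need is not free. This is not cosmetic: since $\ln^{(d-1)}$ of the height-$d$ tower of $2$'s converges to an absolute constant less than $1$, a bound of the form $\exp^{(d-1)}(C/p)$ with $C$ bounded away from $0$ is not swallowed by the tower of $2$'s for small fixed $p$, and a bound with $d$ exponentials above the $1/p$ never is; so both the decay of the top constant and the exact height (at most $d-1$ exponentials) must be extracted with explicit $d$-dependence, which is precisely the check you defer, and your ``height $\sim d$'' bookkeeping glosses over an off-by-one that would kill the comparison. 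Second, in the other regime your explanation that the restriction on $n$ is what lets a union bound over the $n^d$ vertices beat the one-step error $e^{-c\eps^2 d}$ is numerically false: there $d\log n$ is of order $d\,2^{\sqrt{d/\log d}}$, which dwarfs $\eps^2 d$. The restriction on $n$ is really what rescues the lower bound $p_c \ge 1/2 - o(1)$ (for fixed $d$ one has $p_c \to 0$ as $n \to \infty$, so some cap on $n$ is essential), while the upper bound in~\cite{Maj} is a multi-round argument rather than a one-step union bound. Since in the end you delegate both regimes to~\cite{Sch} and~\cite{Maj}, exactly as the paper does, these are flaws of your surrounding narrative rather than of the plan, but they are the points you would have to get right if the citations were to be replaced by an argument.
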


We have very little idea where (or how) the transition from 0 to $1/2$ occurs. 

\begin{prob}
Determine a function $n = n(d)$ (if one exists), such that 
$$0 \; < \;  \liminf_{d \to \infty} p_c([n]^d,d) \; \le \; \limsup_{d \to \infty} p_c([n]^d,d) \; < \; \frac{1}{2}.$$
\end{prob}

There are also much simpler questions to which we have no good answer. For example, the following conjecture was made in~\cite{n^d}.

\begin{conj}\label{rconj}
For $r$ fixed,
$$p_c([2]^d,r) \; = \; \exp\Big( - \Theta\left( d^{1 / 2^{r-1}} \right) \Big).$$
\end{conj}

We know of no non-trivial lower bound on this function when $r \ge 3$.

\subsection{More general models on $[n]^d$}

In~\cite{DCH}, Duminil-Copin and Holroyd introduced the following, much more general family of bootstrap percolation models. We say that $\n \subset \ZZ^d$ is a \emph{neighbourhood} if it is a finite, convex, symmetric set containing the origin $\0$. (Here, symmetric means that if $\x \in \n$ then $-\x \in \n$.) For $r \in \mathbb{N}$, define the bootstrap process on $[n]^d$ with parameters $(r,\n)$ by setting
$$A_{t+1} \; := \; A_t \cup \big\{v \in [n]^d : |(\n+ v) \cap A_t| \ge r \big\}$$
for each $t \in \N$, where $A_0$ is the set of vertices which are infected at time $0$. We define the closure $[A]$ of a set $A$, and the critical probability $p_c([n]^d,r,\n)$ as in the Introduction.

Depending on the shape of $\n$ and the value of $r$, these dynamics behave very differently. We say that they are \emph{critical} if, on the infinite grid $\ZZ^d$, any finite set generates a finite set, and no finite set can be the complement of a stable set (see~\cite{GG1,GG2} or~\cite{DCH} for more details). 

Critical models can be divided into two sub-families: balanced and unbalanced models. Given a set $S$, define $\iota_1(S)$ to be the maximal cardinality of a set of the form $L \cap S$ where $L$ is a line passing through $\0$. A model is \emph{balanced} if there exist two distinct lines $L$ and $L'$ passing through $\0$ such that $L \cap \n$ and $L' \cap \n$ both have cardinality $\iota_1(\n)$. Finally, let $\gamma_1 = r - (|\n| - \iota_1(\n))/2$.

The following theorem shows that, in two dimensions, there is a sharp threshold for $p_c([n]^d,r,\n)$ for all balanced, critical models.

\begin{thm}[Duminil-Copin and Holroyd~\cite{DCH}]\label{DCHthm}
Let $\n \subset \ZZ^2$ be a neighbourhood of $\0$, let $r \in \N$, and suppose that the bootstrap process on $[n]^2$ with parameters $(r,\n)$ is balanced and critical. Then there exists a constant $\Lambda \in (0, \infty)$ such that
$$p_c([n]^2,r,\n) \; = \; \left( \frac{\Lambda + o(1)}{\log n} \right)^{1/\gamma_1}$$
as $n \to \infty$.
\end{thm}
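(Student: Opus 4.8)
The plan is to transcribe the Holroyd‐style argument, in the refined form developed in Sections~\ref{crosssec}--\ref{T2sec} of this paper, into the geometry forced by the neighbourhood $\n$. Throughout, a \emph{droplet} is a translate of a suitably scaled copy of a fixed convex body $K=K(\n)$ — the shape towards which infected regions are driven on $\ZZ^2$; criticality guarantees droplets are the right objects (finite sets stay finite but complements of stable sets cannot be finite), while the balanced hypothesis guarantees that the two slow growth directions $L,L'$ carry the \emph{same} cost $\gamma_1$, so that growth is governed by a genuine two-dimensional variational problem with finite value $\Lambda$.

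\emph{Step 1 (the constant and the analytic input).} Define $\Lambda = \int_0^\infty g(z)\,\dz$, where $g(z)=-\log\beta(1-e^{-z})$ and $\beta$ records the exponential decay rate, in the length of a bounded-width strip transverse to $L$ (or $L'$), of the probability that the strip is crossed by the $(r,\n)$-process when its complement is infected. One checks, exactly as for $\beta_k$ via Lemma~\ref{L6}, that $\beta$ satisfies a quadratic fixed-point identity, so that $g$ is decreasing, convex, continuous, and $g(z)=O(e^{-cz})$ for large $z$; hence $\Lambda<\infty$. The analytic tools of Section~\ref{analsec} — the line integral $W_g$, the pod construction (Proposition~\ref{pod}), the monotonicity Lemma~\ref{Wincr}, and the diagonal-minimisation Proposition~\ref{minW} — use only that $g$ is decreasing and convex, so they apply verbatim with $g$ in place of $g_{\ell+1}$, giving $W_g(\a,\b)\ge 2\int_{\Delta(\a)}^{\Delta(\b)}g(z)\,\dz - (\text{error})$ after the appropriate change of variables tied to $\gamma_1$.

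\emph{Step 2 (crossing estimates).} Prove the analogue of Lemma~\ref{crossR}: for a droplet region $R$ of bounded transverse width and extent $a$ in a balanced direction, the probability it is crossed, with the relevant one-sided boundary infected, is $\beta(u(R))^{(1\pm o(1))a}$. The clean route is exactly that of Section~\ref{crosssec}: chop $R$ into blocks of bounded width, couple each block with an auxiliary bootstrap structure in which the thresholds along one face are lowered (the analogue of $C([\m]\times[\k],1)$), prove a deterministic ``blocker'' lemma (the analogue of Lemma~\ref{detercross}) characterising non-crossing of a block by the existence of a straddling pair of blockers, and combine with Lemma~\ref{L6}; independence of the blocks gives the product bound and avoids Reimer's theorem. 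The only genuinely new work is geometric: for a general convex symmetric $\n$ one must enumerate the possible local obstructions to growth, establish the deterministic characterisation, and verify that \emph{only} the two balanced directions produce strips whose crossing probability decays at the critical rate $\beta$ — transverse directions decay strictly faster and contribute merely to the $o(1)$ error. I expect this to be the main obstacle.

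\emph{Step 3 (lower bound on $p_c$, i.e.\ no percolation below threshold).} Apply the Aizenman--Lebowitz lemma (analogue of Lemma~\ref{k2k}) to reduce to bounding $\Pr_p\big(R\in\<A\cap R\>\big)$ for droplets $R$ whose longest extent lies in $[\tfrac12 B\log n, B\log n]$. Import the hierarchy machinery of Section~\ref{hiersec} unchanged — Lemmas~\ref{hierexists}, \ref{fewhiers}, \ref{basic} only use that spanning a droplet decomposes, via van den Berg--Kesten, into boundedly many disjoint seed and growth events — and bound each growth event $D(R_v,R_u)$ by a product of strip-crossing probabilities from Step 2, i.e.\ by $\exp\!\big(-(1-o(1))\,U_g(R_v,R_u)/p^{\gamma_1}\big)$. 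Feeding this into Propositions~\ref{pod} and~\ref{minW} (with seeds handled by the analogue of Lemma~\ref{seeds} and the long-thin case as in Case~2 of Theorem~\ref{2r2tech}) yields $\Pr_p\big(R\in\<A\cap R\>\big)\le\exp\!\big(-(\Lambda-\eps)/p^{\gamma_1}\big)$, which for $p=(1-\eps)(\Lambda/\log n)^{1/\gamma_1}$ is at most $n^{-1-\delta}$; a union bound over the $O(n^2)$ candidate droplets completes this direction. For the matching upper bound on $p_c$ one argues as is standard and much easier: with high probability a constant-size droplet is fully infected in one of polynomially many disjoint constant-size boxes, and, conditioned on such a seed, the probability it grows through every scale up to size $n$ is bounded below because $-\log$ of the product of the per-scale success probabilities is $(1+o(1))\Lambda/p^{\gamma_1}=(1+o(1))(1+\eps)^{-\gamma_1}\log n<\log n$; once a droplet exceeds size $\gg\log n$ it fills $[n]^2$ almost surely. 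Together these give the stated sharp threshold.
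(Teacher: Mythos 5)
First, note that this paper does not prove Theorem~\ref{DCHthm} at all: it is quoted from the (then unpublished) work of Duminil-Copin and Holroyd~\cite{DCH} in the open-problems section, so there is no internal proof to compare you against. Judged on its own terms, your proposal correctly identifies the strategy that does work (strip-crossing estimates, a line-integral variational constant, hierarchies plus van den Berg--Kesten for the lower bound, a routine upper bound), but it has two genuine gaps. The first is in Step 1, the very definition of $\Lambda$. You posit a single function $\beta$ satisfying a quadratic fixed-point identity ``exactly as for $\beta_k$ via Lemma~\ref{L6}''. For a general convex symmetric $\n$ this is unjustified: the local obstructions to crossing a strip are no longer ``double gaps'', the per-unit-length decay rate is the leading behaviour of a model-dependent transfer-type recursion, and there is no reason it obeys a simple algebraic identity, is convex, or is integrable near $0$ --- each of these must be proved. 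Worse, the balanced hypothesis only equalizes the \emph{exponent} $\gamma_1$ in the two hard directions $L$ and $L'$; it does not make their crossing functions equal. In general one obtains two functions $g_1\neq g_2$, and the constant must be defined as the value of the two-function variational problem $\inf_\gamma\int\big(g_1\,dx_2+g_2\,dx_1\big)$, whose optimizer need not be the diagonal, rather than $\int_0^\infty g(z)\,dz$ for a single $g$. So the constant you propose is not, in general, the right one, and the properties needed to run Propositions~\ref{pod} and~\ref{minW} (monotonicity, convexity/continuity, finiteness of the integral) are asserted rather than established.

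The second gap is the one you yourself flag as ``the main obstacle'' and then leave open, and it is precisely the mathematical content of~\cite{DCH}: the deterministic structure theory for a general neighbourhood. This includes the analogue of Lemma~\ref{detercross} (a complete classification of the local configurations that block growth across a strip), the identification of the correct droplet shapes --- for general $\n$ these are not axis-parallel rectangles but polygonal regions adapted to the stable directions of the model, so the Aizenman--Lebowitz lemma, the span of a union of droplets, perimeter subadditivity, and Lemmas~\ref{hierexists}--\ref{basic} do \emph{not} ``import unchanged'' --- and the verification that no alternative growth mechanism (growth in directions with non-maximal $\iota_1$, oblique growth, corner effects) is asymptotically cheaper than growth via the two balanced directions, which is what makes the lower bound on the cost legitimate. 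Your upper bound sketch is fine in spirit, but it too needs matching \emph{lower} bounds on crossing probabilities for the general droplets, not just the standard seed-and-grow heuristic. In short: the architecture is right and matches what Duminil-Copin and Holroyd actually do, but as written the proposal defines the constant incorrectly in general and defers exactly the steps where the theorem's difficulty lives, so it is a strategy outline rather than a proof.
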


It is a challenging open problem to extend this result to higher dimensions, and to more general neighbourhoods and update rules.

\subsection{Sharper thresholds}

Finally, we note some recent progress, also in two dimensions, on the problem of proving even sharper thresholds for $p_c([n]^d,r)$. This question was first addressed by Gravner and Holroyd~\cite{GH1}, who were interested in explaining the surprising discrepancy between the rigorously proved result of Holroyd~\cite{Hol}, and the estimates of $p_c([n]^2,2)$ from simulations. They improved the upper bound, proving that
$$p_c([n]^2,2) \; \le \; \frac{\pi^2}{18 \log n} \,-\, \frac{c}{(\log n)^{3/2}}$$
for some $c > 0$, and showed also that the function $p_c([n]^2,2) \log n$ converges too slowly for the limit to be easily estimated. In~\cite{GH2}, they conjectured that their upper bound is close to being tight. This conjecture was proved recently by Gravner, Holroyd and Morris~\cite{GHM}.

\begin{thm}[Gravner, Holroyd and Morris~\cite{GHM}]\label{sharper}
There exist constants $C > 0$ and $c > 0$ such that
$$\frac{\pi^2}{18 \log n} \, - \, \frac{C(\log\log n)^3} {(\log n)^{3/2}} \; \le \; p_c([n]^2,2) \; \le \; \frac{\pi^2}{18 \log n} \, - \, \frac{c} {(\log n)^{3/2}}$$
for every $n \in \N$.
\end{thm}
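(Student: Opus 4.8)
The statement has two halves: the upper bound $p_c([n]^2,2)\le \pi^2/(18\log n)-c(\log n)^{-3/2}$ (percolation is typical already at this $p$) and the matching lower bound with the extra $(\log\log n)^3$ factor (percolation is atypical below it). The upper bound is, in essence, the refined droplet construction of Gravner and Holroyd~\cite{GH1,GH2}: one exhibits a mechanism whereby a single ``critical droplet'' — an internally spanned square of side of order $p^{-1}\log(1/p)$ — forms somewhere in $[n]^2$, and then, by the usual argument, fills the whole box with probability $1-o(1)$. The gain over the leading-order estimate is that the probability such a square is internally spanned is not merely $\exp(-(\pi^2/9-o(1))/p)$ but at least $\exp(-\pi^2/(9p)+\Omega(p^{-1/2}))$: the droplet need not grow exactly along the diagonal in $(\mathrm{width},\mathrm{height})$-space, and the freedom to let its boundary (in particular its corners) be ragged on a scale of order $p^{-1/2}$ contributes an entropy of that order to the exponent. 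A union bound over the $n^2$ possible locations then gives percolation once $\pi^2/(9p)-\Omega(p^{-1/2})<2\log n$; translating the $p^{-1/2}$ gain via $p^{-1}\asymp \log n$ (so $p^{-1/2}\asymp (\log n)^{1/2}$ and $p^{-1/2}/\log n\asymp (\log n)^{-3/2}$) yields $p_c\le \pi^2/(18\log n)-c(\log n)^{-3/2}$.

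The lower bound is the harder, genuinely new half, and the plan is to sharpen the $d=r=2$, $\ell=0$ case of the argument of Section~\ref{T2sec} (equivalently Holroyd's argument~\cite{Hol}) so that the $\eps$ appearing in Theorem~\ref{2r2tech} is replaced by an \emph{explicit} error of size $O\big(p^{1/2}(\log(1/p))^3\big)$; concretely, I would prove that for a rectangle $R$ with $\lg(R)\asymp p^{-1}\log(1/p)$,
\[
\Pr_p\big(R\in\<A\cap R\>\big)\;\le\;\exp\!\left(-\frac{\pi^2/9}{p}\;+\;O\!\Big(\frac{(\log(1/p))^3}{\sqrt p}\Big)\right).
\]
Granting this, Lemma~\ref{k2k} (Aizenman--Lebowitz) produces an internally spanned rectangle of the critical length, and a union bound over the at most $n^{2+o(1)}$ candidates shows $\Pr_p(\diam([A])\ge B\log n)\to 0$ whenever $\pi^2/(9p)-O(p^{-1/2}(\log(1/p))^3)>2\log n$ — which, since $\log(1/p)=O(\log\log n)$ in this regime, holds for every $p\le \pi^2/(18\log n)-C(\log\log n)^3(\log n)^{-3/2}$ once $C$ is a large enough constant; as percolation forces $\diam([A])=n$, it then cannot occur with probability $\ge 1/2$.

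To obtain the displayed bound I would retrace the proof of Theorem~\ref{2r2tech}, keeping every error term explicit, and sharpen three inputs. First, a quantitative, two-sided form of Proposition~\ref{minW} for $g=g_1$: the line integral $W_{g_1}(\mathbf a,\mathbf b)$ between near-diagonal points exceeds $2\int_{\Delta(\mathbf a)}^{\Delta(\mathbf b)}g_1(z)\,\dz$ by at most an explicit correction, and, crucially, \emph{any} path whose cost is within $o(p^{-1/2})$ of the minimum must stay within $\ell_\infty$-distance $O(p^{-1/2})$ of the main diagonal — this is the continuum reflection of the fact that the corner entropy exploited in the upper bound is at most of order $p^{-1/2}$, and it is proved by upgrading the convexity/switching argument behind Lemma~\ref{Wswitch} to a stability estimate with explicit constants. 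Second, the ``pod'' estimate of Proposition~\ref{pod}: taking the seed scale $Z\asymp\log(1/p)$ and noting that a good hierarchy has only $O(\log(1/p))$ branching vertices, its error term $d\,p\,g_{\ell+1}(Z)\cdot|\{\text{branch vertices}\}|$ is $O\big(p\,g_1(\log(1/p))\log(1/p)\big)=O(p^{1-o(1)})$, negligible on scale $p^{-1/2}$. Third, the seed and counting bounds: by Lemma~\ref{seeds} the total seed semi-perimeter contributes only $\le\alpha\,\phi(S)=o(p^{-1/2})$ when the branching scale $\hat T$ is chosen suitably, while Lemma~\ref{fewhiers} loses merely a multiplicative $\exp(O(\log(1/p)))$, hence an additive $O(\log(1/p))=O(\log\log n)=o(p^{-1/2})$ in the exponent. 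Summing these three contributions produces exactly the error $O(p^{-1/2}(\log(1/p))^3)$, the cube of $\log(1/p)$ arising from combining the slack in the branching scale, the number of seeds, and the discretisation error of the first item.

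The main obstacle is that first item: a \emph{sharp, two-sided} quantitative variational principle, i.e.\ that the droplet-growth cost is $2\int g_1-\Theta(p^{-1/2})$ and no smaller, together with the confinement of near-optimal growth strategies to an $O(p^{-1/2})$-tube about the diagonal. Unlike in the leading-order analysis one cannot pass to the continuum with an uncontrolled error: the precise local behaviour of $\beta_1$ near its extreme values, and the discreteness of the process, must be tracked throughout, and Lemma~\ref{Wswitch} must become a genuine stability statement. The residual $(\log\log n)^3$ gap between the two bounds is precisely the accumulated bookkeeping cost (number of good hierarchies, number of seeds, slack in $\hat T$); closing it — indeed, even determining the true second-order term of $p_c([n]^2,2)$ — would require a substantially more efficient treatment of the hierarchy decomposition, which I would not attempt here.
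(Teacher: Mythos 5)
First, a point of orientation: this paper does not prove Theorem~\ref{sharper} at all. It appears in the open-problems discussion of Section~\ref{Qsec} as a quoted result — the upper bound on $p_c([n]^2,2)$ is due to Gravner and Holroyd~\cite{GH1}, and the lower bound (the genuinely new half) is the main theorem of Gravner, Holroyd and Morris~\cite{GHM}, a separate paper. So there is no in-paper proof to compare your argument against; the most that can be said is that your outline correctly identifies the provenance of the two halves and sketches, in spirit, the strategy that \cite{GHM} actually follows (a quantified version of Holroyd's hierarchy argument, with explicit error terms in place of the $\eps$ of Theorem~\ref{2r2tech}, combined with a stability form of the variational principle for the growth integral).

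As a proof, however, your proposal has a genuine gap: every step that is hard is asserted rather than carried out. For the upper bound you invoke, but do not prove, the estimate that a critical droplet is internally spanned with probability at least $\exp\big(-\pi^2/(9p)+\Omega(p^{-1/2})\big)$; this ``corner entropy'' construction is the entire content of \cite{GH1}. For the lower bound, the displayed inequality $\Pr_p(R\in\<A\cap R\>)\le\exp\big(-\pi^2/(9p)+O(p^{-1/2}(\log(1/p))^3)\big)$ is exactly the main technical theorem of \cite{GHM}, and your ``first item'' — a two-sided, quantitative variational principle together with confinement of near-optimal growth trajectories to an $O(p^{-1/2})$-tube about the diagonal — is precisely where the difficulty lives. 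Upgrading the convexity switching of Lemma~\ref{Wswitch} and the lower bound of Proposition~\ref{minW} to stability estimates with explicit constants, and sharpening the crossing bounds of Section~\ref{crosssec} so that the corner regions (which Holroyd's argument simply discards, and which are the source of the $p^{-1/2}$ gain in the upper bound) lose only $O(p^{-1/2}(\log(1/p))^3)$ in the exponent, is a substantial piece of work, not a routine bookkeeping exercise; your accounting for where the cube of $\log(1/p)$ comes from is plausible but not derived. In short, the plan is consistent with how the cited authors proceed, but it does not constitute a proof: the two key inputs are exactly the theorems of \cite{GH1} and \cite{GHM} that the statement is quoting.
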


Similarly tight bounds have also been proved for the hypercube when $r = 2$ and when $r = d/2$ (see~\cite{Maj,n^d}). By combining the techniques of this paper with those of ~\cite{GHM}, one might hope that sharper bounds could also be given on $p_c([n]^d,2)$. However, it is likely to be much harder to prove such results when $r \ge 3$.

\end{document}